\newtheorem{theorem}{Theorem}
\newtheorem{meta-thm}[theorem]{Meta-Theorem}
\newtheorem{lemma}[theorem]{Lemma}
\newtheorem{proposition}[theorem]{Proposition}
\newtheorem{remark}[theorem]{Remark}
\newtheorem{definition}[theorem]{Definition}
\newtheorem{assumption}[theorem]{Assumption}
\numberwithin{equation}{section}
\newcommand{\ZZ}{\mathbb{Z}}
\newcommand{\real}{\mathbb{R}}
\newcommand{\torus}{\mathbb{T}}
\def\ep{\epsilon}
\def\F{\mathcal{F}}
\def\Lip{\text{Lip}}
\title[The existence of solutions for nonlinear elliptic equations]
{The existence of solutions for nonlinear elliptic equations: Simple proofs
 and extensions of a paper by Y. Shi }
\author[X.Xu]{Xiaodan Xu}
\address{
	School of Mathematical Sciences, University of Jinan,
	Jinan, 250022, P.R.China.
}
\email{xuxiaodanmath@163.com}
\author[R.de la Llave]{Rafael de la Llave}
\address{School of Mathematics, Georgia Inst. of Technology,
	Atlanta GA, 30332, USA}
\email{rafael.delallave@math.gatech.edu}
\author[F. Wang]{Fenfen Wang}
\address{School of Mathematical Sciences and Laurent Mathematics Center, Sichuan Normal University, Chengdu 610066, P.R.China.}
\email{ffenwang@hotmail.com}
\date{\today}
\thanks{The work of R.L. was supported by NSF grant DMS-1800241.
X.X was supported by CSC by the National Natural Science
Foundation of China (Grant Nos. 11971261; 11571201). X.X. thanks  the
School of Mathematics of Georgia Inst. of Technology for hospitality
during the academic year 2019-2020 and Fall 2020. F.W. was supported by the Fundamental Research Funds of Sichuan Normal University (KY. 20200921).}
\begin{document}
\maketitle
\begin{abstract}

The paper \cite{Shi} uses the Craig-Wayne-Bourgain
method to construct
solutions of an elliptic problem involving parameters.
The results of \cite{Shi} include  regularity assumptions on the perturbation
and involve excluding parameters. The paper \cite{Shi} also
constructs response solutions to a quasi-periodically perturbed (ill-posed
evolution) problem.

In this paper, we use several classical methods (freezing of coefficients, alternative methods for nonlinear elliptic equations)
to extend the results of \cite{Shi}. We weaken the regularity
assumptions on the perturbation and we describe the phenomena that
happens for all parameters.
In the ill-posed problem, we use a recently developed time-dependent
center manifold theorem which allows to reduce the problem
to a finite-dimensional ODE with quasi-periodic dependence on
time.  The bounded and sufficiently small solutions of these ODE
give solutions of the ill-posed PDE.
\end{abstract}

\textbf{Keywords.} Nonlinear elliptic equations; Freezing of coefficients; Alternative method; Center manifold theorem.

\textbf{2010 Mathematics Subject Classification.}
35B65, 
42B30, 
47H10, 
70k75. 

\section{Introduction}
\subsection{Previous results}
The recent paper \cite{Shi}
considers the problem
\begin{equation}
-\Delta u-\mathfrak{m}u+\epsilon f(x,u)=0,\,\,x\in\mathcal{D}:=\mathbb{R}^{d}/ (2\pi\beta_{i}\mathbb{Z})^{d},\,\,d\in\mathbb{Z}_{+},
\label{eq0}
\end{equation}
where
$\mathfrak{m}>0$ (as we will see later, the case $\mathfrak{m} \le 0$ is
easy),
$\epsilon\geq0$
and
$\beta=(\beta_{1},\cdots,\beta_{d})\in[1/2,1]^{d}.$
The unknown function is $u: \mathcal{D}\rightarrow\mathbb{R},$
and data is the nonlinearity
$f: \mathcal{D}\times\mathbb{R}\rightarrow\mathbb{R}$,
which in \cite{Shi} is assumed to be a polynomial in $u$ with  coefficients that
are trigonometric polynomials in $x$.

The paper \cite{Shi} uses the Craig-Wayne-Bourgain (CWB) method
\cite{CraigW93,CraigW94, Bourgain94, Craig00, Bourgain05}
to prove existence of analytic solutions of \eqref{eq0}
when $(\beta_{1}^{-1},\cdots,
\beta_{d}^{-1})\in [1,2]^{d}$ lies in an appropriate set
whose measure is estimated.

The paper \cite{Shi} also considers the formal  ``evolution'' problem
\begin{equation}\label{evolution}
-u_{tt}-\Delta u-\frak{m}u+\epsilon f(t,x,u)=0, \,\,x\in\mathcal{D},
\end{equation}
where $\frak{m}>0, \epsilon\geq0, \beta=(\beta_{1},\beta_{2},\cdots,\beta_{d})\in[1/2,1]^{d}.$
$f: \mathbb{R}\times \mathcal{D}\times\mathbb{R}\rightarrow\mathbb{R}$ is quasi-periodic with respect to time $t$ with frequency vector $\omega\in\mathbb{R}^{b},$ $b\in\mathbb{Z}_{+}.$
The paper \cite{Shi} produces response solutions (i.e., quasi-periodic solutions with the same
frequency as the forcing).

Note that the differential operator in \eqref{evolution}
is also an elliptic operator, so considering it as an
evolution equation leads to an ill-posed problem. Nevertheless,
even if one cannot produce solutions for all
initial conditions, it is possible to obtain interesting solutions.
Indeed, the consideration of elliptic problems in cylindrical
domains as ``evolution'' problems has been considered in
several papers \cite{KirchgassnerS79, Mielke91} and, more recently,
\cite{dlLR09, PolacikV17, PolacikV20,  CdlLR20}.

\subsection{The results in this paper}

In this paper, we revisit and extend the results above using
some classical methods (freezing of coefficients, alternative method)
for the problem \eqref{eq0} or some more modern methods
(reduction to center manifolds for ill-posed equations  \cite{Mielke91,
dlLR09, CdlLR20})
for the problem \eqref{evolution}.

An outline of the main ideas is as follows: as for the treatment of \eqref{eq0} we distinguish
whether
spectrum of $-\Delta -\frak{m}$ is away from zero  (we will
call this cases \emph{non-resonant})  or whether the spectrum of
 $-\Delta -\frak{m}$  contains zero (we call these cases
\emph{resonant}).

\subsubsection{Treatment of \eqref{eq0} when $-\Delta -\frak{m}$ is invertible: freezing of coefficients}
When the spectrum of    $-\Delta -\frak{m}$ is away from zero
\footnote{In general, the spectrum depends on the space one
is considering the operator acting. However, for elliptic operators in bounded
domains, the spectrum is largely independent of the space.  Later we
will specify which spaces we are considering.},
 we  transform
\eqref{eq0} into a fixed point problem in an appropriate Banach
space. This allows us to remove the assumption in \cite{Shi}
that the nonlinearity is polynomial in $x,u$ and it also
allows us to deal with nonlinearities that involve  the
derivatives of $u$ up to order $2$.
That is, we allow nonlinearities $f(x,u, Du(x), D^2u(x))$ and even more general functions denoted by $\F[u]$. This extra generality includes
several interesting cases, that have attracted
attention in recent times such as fractional derivatives,
$(-\Delta)^{\alpha}u $, the Kirkhoff terms
$ ( \int_{\torus^d} |\nabla u|^2 )\Delta u $ or the water wave
terms $(-\Delta)^{1/2} \tanh( (-\Delta)^{1/2}) u $.
We will just require that the functional $\F$
is  Lipschitz  mapping from a space of differentiable
functions to another space of differentiable functions
(with two derivatives less). As it is
well known from classical potential theory, the scale of spaces has to
be carefully chosen so that the gain of regularity obtained
by applying $(-\Delta -\frak{m})^{-1}$ compensates the loss of
regularity incurred by $\F$.

\begin{remark}
For the experts in classical
elliptic regularity theory
\cite{Agmon,AgmonDN59,AgmonDN64} \cite[Chapter 15]{Tay11b}
we anticipate
 that the method is very similar to the classic ``freezing of coefficients''
but that in our case, we do not need to localize the problem, so
that we do not need to use commutator estimates, which makes it
possible to obtain analytic results
for analytic $f$.

The spaces we work with are chosen so that they can be
analytic functions for some values of the parameters and finite-differentiable
functions for other parameter values.
\end{remark}

The results on analytic (and finite-differentiable regularity) depend crucially  on choosing a remarkable
family (indexed by two parameters) of function spaces where
to formulate the functional
analysis  problem.

This family of spaces has been used
in the past, \cite{CallejaCL13, CCCdlL17,WangL20}. These spaces enjoy many remarkable
properties (presented here in Appendix~\ref{sec:properties})
including that they are Banach algebras from some ranges of
the parameters.
In this paper we prove Lemma~\ref{Banachalgebra}, which improves the range of
parameters for the Banach algebra properties established in \cite{CallejaCL13, CCCdlL17,WangL20}.
This immediately leads to improvements in the
range of parameters in the above references.  In the notation
of the above references, the assumption $r > d$  in the above
papers can be
weakened to $r > d/2$ using Lemma~\ref{Banachalgebra} in the present paper.

\subsubsection{Treatment of \eqref{eq0} when $-\Delta - \frak{m} $ is not invertible: bifurcation theory}

When the spectrum of $-\Delta -\frak{m}$ contains zero
(a problem not considered in \cite{Shi}), we note that zero is an eigenvalue of
finite multiplicity so that one can apply the
classical Cesari alternative method
\cite{Cesari} (also called Lyapunov-Schmidt reduction
\cite{Kielhofer12}).  In our case, there are some unusual properties
such as the kernel having large dimension and the presence of symmetries,
so that the calculations involve several algebraic surprises. The
algebraic difficulty increases with the dimension, so we present a complete
example
in  dimesions 1 and 2.

 With some appropriate conditions on the nonlinearity, we can indeed
obtain smooth branches of solutions.
\medskip

Putting together the two results, we can obtain results for all
the choices of $\nu, \frak{m}$ provided  that some explicit
non-degeneracy conditions on the nonlinearity hold.

\subsection{Treatment of \eqref{evolution}: fixed point methods and reduction principles}

As for the equation \eqref{evolution}, it is natural to consider
the equation acting on a space of quasi-periodic functions. The
spectrum of the linearized system always contains semi-lines
 when the
frequencies $\omega$ have dimension $2$ or more.
An elementary result along the lines of the previous result is obtained by
assumming that the spectrum does not contain zero which happens for
some $\beta, \frak{m}, \omega$.

A more sophisticated method to study \eqref{evolution}, which applies
to all $\beta, \frak{m}$ is to observe that we can apply the
time-dependent center manifold theorem introduced in \cite{CdlLR20} to
establish the existence of the time-dependent invariant manifold for the
evolution equation \eqref{evolution}. Using this center manifold,
we can reduce the original problem to a finite-dimensional quasi-periodic ODE problem.  The fixed point methods presented here allows nonlinearities
that loose two derivatives.
The reduction principle of \cite{CdlLR20}  allows nonlinearities
that loose $2-\kappa$ derivatives. In this paper, we will
present a detailed proof of the simpler case when there
is only one derivative.

The study of solutions for  quasi-periodic equation in finite
dimensions    is well developed and
there are a large variety of techniques (which  cannot  be even reviewed) to
produce interesting solutions \cite{Mitropolsky, Melnikov}. These well
studied solutions include indeed response solutions, but also
subharmonic response solutions and many others.

If we find solutions of the problem which remain in a
small enough neighborhood of the origin,
they will become solutions to the original problem
(remember that the center manifold is
locally invariant). Therefore, we can produce solutions of the original
problem, just by producing solutions of a finite-dimensional problem.
Such procedures are often called \emph{reduction principles}.

It is interesting to mention that the reduction to finite dimensions
in \cite{CdlLR20} does not require any assumptions on the perturbing
frequencies.  Of course, the analysis of the resulting
finite-dimensional system using KAM theory may require that the
frequency satisfies some number theoretic properties. Other methods
may have other assumptions, but we will not detail them here.

\subsection{Organization of this paper}

This paper is organized as follows:
In Section \ref{mainidea}, we present the main idea of proving the existence of solutions through the several classical methods we mentioned above.
In Section \ref{space}, we
introduce  a convenient two parameter family
of  function spaces $H^{\rho, r}.$
When $\rho > 0$, the space $H^{\rho, r}$ consists of
analytic functions, but $H^{0,r}$ is the standard Sobolev space.
In Section~
\ref{nonresonant}-\ref{center}, we give our several main results which are the simple proof of the results in \cite{Shi} and the extensions.
Precisely, in Section \ref{nonresonant}, we study the case when the spectrum of the operator $-\Delta-\mathfrak{m}$ of equation \eqref{eq0} is non-resonant.
In Section \ref{resonant}, we introduce the Cesari alternative method to deal with the case that the spectrum of $-\Delta-\mathfrak{m}$ is resonant.
For the ill-posed evolution equation \eqref{evolution}, when the spectrum of $-\partial_{tt}-\Delta-\mathfrak{m}$ is non-resonant, we introduce our results in Section \ref{illposedcase} and for the reosnant case, we introduce the center manifold theorem to solve \eqref{evolution} in Section \ref{center}.

\section{A preview of the results}\label{mainidea}
In this section, we describe formally  the methods we will use,
ignoring for the moment  questions of spaces, domains, etc.
These will be taken care later. The precise definitions will be motivated
by the desire to make the formal manipulations go through.

\subsection{Elliptic theory away from resonances}

We consider the variable $x$ on $\mathbb{T}^{d}:=\mathbb{R}^{d}/(2\pi\mathbb{Z})^{d}.$
To rewrite \eqref{eq0} in a more
convenient way,
\def\LL{{ \mathcal{L}_{\nu,\mathfrak{m} } }}
we denote by $\LL$ the linear operator
\begin{equation}
\LL =\sum_{i=1}^{d}\nu_{i}^{2}\frac{\partial^{2}}{\partial x_{i}^{2}}+\mathfrak{m},
\end{equation}
where $\nu=(\nu_{1},\cdots,\nu_{d})=(\beta_{1}^{-1},\cdots,\beta_{d}^{-1})\in[1,2]^{d}.$ Moreover, we allow that the nonlinearity $f$ also depends on $Du(x), D^{2}u(x).$
For convenience, we  represent the
nonlinearity by  $\F(u)(x):= f(x,u, Du, D^2u)$, then it suffices to verify the abstract
hypothesis for $\F$.

Then, the equation \eqref{eq0} becomes
\begin{equation}
(\LL u)(x)=\epsilon \F(u)(x),\,\, x\in \mathbb{T}^d.
\label{operator}
\end{equation}
We notice that $\LL$ is a diagonal operator in the Fourier basis. Precisely,
\[
\LL(\exp\{\mathrm{i}kx\})=\Upsilon_{k}\exp\{\mathrm{i}kx\}
\]
with $\Upsilon_{k}=\sum_{i=1}^{d}-\nu_{i}^{2}k_{i}^{2}+\mathfrak{m}, k=(k_1,\cdots,k_d)\in\mathbb{Z}^{d}.$

\begin{remark} \label{flipping}
Since  flipping the signs of components of $k$ does not change
the eigenvalue of the operator $\LL$ (but it changes the eigenvector when some of the components
of $k$ are not zero),  then
the eigenvalues have always multiplicity at least $2^{\eta(k)}$
where $\eta(k)$ is the number of components of $k$ which
are not zero. Of course, we have $0 \le \eta(k) \le d$ and, for $\eta(k)$  outside of the coordinate hyperplanes $\eta(k) =
 d$, we need $\nu_i^2$ have some
 rational relations.

If the $\nu_i^2$ have some
rational relations, the multiplicity could be higher\footnote{If an eigenvalue
has multiplicity bigger than $2^{\eta(k)}$, the $\nu_i$ should satisfy
some linear relations. Therefore, except for a set of measure zero of
the $\nu_i$, all the eigenvalues will have multiplicity exactly
$2^{\eta(k)}$. Note that $\eta(k)\leq d,$ so in dimensional $1$ the kernel will have dimension $2$.}
but this happens in a set of measure zero of $\nu$.

For simplicity of the discussion, we will assume that
the $\nu$ we consider have no rational relation. We will furthermore
assume that $\eta(k) = d$.
See  Assumption~\ref{simplicity}.
This assumption could be avoided
with longer explicit calculations.
\end{remark}

In reasonable spaces, for which exponentials
 will be a basis (for example in the spaces presented in Section~\ref{space})
 $\LL$ will be
a self-adjoint operator and  the $\Upsilon_{k}$ will be
its spectrum. Notice that this spectrum is a discrete set going to infinity
and that the eigenvalues have finite multiplicity. (Again, we recall
that, for the present operators, the spectrum is largely independent
of the space we consider it.  We will of course, make the spaces
explicit later since the choice of spaces plays a big role in the
treatment of nonlinear terms).

Since we will use functional analysis,  we will find it convenient to consider that the right hand side of
\eqref{operator} is written as $\F[u]$ and we will think of
$\F$ as a mapping that maps a space of functions
with a certain number of derivatives to another spaces of functions
(which have possibly less derivatives).

 We will prove our
results under abstract assumptions on the operator $\F$. Afterwards will show
that if $\F[u]$ is given by
\begin{equation} \label{concreteF}
\F[u](x) = f(x, u(x), Du(x), D^2 u(x) )
\end{equation}
where $f$ is a sufficiently
smooth function of its finite dimensional arguments, then the abstract hypotheses
for $\F$ are satisfied.

If the parameters $\nu, \frak{m}$ are such that the operator
 $\LL$ is boundedly invertible (we will indicate the
explicit spaces later),
we rewrite \eqref{operator} as
\begin{equation} \label{contraction}
u(x)=\epsilon\LL^{-1} \F(u)(x)\equiv \mathcal{T}(u)(x).
\end{equation}

\subsection{Remarks on spaces}
We see that, to apply the above program, it is useful to formulate
the problem in spaces of functions
that satisfy the following properties (there are links among these properties as we
will see in the concrete examples in Section~\ref{space}).
\begin{itemize}
\item
Consist of analytic functions (or functions with a specified
regularity).
\item
The
norms can be read off from the Fourier coefficients.
\item
It is possible to give estimates of the composition on the right with $f$
under regularity properties in $f$.
\item  It  is
possible to obtain Lipschitz estimates of the operator composing
 with $f$ (such operators are often called ``Nemitski operators'',
``left composition operators'' or ``nonlinear superposition operators''
\cite{AZ90, IKT13}).
\item
These spaces are Banach algebras under
pointwise multiplication.
\item
These spaces are Hilbert spaces (so that we can take advantage of
selfadjointness of some operators and use sharp results in spectral theory).
\item
The operators we consider   that are diagonal with real eigenvalues are
selfadjoint.
\end{itemize}

Some spaces that satisfy these conditions are introduced in
Section~\ref{space}. These spaces are inspired by the Bargman spaces
used in quantum field theory and in complex analysis. They have
already been used in other papers \cite{CallejaCL13,CCCdlL17,WangL20}. We
note  that, in comparison with  the papers above, we present
Lemma~\ref{Banachalgebra} that shows that the good properties of these
spaces are valid for a larger range of parameter values than those
considered in \cite{CCCdlL17,WangL20}. Hence, the results in the
above papers can be extended slightly.

\medskip
\subsection{The alternative method used in the elliptic case}
If the parameters $\nu, \frak{m}$ are such that the operator $\LL$ has zero
eigenvalue, we use the classical
alternative method of
bifurcation theory. In the case that the eigenvalues are
simple, this method was considered in \cite{CrandallR}. In our case,
the eigenvalues have always higher multiplicity, hence, we will follow \cite{Cesari, ChowH, AmbrosettiA, IossJ, ArnoldAI99}.

\def\LLo{{ \mathcal{L}_{\nu,\mathfrak{m}_{0} } }}
For fixed $\frak{m}_{0}>0,$ we denote by
\[
\LL:=\LLo+(\frak{m}-\frak{m}_{0}),
\]
where
\[
\LLo:=\sum_{i=1}^{d}\nu_{i}^{2}\frac{\partial^{2}}{\partial x_{i}^{2}}+\frak{m}_{0}
\]
has zero eigenvalue, and we call $(\frak{m}-\frak{m}_{0})$ the bifurcation parameter.

We realize
that, since $\LLo$ is self-adjoint (again, we will specify the
appropriate spaces later),
its kernel and the closure of its range are orthogonal.
Since its spectrum is discrete, we can define spectral projections
on the kernel and the range of $\LLo$. We call attention that
we only use the operator for $\mathfrak{m} = \mathfrak{m}_0$.

We will denote by $\Pi_K, \Pi_R$ the projections  on the
kernel and the closure of the range, respectively. These projections are
complementary (i.e. $\Pi_K + \Pi_R = \text{Id}$ ) and orthogonal.

Therefore, the equation
\eqref{operator} is equivalent to
the system of equations obtained taking projections
of \eqref{operator}  on the kernel
and on  the range. Introducing, furthermore, the notation
\def\hu{\hat u}
\def\ou{\overline u}
\def\hv{\hat v}
\def\ov{\overline v}
\[
\hu = \Pi_R u, \,\ou = \Pi_K u
\]
(so that $u = \hu + \ou$),
then \eqref{operator} can be rewritten as $:$
\begin{equation} \label{alternative}
\begin{split}
&  (\frak{m}-\frak{m}_{0})\ou=  \epsilon\Pi_K \F(\hu + \ou), \\
& \Pi_R \LLo \hat{u} = -(\frak{m}-\frak{m}_{0})\hu+\epsilon
\Pi_R\F(\hu + \ou) .
\end{split}
\end{equation}

Furthermore, when $ \LLo^R \equiv \Pi_R \LLo \Pi_R$ is boundedly invertible
as an operator on the closure of the range  of $\LLo$,
we have that \eqref{alternative} is equivalent to
\begin{equation} \label{alternative2}
\begin{split}
&  (\frak{m}-\frak{m}_{0})\ou = \epsilon\Pi_K \F(\hu + \ou), \\
& \hu  =  (\LLo^R)^{-1}(-(\frak{m}-\frak{m}_{0})\hu+\epsilon \Pi_R\F(\hu + \ou))
\end{split}
\end{equation}
The system \eqref{alternative2} is a system for the unknowns $\hu, \ou$.

The first equation in \eqref{alternative2} is often called
the \emph{``bifurcation equation''} and  the second one is  called  the \emph{``range equation''}.

The classical method, which we will follow,  to analyze
\eqref{alternative2} is to, for a given $\ou$, find a $\hu(\ou, \epsilon)$
that solves the range equation. This will be an easy application of
the contraction mapping. Once we have obtained such $\hu(\ou, \epsilon)$,
the bifurcation equation becomes an equation for $\ou$ alone,
namely
\begin{equation}\label{ou}
 (\frak{m}-\frak{m}_{0})\ou=\epsilon \Pi_K \F( \hu(\ou, \epsilon)  + \ou ).
\end{equation}

Since $\ou$ is a finite-dimensional variable, the
equation \eqref{ou} is a finite-dimensional equation, which can be
analyzed using the methods of singularity theory.

The interesting cases are when the nonlinearity is at least quadratic
in the known. The linear terms can be absorbed in the linear
part.

This equation \eqref{ou} will,  under some explicitly non-degeneracy conditions
which depend only on the derivatives w.r.t. $\epsilon$ of the left
hand side of \eqref{ou},
have several branches of solutions and require
somewhat
complicated non-linear analysis, but it is a finite-dimensional problem.
The study of the branches etc. involves some assumptions on the nonlinearity
$f$. To analyze the bifurcation equation, there
are several methods in the literature.

a) Using the jets of the equation to apply a degenerate implicit function theorem.

b) Using some fixed point theorem based on index theory.

These  methods, of course require some non-degeneracy assumptions but give
 very precise information on the detailed nonlinearities, which are
affected by the symmetry etc. We refer to the references above for
the rich mathematical results and applications of singularity theory and bifurcation theory.

In this paper, we will just discuss a very simple explicit nonlinearity
and  show that putting together bifurcation theory and
the fixed point theory,
for all small enough $\epsilon$, we can analyze all
the possible ranges of the parameters $\nu, \frak{m}$ in  \eqref{eq0}.
In this example, the bifurcation theory gives an explanation why
the fixed point method breaks down. Indeed, when the parameters of the problem
are close to the resonant values, there are several small solutions.

\bigskip
\subsection{The ill-posed evolution problem under nonresonance}\label{illnonresonance} For the evolution equation \eqref{evolution},
we are interested in finding quasi-periodic solutions of the form
$u(t,x)=U(\omega t,x)$ with frequency $\omega\in \mathbb{R}^{b}$,
where $U: \mathbb{T}^{b}\times \mathbb{T}^{d}\rightarrow\mathbb{R}$
is the hull function of the solution  $u.$

We will present two different ways of analyzing the equation
\eqref{evolution}. A fixed point analysis and  method
based on reduction to time-dependent center manifolds
\footnote{The fixed point analysis will allow nonlinearities
that loose 2 derivatives, whereas the reduction to center
manifolds allows to loss of $(2 - \kappa)$ derivatives. }.

\def\Q{{ \mathcal{Q}_{\omega,\nu,\mathfrak{m} } }}
We denote by $\Q$ the linear operator
\begin{equation}
\mathcal{Q}_{\omega,\nu,\frak{m}}:=(\omega\cdot\partial_{\theta})^{2}+\sum_{i=1}^{d}\nu_{i}^{2}\frac{\partial^{2}}{\partial x_{i}^{2}}+\frak{m},
\end{equation}
where $\nu=(\nu_{1},\nu_{2},\cdots,\nu_{d})\in[1,2]^{d}.$

\subsubsection{A fixed point analysis}\label{illfixed}
In the fixed point method, we allow that the nonlinearity loose 2 derivatives with the following form:
\[
\mathcal{N}(U)(\theta,x):=f(\theta,x,U(\theta,x),D_xU(\theta,x),D^2_xU(\theta,x)),
\]
then, \eqref{evolution} becomes
\begin{equation}\label{evolution1}
\Q U=\epsilon \mathcal{N}(U).
\end{equation}
We
notice that $\Q$ is a diagonal operator in the Fourier basis, i.e.
\[
\mathcal{Q}_{\omega,\nu,\frak{m}}\{\exp\{\mathrm{i}(l\theta+kx)\}\}
=\Upsilon_{l,k}\{\exp\{\mathrm{i}(l\theta+kx)\}\}
\]
with
\begin{equation}\label{newupsilon}
\Upsilon_{l,k}:=-\langle \omega, l\rangle^{2}-\sum_{i=1}^{d}\nu_{i}^{2}k_{i}^{2}+\frak{m}.
\end{equation}
The problem with the analysis of the multipliers \eqref{newupsilon}
is that when $b$, the dimension of the frequencies, is bigger than
$1$, the set
$\{ \langle \omega, l \rangle \}_{l \in \ZZ^b} $ is dense on the reals.
Hence, $ \{ \langle \omega,   l \rangle^2 \}_{l \in \ZZ^b}$ is dense on
$\real_+$.

\medskip
The hypotheses of the fixed point approach are as follows:

Suppose that the parameters $\omega,\nu,\frak{m}$ meet one of the following hypotheses:

(H1) The value of $-\sum_{i=1}^{d}\nu_{i}^{2}k_{i}^{2}+\frak{m}$ is negative;

(H2) The value of $-\sum_{i=1}^{d}\nu_{i}^{2}k_{i}^{2}+\frak{m}$ is positive and $\omega$ is 1-dimensional $\&$ $\omega\in [1,2]\subset \mathbb{R}^{1}.$

In these two
cases, we can still use the freezing of coefficient method used for the elliptic case away from resonances to obtain the
solutions. Otherwise,  the  freezing of coefficient  method fails to solve
\eqref{evolution} and we have to resort to the method
described in the next Section.

\subsubsection{Time-dependent center manifolds}
A method of wider applicability (and which produces
solutions more general than response solutions)
is to apply  a  time-dependent  center manifold theorem.

We will allow that the nonlinearity $f$ depends on $D_{x}U$. More
generally that the nonlinearity is given by
a functional which looses $(2-\kappa)$ derivatives.

The recent paper \cite{CdlLR20} develops a time-dependent center manifold
theory that applies to ill-posed equations. More precisely the methods of \cite{CdlLR20}
requires that  $\mathcal{N}(U)$ is  several times differentiable
from a space of functions having $r$ derivatives to a space of functions
having $(r-2+\kappa)$ (for some $\kappa>0$) derivatives
\footnote{We do not know whether the requirement  of $\kappa > 0 $
is really needed of it is a limitation of the method.}.

  Note that, when the freezing of coefficient method applies,
we do not need to include the $\kappa$, and we could
obtain results for  nonlinearities that loose $2$ derivatives.
Furthermore, the results on the elliptic case, require only that the
nonlinearity is Lipschitz, but for the center manifold, we will need
that the nonlinearity is several times differentiable.

Since the results for nonlinearities that loose $(2 - \kappa)$ derivatives
can be obtained just directly from \cite{CdlLR20}, in this paper
we will present only the results for nonlinearities that loose $1$ derivative
and present full details in  this case. As we will see, dealing with
the case that the nonlinearity looses one derivative, is simpler
than the case discussed in \cite{CdlLR20}. We hope that the present
simple proof can be pedagogically motivating for these areas of results.
Of course, in the classical problems in which the losses of
derivatives are caused by applying differentials, the loss of derivatives
are integers and, loosing one derivative is the best that one can do
in this classical case.

We will show that the results of \cite{CdlLR20} apply to
\eqref{evolution}. Then, we conclude  that even if \eqref{evolution}
is ill-posed, there is a \emph{finite-dimensional}
manifold evolving quasiperiodically which is invariant under
\eqref{evolution}.

Once this center manifold
is established, one can use finite-dimensional methods to obtain a
varieties of solutions: response subharmonics, (un)stable manifolds,
etc. by a finite computation.

More precisely:  By using a quasi-periodic
parameterization of the center manifold, we are reduced to studying
a finite-dimensional non-autonomous differential equation.
We will provide the first terms in the expansion these
manifold and  recall that there are many results in the literature
of finite-dimensional system which allow to conclude that, if
the perturbations of the system satisfy some concrete non-degeneracy
assumptions, then the perturbed system admits interesting
orbits.

For example, using KAM theory, one can get response solutions
or solutions with external and inner frequencies.  If these periodic
solutions have positive Lyapunov exponents (which can be computed
perturbatively), one can produce stable manifolds, using Melnikov theory,
one can get subharmonic quasi-periodic orbits (and possibly their
stable/unstable manifolds). There are many such results in the finite-dimensional theory that give precise conditions for the persistence of
orbits of some kind if the perturbations satisfy different conditions.

We will not present these finite-dimensional results in detail since they are well established \cite{Mitropolsky,Melnikov}
and their methodology is rather different from the main thrust of this
paper and its main use is applications to concrete models.

\begin{remark} \label{encouragement}
The paper \cite{CdlLR19} considers ill-posed time-independent
manifolds and shows that there are infinite-dimensional
manifolds of solutions that converge to them.

It seems likely that one can adapt the proofs of
existence of stable manifolds  for autonomous
models to the non-autonomous cases considered here. If
such adaptation was possible,
 besides the finite-dimensional families  solutions produced here,
one would get infinite-dimensional families asymptotic
to them in the future (or in the past).

Of course, adapting the results of \cite{CdlLR19} to
the time-dependent case in \cite{CdlLR20}
would have several other applications.
\end{remark}

\section{Function spaces}\label{space}

\begin{definition}
Given $\rho>0,$ we introduce the complex torus $\mathbb{T}_{\rho}^{d}:$
\[
\mathbb{T}_{\rho}^{d}:=
\{
x\in\mathbb{C}^{d}/(2\pi\mathbb{Z})^{d}: \mathrm{Re}(x_{j})\in \mathbb{T},\,\, |\mathrm{Im} x_{j}|\leq \rho,\,\,j=1,\cdots,d
\}.
\]
\end{definition}

Note that $\torus^d_\rho$ can be considered as a $2d$ real manifold with boundary.

For a  function $u: \mathbb{T}_{\rho}^{d}\rightarrow\mathbb{C},$ we denote its Fourier expansion:
\[
u(x)=\sum_{k\in\mathbb{Z}^{d}}\hat{u}_{k}e^{\mathrm{i}k\cdot x},
\]
where $k\cdot x=\sum_{i=1}^{d}k_{i}x_{i}$ and  $\hat{u}_{k}$ are the Fourier coefficients of $u.$

If $u$ is analytic and bounded on $\mathbb{T}^{d}_{\rho}$, then the Fourier coefficients satisfy the Cauchy bounds:
\[
  |\hat{u}_{k}|\leq
  e^{-|k|\rho} \max_{x\in\mathbb{T}_{\rho}^{d}}|u(x)|
\]
with $|k|=\sum_{i=1}^{d}|k_{i}|$.

\bigskip
The spaces we will work with are:

\begin{definition}
For $\rho\ge 0, r\in \mathbb{Z}_{+}$, we denote by $H^{\rho,r}$

\begin{equation*}
\begin{split}
H^{\rho,r}:&=H^{\rho,r}(\mathbb{T}^{d}_{\rho})\\
&=\left\{u: \mathbb{T}_{\rho}^{d}\rightarrow\mathbb{C} \,\,\big|\,\, \|u\|_{\rho,r}^{2}=\sum_{k\in\mathbb{Z}^{d}}|\hat{u}_{k}|^{2}e^{2|k|\rho}(1+|k|^{2})^{r}<+\infty\right\}.
\end{split}
\end{equation*}

\end{definition}
Note that $(H^{\rho,r},\|\cdot\|_{\rho,r})$ is a Hilbert space.

\begin{remark}
When $\rho=0,$
$H^{r}(\mathbb{T}^{d}):=H^{0,r}(\mathbb{T}^{d})$ is the standard
Sobolev space. According to the Sobolev embedding theorem, we have
that the space $H^{r+\lambda}(\mathbb{T}^{d})$ $(\lambda=1,2,\cdots)$
is continuously embedded into $C^{\lambda}(\mathbb{T}^{d})$ for
$r>d/2$ (see \cite{Tay11b}).

When $\rho>0,$ the space $H^{\rho,r}$ is a closed space of standard
Sobolev space $H^{r}(\mathbb{T}_{\rho}^{d}),$ which consists of
complex analytic functions.
\end{remark}

The spaces $H^{\rho, r}$ enjoy many remarkable properties.
We have collected the  ones we will use in Appendix~\ref{sec:properties}.
The most important ones are the properties of the operator given
by composition in the left. See Lemma~\ref{compos1}.  This will
justify that the  operator $\F(u)= f(x,u, Du, D^2u)$  satisfies
the abstract properties when the function $f$ is analytic (or
sufficiently differentiable).

\smallskip
The following result is a straightforward consequence
of the fact that the norms in the $H^{\rho,r}$ spaces are
weighted sums of the Fourier coefficients.

\begin{proposition}
\label{operatornorm}
(\cite{CCCdlL17})If we have a linear operator $\mathcal{A}$ which is diagonal in the Fourier basis,
\[
\mathcal{A}\exp\{\mathrm{i}kx\}
=\Upsilon_{k}\exp\{\mathrm{i}kx\},
\]
for suitable coefficients $\Upsilon_{k},$

\[
\|\mathcal{A}\|_{H^{\rho,r}\rightarrow H^{\rho,r}}\leq  \sup_{k}|\Upsilon_{k}|.
\]

More generally, if
\[
  |\Upsilon_{k}| \le C(1 +|k|^2)^{-\lambda/2},
\]
then,
\[
\|\mathcal{A}\|_{H^{\rho,r}\rightarrow H^{\rho,r+\lambda}}\leq
\sup_{k}|\Upsilon_{k}|(1 + |k|^2)^{\lambda/2}.
\]
\end{proposition}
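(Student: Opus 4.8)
The statement to prove is Proposition~\ref{operatornorm}, which characterizes the operator norm of a Fourier-diagonal operator on the $H^{\rho,r}$ spaces.

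\medskip

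The plan is to compute directly from the definition of the $H^{\rho,r}$ norm, exploiting the fact that the norm is a weighted $\ell^2$ sum over Fourier coefficients and that a Fourier-diagonal operator acts coefficient-by-coefficient. First I would note that if $u = \sum_{k} \hat u_k e^{\mathrm{i}kx}$, then by linearity and the diagonal action, $\mathcal{A}u = \sum_k \Upsilon_k \hat u_k e^{\mathrm{i}kx}$, so the $k$-th Fourier coefficient of $\mathcal{A}u$ is simply $\Upsilon_k \hat u_k$. Then for the first claim, I would write
\[
\|\mathcal{A}u\|_{\rho,r}^2 = \sum_{k\in\mathbb{Z}^d} |\Upsilon_k|^2 |\hat u_k|^2 e^{2|k|\rho}(1+|k|^2)^r \le \Big(\sup_k |\Upsilon_k|^2\Big) \sum_{k\in\mathbb{Z}^d} |\hat u_k|^2 e^{2|k|\rho}(1+|k|^2)^r = \Big(\sup_k|\Upsilon_k|\Big)^2 \|u\|_{\rho,r}^2,
\]
and take square roots, which gives $\|\mathcal{A}\|_{H^{\rho,r}\to H^{\rho,r}} \le \sup_k|\Upsilon_k|$.

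\medskip

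For the second, more general claim, the key observation is that the target space $H^{\rho,r+\lambda}$ has the same exponential weight $e^{2|k|\rho}$ but an extra polynomial factor $(1+|k|^2)^\lambda$ in its norm. So I would estimate
\[
\|\mathcal{A}u\|_{\rho,r+\lambda}^2 = \sum_k |\Upsilon_k|^2 (1+|k|^2)^\lambda |\hat u_k|^2 e^{2|k|\rho}(1+|k|^2)^r \le \Big(\sup_k |\Upsilon_k|^2 (1+|k|^2)^\lambda\Big)\|u\|_{\rho,r}^2,
\]
using the hypothesis $|\Upsilon_k| \le C(1+|k|^2)^{-\lambda/2}$ only to guarantee that the supremum $\sup_k |\Upsilon_k|(1+|k|^2)^{\lambda/2}$ is finite (bounded by $C$); the stated bound on the operator norm then follows by taking square roots. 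This is essentially a one-line Parseval-type computation, so there is no real obstacle — the only thing to be slightly careful about is making sure the supremum that appears is genuinely finite, which is exactly what the decay hypothesis $|\Upsilon_k| \le C(1+|k|^2)^{-\lambda/2}$ supplies, and noting that $e^{2|k|\rho}$ cancels identically between the two norms so the value of $\rho \ge 0$ plays no role. I would also remark that both bounds are in fact sharp, attained (or approached) by testing $\mathcal{A}$ on a single exponential $e^{\mathrm{i}k_0 x}$ for $k_0$ achieving (or nearly achieving) the supremum, though this is not needed for the statement as given.
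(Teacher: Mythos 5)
Your proof is correct and is exactly the argument the paper has in mind: the paper states this proposition as a straightforward consequence of the $H^{\rho,r}$ norms being weighted $\ell^2$ sums of Fourier coefficients, which is precisely your coefficient-by-coefficient Parseval computation. Nothing is missing, and your remark on sharpness via testing on a single exponential is a harmless bonus.
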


\section{Nonresonant case}\label{nonresonant}

In this section, we give a simple proof of the main result in \cite{Shi}
but we weaken the assumption that the nonlinearity $f(x,u)$ is
trigonometric polynomial. We allow that the nonlinearity is
$\F(u)(x)=f(x,u,Du, D^2u)$ with $f$ analytic or finitely differentiable.

We now present our result for the model \eqref{operator} in elliptic case far
from resonances.

\begin{theorem}\label{analyticThm}

For fixed $\frak{m}>0,$ any $0<\delta\ll1$, given $\rho \ge 0, r-2>d/2,$ let
$B_{s}(0)\subset H^{\rho,r}$ be closed ball around the origin with the radius $s>0$.

Assume that $\F$  is Lipschitz from  $B_s(0)$ into $H^{\rho, r-2}$.

There exist $\epsilon_{*}>0$ depending on $\nu, \frak{m}, \delta, s,
\Lip(\F)$ and a set
$I\subset [1,2]^{d}$ with Lebesgue measure
$\mathrm{mes}(I)=O(\delta),$ such that when $0<\epsilon<\epsilon_{*}$, for any $\nu\in
[1,2]^{d}\backslash I,$ the equation \eqref{operator} admits a unique solution $u(x)\in B_{s}(0).$
\end{theorem}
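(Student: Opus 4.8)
The plan is to realize the solution as a fixed point of the operator $\mathcal{T}(u) = \epsilon \LL^{-1}\F(u)$ on the closed ball $B_s(0) \subset H^{\rho,r}$, using the contraction mapping principle. The whole argument rests on two ingredients: a quantitative lower bound on $|\Upsilon_k|$ valid for $\nu$ outside a small set $I$, and the smoothing estimate for $\LL^{-1}$ from Proposition~\ref{operatornorm}. First I would address the small-divisor issue. The eigenvalues are $\Upsilon_k = \mathfrak{m} - \sum_i \nu_i^2 k_i^2$. Since $\mathfrak{m} > 0$, the problematic $k$ are those with $\sum_i \nu_i^2 k_i^2$ near $\mathfrak{m}$, i.e. $|k|$ bounded by a constant $R = R(\mathfrak{m})$ (because for $\nu \in [1,2]^d$ and $|k|$ large, $\sum \nu_i^2 k_i^2 \ge |k|^2 \gg \mathfrak{m}$, so $|\Upsilon_k| \ge \tfrac12 |k|^2$ automatically). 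Thus only finitely many $k$ (with $|k| \le R$) can cause trouble. For each such $k \neq 0$, the set $\{\nu \in [1,2]^d : |\mathfrak{m} - \sum_i \nu_i^2 k_i^2| < \delta'\}$ is a thin neighborhood of a hypersurface, of measure $O(\delta'/|k|)$; removing the union over all $|k| \le R$ gives a set $I$ of measure $O(\delta)$ upon choosing $\delta'$ proportional to $\delta$. (I would also note $\Upsilon_0 = \mathfrak{m} > 0$ needs no exclusion.) On the complement, $|\Upsilon_k| \ge c(\mathfrak{m},\delta)(1+|k|^2)$ for a suitable constant — for small $|k|$ this is the excised lower bound $\delta'$ rescaled, for large $|k|$ it is the automatic quadratic growth — so by Proposition~\ref{operatornorm} (with $\lambda = 2$), $\LL^{-1}$ maps $H^{\rho,r-2}$ boundedly into $H^{\rho,r}$ with $\|\LL^{-1}\|_{H^{\rho,r-2}\to H^{\rho,r}} \le C/c(\mathfrak{m},\delta) =: M$.

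Next I would verify the contraction. For $u \in B_s(0)$, $\F(u) \in H^{\rho,r-2}$ with $\|\F(u)\|_{\rho,r-2} \le \|\F(0)\|_{\rho,r-2} + \Lip(\F)\,s =: A$, hence $\|\mathcal{T}(u)\|_{\rho,r} \le \epsilon M A$. Choosing $\epsilon_* $ small enough that $\epsilon M A \le s$ for $\epsilon < \epsilon_*$ ensures $\mathcal{T}$ maps $B_s(0)$ into itself. For $u, v \in B_s(0)$, $\|\mathcal{T}(u) - \mathcal{T}(v)\|_{\rho,r} \le \epsilon M\,\Lip(\F)\,\|u-v\|_{\rho,r}$, so shrinking $\epsilon_*$ further so that $\epsilon M \Lip(\F) \le 1/2$ makes $\mathcal{T}$ a contraction. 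Here $\epsilon_*$ depends only on $\nu$ through $M = M(\mathfrak{m},\delta)$ (uniform over $\nu \notin I$ if one wants, or via the $\nu$-dependence of $c$), on $\mathfrak{m}$, $\delta$, $s$, and $\Lip(\F)$, as claimed. The Banach fixed point theorem then yields a unique $u \in B_s(0)$ with $u = \mathcal{T}(u)$, i.e. $\LL u = \epsilon \F(u)$, which is \eqref{operator}. Uniqueness within the ball is automatic from the contraction.

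One point requiring a little care is the interplay with the parameter $\rho$: the space $H^{\rho,r}$ for $\rho > 0$ consists of functions analytic on $\torus^d_\rho$, and I must check that $\LL^{-1}$, being Fourier-diagonal, preserves this analyticity class — but this is immediate since the multiplier bound in Proposition~\ref{operatornorm} is stated uniformly for the $H^{\rho,r}$ scale, so no separate argument is needed. The hypothesis $r - 2 > d/2$ is what makes $H^{\rho,r-2}$ (and a fortiori $H^{\rho,r}$) a space on which the concrete Nemitski operator $\F(u)(x) = f(x,u,Du,D^2u)$ is well-defined and Lipschitz on bounded balls — but since the theorem assumes $\F$ Lipschitz abstractly, this condition is only used downstream when verifying the hypothesis for concrete $f$ (via Lemma~\ref{compos1} and the Banach algebra property), not in the present proof.

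I expect the main obstacle — really the only substantive step — to be the measure estimate: showing that excluding the bad $\nu$ costs only $O(\delta)$ while simultaneously securing a lower bound on $|\Upsilon_k|$ that grows like $(1+|k|^2)$, so that the two derivatives gained by $\LL^{-1}$ exactly compensate the two derivatives lost by $\F$. The key observation that makes this painless here (in contrast to the CWB machinery of \cite{Shi}) is that $\mathfrak{m}$ being a fixed positive number confines all resonances to a fixed finite set of frequencies $k$, so no genuine small-divisor iteration is needed — a single excision suffices.
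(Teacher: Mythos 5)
Your proposal is correct and follows essentially the same route as the paper: Lemma~\ref{mesI} for the measure excision, the lower bound $|\Upsilon_k| \gtrsim (1+|k|^2)$ giving the two-derivative gain in Proposition~\ref{estimatesLLm1}, and then the contraction mapping principle on $B_s(0)$ with $\epsilon_*$ chosen to make $\mathcal{T}$ both a self-map and a $\tfrac12$-contraction. The only (welcome) refinement is your explicit observation that resonances are confined to $|k| \le R(\mathfrak{m})$, which makes the finiteness of the excision set transparent where the paper leaves it implicit.
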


\begin{remark}\label{solutionsanalytic}
When $\rho >0$, the solutions produced by
Theorem~\ref{analyticThm} will be analytic as functions of
their arguments. When $\rho = 0$, the
solutions will be in the classical Sobolev space and finitely differentiable.
\end{remark}

\begin{remark}\label{parameters}
Since we are just applying the contraction mapping
principle if $\F$ is differentiable (or analytic with respect to
parameters), the solutions produced by Theorem~\ref{analyticThm} will
depend differentiably (or analytic) in parameters.  The analyticity with respect
to parameters is very natural when we consider the nonlinearities of
the form~\eqref{concreteF}.
\end{remark}

We will show in Lemma~\ref{compos1}   that if
$f$ is analytic in a small ball of its arguments, $\F$ is indeed Lipschitz (analytic)
on $B_s(0)$ of the spaces $H^{\rho, r}$ for any $\rho \ge 0$, $r -2 > d/2$.
If $f$ depends analytically on the parameters,
then the function $\F$ is also analytic in the sense of
analytic functions from one Banach space to another. See \cite[Chapter III]{HilleP57}
for more details on the theory of analytic functions from a Banach space
to another.

We also show in Lemma~\ref{compos1}
that if $f$ is $C^{r+1}$, the function $\F$  is Lipschitz from
$H^{r}$ to $H^{r-2}$ for $r-2 > d/2$.

To prove Theorem
\ref{analyticThm}, we first prove that the operator
$\LL$ is boundedly invertible from $H^{\rho, r-2} $ to $H^{\rho,r}$. These are,
of course, standard elliptic estimates that show that inverting the
operator \emph{gains two derivatives} \cite{Agmon,Tay11b}.

We note that if $\rho>0$ or $\rho=0,$ $r>d/2+2,$ the solutions produced here satisfy the equation \eqref{eq0} in the classical sense.

\subsection{Estimates on the inverse operator $\LL^{-1}$}
\label{I}

First, we give the measure of the parameter set in $\nu$ space which produces resonance term.

\begin{lemma}\label{mesI}
For sufficiently small $\delta>0,$ $d\in\mathbb{Z}_{+},$ fixed $\mathfrak{m}>0,$
we define the following parameter set of $\nu$:
\[
I=\left\{
\nu\in[1,2]^{d}\,\,|\,\,\exists\,\, k\in \mathbb{Z}^{d},\,\,such\,\, that\,\,
 \left|-\sum_{i=1}^{d}k_{i}^{2}\nu_{i}^{2}+\mathfrak{m}\right|\leq\delta
\right\}.
\]
Then, we have $\mathrm{mes}(I)=O(\delta),$
where $O(\delta)$ is the same order of $\delta.$
\end{lemma}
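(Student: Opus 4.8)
The plan is to estimate the measure of $I$ by a union bound over $k \in \mathbb{Z}^d$, showing that each bad set $I_k = \{\nu \in [1,2]^d : |-\sum_i k_i^2 \nu_i^2 + \mathfrak{m}| \le \delta\}$ has measure controlled by $\delta$ times a factor summable in $k$. First I would dispose of the finitely many $k$ for which $I_k$ could have large measure: since $\nu \in [1,2]^d$, we have $\sum_i k_i^2 \nu_i^2 \ge |k|_\infty^2$ (writing $|k|_\infty = \max_i |k_i|$), so once $|k|_\infty^2 > \mathfrak{m} + \delta$, say for all but finitely many $k$ with $|k|_\infty \le \sqrt{\mathfrak{m}+1}$, the relevant quantity is already bounded below by a positive constant and, more to the point, the gradient of $g_k(\nu) := -\sum_i k_i^2 \nu_i^2 + \mathfrak{m}$ in $\nu$ is large. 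For these finitely many small $k$, $I_k$ is handled trivially: either it is empty, or it is contained in $[1,2]^d$ and contributes a total measure $O(\delta)$ (in fact $O(\delta)$ with a constant depending only on $\mathfrak{m}$ and $d$), since along any coordinate direction $\nu_j$ with $k_j \ne 0$ the function $g_k$ has derivative $|\partial_{\nu_j} g_k| = 2 k_j^2 \nu_j \ge 2$, so the slab where $|g_k| \le \delta$ has thickness $O(\delta)$ in that direction and hence measure $O(\delta)$.

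For the generic $k$ (those with $|k|_\infty \ge \sqrt{\mathfrak{m}+1}$), I would pick an index $j$ with $k_j^2 = |k|_\infty^2 = \max_i k_i^2$ and integrate out the variable $\nu_j$ first, fixing the other $d-1$ coordinates. For fixed $(\nu_i)_{i \ne j}$, the map $\nu_j \mapsto g_k(\nu)$ has derivative $|\partial_{\nu_j} g_k| = 2 k_j^2 \nu_j \ge 2 k_j^2 = 2|k|_\infty^2$, so the one-dimensional set $\{\nu_j \in [1,2] : |g_k(\nu)| \le \delta\}$ has Lebesgue measure at most $\delta / |k|_\infty^2$. Integrating over the remaining $d-1$ coordinates in $[1,2]^{d-1}$ gives $\mathrm{mes}(I_k) \le \delta / |k|_\infty^2$. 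Summing, $\mathrm{mes}(I) \le \mathrm{mes}\big(\bigcup_{|k|_\infty \le \sqrt{\mathfrak m +1}} I_k\big) + \delta \sum_{|k|_\infty > \sqrt{\mathfrak m+1}} |k|_\infty^{-2}$, and the series $\sum_{k \in \mathbb{Z}^d} |k|_\infty^{-2}$ — wait, this does \emph{not} converge for $d \ge 2$, since there are $\sim n^{d-1}$ lattice points with $|k|_\infty = n$.

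So the genuine issue, and the step I expect to be the main obstacle, is making the union bound actually summable. The fix is to not throw away all the decay: for a generic $k$ I should use \emph{all} the coordinates $j$ with $k_j \ne 0$, or more cleverly note that $|\partial_{\nu_j} g_k| \ge 2 k_j^2$ for \emph{every} such $j$, so I may choose the most favorable direction. A cleaner route: restrict to the slab in the direction $j$ achieving $k_j^2 = |k|_\infty^2$ to get thickness $\le \delta/|k|_\infty^2$, but also observe that $I_k$ is empty unless $\mathfrak{m} - \delta \le \sum_i k_i^2 \nu_i^2 \le \mathfrak{m} + \delta$; since $\sum_i k_i^2 \nu_i^2 \ge |k|_\infty^2$, we need $|k|_\infty^2 \le \mathfrak{m} + \delta$, i.e. $|k|_\infty \le \sqrt{\mathfrak{m}+1}$ for $\delta$ small. \emph{This means there are only finitely many nonempty $I_k$ at all}, with the number bounded in terms of $\mathfrak{m}$ and $d$ alone. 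Hence the union bound is a \emph{finite} sum: $\mathrm{mes}(I) \le \sum_{|k|_\infty \le \sqrt{\mathfrak m+1}} \mathrm{mes}(I_k) \le N(\mathfrak{m},d)\cdot C\delta = O(\delta)$, where I used the one-dimensional slab estimate above for each of the finitely many terms. The only case needing a separate glance is $k = 0$, where $g_0 \equiv \mathfrak{m} > 0$, so $I_0 = \emptyset$ for $\delta < \mathfrak{m}$. This finite-sum observation is what makes the lemma work uniformly in dimension, and I would present it as the crux of the argument.
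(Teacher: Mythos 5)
Your proof is correct and follows essentially the same route as the paper: only finitely many $k$ can make $I_k$ nonempty (because $\sum_i k_i^2\nu_i^2 \ge |k|_\infty^2$ forces $|k|_\infty^2 \le \mathfrak{m}+\delta$), and for each such $k$ a gradient/transversality lower bound $|\partial_{\nu_j} g_k|\ge 2$ gives a slab of thickness $O(\delta)$, so the finite union has measure $O(\delta)$. If anything, your write-up makes the finiteness of the relevant index set more explicit than the paper's definition of the set $K$, which is helpful; the detour through the (failed) infinite union bound could be trimmed, but the final argument matches the paper's.
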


\begin{proof}
When $k=0,$ $\mathrm{mes}(I)=0.$
When $k\in\mathbb{Z}^{d}\backslash \{0\},$ for fixed $\mathfrak{m}>0,$
we define the set
\[
K=\left\{k^{(n)}\in\mathbb{Z}^{d}\setminus\{0\}\,\,|\,\,
\sum_{i=1}^{d}(k^{(n)}_{i})^{2}\nu_i^2=\mathfrak{m}, \,\, n=1,\cdots,N
\right\}.
\]
We choose $k^{(n)}\in K$ and define the set of $\nu$ as
\[
I_{n}=\left\{\nu\in [1,2]^{d}\,\,\big|\,\,|-F_{k^{(n)}}(\nu)+\mathfrak{m}|\leq\delta\right\}
\]
with $F_{k^{(n)}}(\nu)=\sum_{i=1}^{d}\big(k_{i}^{(n)}\big)^{2}\nu_{i}^{2}.$
Then,
\[
\mathrm{mes}(I_{n})\leq\frac{2\delta}{\inf\{|\nabla F_{k^{(n)}}(\nu)|_{2}\}}
\leq\delta,
\]
where
$
\nabla F_{k^{(n)}}(\nu)=\left(2(k_{1}^{(n)})^{2}\nu_{1}, 2(k_{2}^{(n)})^{2}\nu_{2},\cdots, 2(k_{d}^{(n)})^{2}\nu_{d}\right),
$
$|\cdot|_{2}$ is $l^{2}$-norm.

Thus, we have
\[
\mathrm{mes}(I)=\mathrm{mes}\bigg(\bigcup_{n=1}^{N}I_{n}\bigg)\leq\sum_{n=1}^{N}\mathrm{mes}(I_{n})= O(\delta).
\]
\end{proof}

According to the Lemma, we conclude that when
$\nu\in [1,2]^{d}\setminus I,$ the diagonal operator $\mathcal{L}_{\nu,\frak{m}}$  does not have zero
eigenvalues and that the absolute value of the eigenvalues is bounded from
below by $\delta$.

We furthermore observe that for large $k$, the eigenvalues are
bounded from below by $C(1 + |k|^2)$. If the eigenvalues
do not vanish, we have  a bound
$|\Upsilon_{k, \mathfrak{m} }|  \ge C(1 +|k|^2)$.
Therefore, by Proposition~\ref{operatornorm}, one has the following proposition:
  \begin{proposition}\label{estimatesLLm1}
   If
    $\nu\in [1,2]^{d}\setminus I,$ we have
    \begin{equation*}
      \begin{split} \label{operatorestimates}
      & \| \LL^{-1} \|_{H^{\rho, r} \rightarrow H^{\rho,r} } \le \delta^{-1},\\
      & \| \LL^{-1} \|_{H^{\rho, r-2} \rightarrow H^{\rho,r} } < C
      \end{split}
     \end{equation*}
     for some constant  $C$.
\end{proposition}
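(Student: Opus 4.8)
The plan is to use that $\LL$ is diagonal in the Fourier basis, so that $\LL^{-1}$ is diagonal as well and its operator norms on the scale $H^{\rho,r}$ are governed by the size of the reciprocal eigenvalues through Proposition~\ref{operatornorm}. Write $\Upsilon_k=-\sum_{i=1}^d\nu_i^2k_i^2+\mathfrak{m}$ for the eigenvalue of $\LL$ on $e^{\mathrm{i}k\cdot x}$. The hypothesis $\nu\in[1,2]^d\setminus I$, with $I$ as in Lemma~\ref{mesI}, means precisely that $|\Upsilon_k|>\delta$ for every $k\in\ZZ^d$; in particular no $\Upsilon_k$ vanishes, so $\LL^{-1}$ acts diagonally by $e^{\mathrm{i}k\cdot x}\mapsto\Upsilon_k^{-1}e^{\mathrm{i}k\cdot x}$ and is a bounded operator (the series it defines converges in every $H^{\rho,r}$ by the weight computation underlying Proposition~\ref{operatornorm}). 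Since $\sup_k|\Upsilon_k|^{-1}\le\delta^{-1}$, the first estimate $\|\LL^{-1}\|_{H^{\rho,r}\to H^{\rho,r}}\le\delta^{-1}$ follows immediately from the first part of Proposition~\ref{operatornorm}.

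For the second estimate I would establish the stronger lower bound $|\Upsilon_k|\ge C^{-1}(1+|k|^2)$ for all $k$, which, applied with $\lambda=2$ and domain index $r-2$ in the second part of Proposition~\ref{operatornorm}, gives $\|\LL^{-1}\|_{H^{\rho,r-2}\to H^{\rho,r}}\le\sup_k|\Upsilon_k|^{-1}(1+|k|^2)\le C$. To prove the lower bound I split $\ZZ^d$ into two regimes. In the large-frequency regime, say $\sum_i k_i^2\ge 2\mathfrak{m}$, I use $\nu_i\ge 1$ to write $|\Upsilon_k|=\sum_i\nu_i^2k_i^2-\mathfrak{m}\ge\sum_i k_i^2-\mathfrak{m}\ge\tfrac12\sum_i k_i^2$, and then, passing between the $\ell^1$-norm $|k|=\sum_i|k_i|$ appearing in the definition of $\|\cdot\|_{\rho,r}$ and the Euclidean norm at the cost of a dimensional constant, this is bounded below by a constant times $1+|k|^2$. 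In the complementary regime there are only finitely many $k$, all with $1+|k|^2\le C_0$ for some $C_0=C_0(d,\mathfrak{m})$, so the non-resonance bound $|\Upsilon_k|>\delta$ gives $|\Upsilon_k|\ge(\delta/C_0)(1+|k|^2)$. Taking $C$ to be the larger of the two reciprocal constants finishes the argument.

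I do not expect a genuine obstacle in this proposition: the entire content is packaged in Proposition~\ref{operatornorm} and Lemma~\ref{mesI}, and what remains is a short bookkeeping argument. The only point deserving a little care is the dependence of the constant $C$ in the second estimate on $\delta$ (it degrades like $\delta^{-1}$ through the small-frequency regime) and on $\mathfrak{m},d$; this is harmless for the application in Theorem~\ref{analyticThm} but should be recorded so that $\epsilon_*$ there can be chosen consistently. It is also worth noting explicitly that both bounds are uniform in $\rho\ge 0$ and in $r$, since the weights $e^{2|k|\rho}(1+|k|^2)^r$ defining the norms multiply cleanly against diagonal operators, which is exactly the mechanism encoded in Proposition~\ref{operatornorm}.
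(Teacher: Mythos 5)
Your proposal is correct and follows the same route as the paper: both use Lemma~\ref{mesI} to get $|\Upsilon_k|>\delta$, observe that $|\Upsilon_k|\ge C^{-1}(1+|k|^2)$ by separating large and small frequencies, and conclude via Proposition~\ref{operatornorm}. You simply spell out the two-regime argument that the paper leaves implicit, and your remark that $C$ degrades like $\delta^{-1}$ through the small-frequency regime is a correct and useful clarification consistent with the paper's statement.
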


\subsection{Existence of solutions}

Recall the equation \eqref{contraction}
\begin{equation*}
u(x)=\epsilon\LL^{-1}\F(u)(x)\equiv \mathcal{T}(u)(x).
\end{equation*}
We will use the contraction mapping principle.

We assume that there is a closed  ball $B_{s}(0)$ around
the origin in $H^{\rho,r}$ with radius $s>0$, where
\[
B_{s}(0)=\{u\in H^{\rho,r}\,\,\big|\,\,\|u\|_{\rho,r}\leq s\},
\]
such that $\F$ is defined in $B_s(0) $ as an operator
from $B_s(0) \subset H^{\rho, r}$ to $H^{\rho, r-2}$. Moreover, we  assume that $\F$ is Lipschitz in $B_s(0)$  as an operator from
the $H^{\rho, r}$ to $H^{\rho, r-2}$. That is
\[
  \| \F(u_1) - \F(u_2) \|_{\rho, r-2} \le \Lip(\F) \| u_1 - u_2 \|_{\rho, r}.
  \]

  We will show that, for sufficiently small $\ep$, the operator
  $\mathcal{T}$ maps the ball $B_s(0)$ into itself and
  is a contraction.  Therefore, it has a unique fixed point in this ball.

Denote $\epsilon_{*}=\min\{1/(2 C\Lip(\mathcal{F})), s/ (2C\|\mathcal{F}(0)\|_{\rho,r-2})\}$.
When $0<\epsilon<\epsilon_{*},$
for any $u_{1}, u_{2}\in B_{s}(0),$ one has
\begin{equation*}
\begin{split}
\|\mathcal{T}(u_{1})-\mathcal{T}(u_{2})\|_{\rho,r}
&=\|\epsilon \LL^{-1}( \F(u_1)- \F(u_2) ) \|_{\rho, r} \\
&\leq\ep \|\LL^{-1} \|_{H^{\rho,r-2}\rightarrow H^{\rho,r}} \| \F(u_1) - \F(u_2) \|_{\rho, r-2} \\
& \le \ep \| \LL^{-1}\|_{H^{\rho,r-2}\rightarrow H^{\rho,r}} \Lip(\F) \|u_1 - u_2 \|_{\rho, r}\\
&\leq \frac{1}{2}\|u_1 - u_2 \|_{\rho, r}.
\end{split}
\end{equation*}
For $u\in B_{s}(0),$ one has
\begin{equation*}
\begin{split}
\|\mathcal{T}(u)\|_{\rho,r}
&=\|\mathcal{T}(0)+\mathcal{T}(u)-\mathcal{T}(0)\|_{\rho,r}\\
&\le \ep  \|\LL^{-1} \|_{H^{\rho,r-2}\rightarrow H^{\rho,r}}\|\F(0) \|_{\rho,r-2} +\frac{1}{2}s \\
&\leq s.
\end{split}
\end{equation*}
It follows from  the contraction principle that there exists a unique solution $u(x)\in B_{s}(0)$ belonging to $H^{\rho,r}$ for \eqref{operator}.

One could think of optimizing the choice of $\ep_*$ to obtain
uniquness in a larger ball. A different optimization is to locate
the solution in a smaller ball.

\section{Resonant case}\label{resonant}
In this section,  we
study the case when the operator $\LLo$ has a nontrivial
kernel (this case was not considered in \cite{Shi}).

We will explain the general theory and give a concrete example when
the nonlinearity is just $f(u) = u^2$.
Note that, in this case, the forcing is identically zero at the origin, so that the
solutions produced by Theorem~\ref{analyticThm} are just $0$. We will show
that, near the bifurcation points, there are some solutions
besides those produced by Theorem~\ref{analyticThm}.

The nonlinearity $u^2$ has been chosen for simplicity. As we will see
in Remark~\ref{higherorder},  the same results apply for all linearities
which vanish to second order in $u$. Notice that adding nonlinearities
with a nonvanishing linear term in $u$ is better dealt with by changing the
linear operator we are considering.

 We first rescale the original system to get a slow system. Let $v=\epsilon u$.
Then the equation \eqref{evolution} becomes
\begin{equation}\label{problem}
\LL v(x)= v^{2}(x).
\end{equation}
Equation~\eqref{problem}  can  be rewritten as
\begin{equation}\label{newproblem}
(\LLo +(\mathfrak{m}-\mathfrak{m}_{0}))v(x)= v^{2}(x).
\end{equation}

We will show that for values $\frak{m}$ close to $\frak{m}_0$,
the problem   \eqref{problem} may have several small solutions.
These solutions are functions of the
 \emph{bifurcation parameter} $\mathfrak{m} - \mathfrak{m}_{0}$.
Clearly, the existence of several small solutions establishes
that one cannot apply the contraction mapping principle.

\begin{remark}\label{symmetry}
	Notice that if $v:\mathbb{T}^{d}\rightarrow\mathbb{R}$ satisfies \eqref{problem}, then for any $x_*\in\mathbb{R}^{d},$ the functions
	\[v_{x_{*}
}(x)=v(x+x_{*})\]
are also the solutions so that  we always
 obtain $d$-dimensional families of solutions.

Notice also that the dimension of the kernel is  expected to
be  $2^d$ (see Remark~\ref{flipping})
and $2^d > d$.  So that, when the dimension grows,
the dimension of the kernel grows much faster than the dimension of
the families.
\end{remark}

We anticipate that the main
difficulty is that the kernel will be high dimension and that,
at the same time, there are symmetries. Since the dimension of the
kernel grows exponentially with the dimension $d$
and the dimension of the symmetry is $d$, we will restrict
our study to $d = 1,2$ and only make some remarks about $d \ge 3$.
\medskip

We give the following result:

\begin{theorem}\label{thm:bifurcation}
Consider the problem \eqref{newproblem}.

Assume that the dimension $d$ of the space is either $1$ or $2$ and that the parameter $\nu$ satisfies Assumption~\ref{simplicity}.
Assume also that the parameter $\nu$ does
not belong to another set of measure zero  on
which an explicit rational function vanishes.

Let $\frak{m}_0$ be such that  the   operator $\LLo$ has
a nontrivial kernel, which by Assumption~\ref{simplicity},
has dimension $2^d$ and consists of exponentials of
wave vectors which are obtained by changing signs in a vector.

Let $\sigma$  be the sign of an explict formula
$A+B$ (given in \eqref{AB} in  two dimensions).

Then for $\frak{m}$ sufficiently close to (not equal to) $\frak{m}_0$ ,
and with $\frak{m} - \frak{m}_0$ having the  sign $\sigma$,  the problem \eqref{newproblem} admits $d$-dimensional families of non-zero solutions.

Moreover, these solutions are functions of the
bifurcation parameter $(\mathfrak{m} - \mathfrak{m}_{0})$ and are
analytic in  $|\mathfrak{m} - \mathfrak{m}_{0}|^{\frac{1}{2}}$.
\end{theorem}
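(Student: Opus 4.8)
The plan is to run the classical Lyapunov--Schmidt (Cesari alternative) reduction described in Section~\ref{mainidea} on equation \eqref{newproblem} and then solve the resulting finite-dimensional bifurcation equation by hand. Fix $\rho\ge 0$ and $r$ with $r-2>d/2$, so that $H^{\rho,r}$ is a Banach algebra by Lemma~\ref{Banachalgebra} and $v\mapsto v^2$ is a bounded, locally Lipschitz (indeed analytic) self-map of $H^{\rho,r}$. Since Assumption~\ref{simplicity} makes $0$ an isolated eigenvalue of $\LLo$ of multiplicity $2^d$, the rest of the spectrum is bounded away from $0$; hence $\LLo^R:=\Pi_R\LLo\Pi_R$ is boundedly invertible on $\Pi_R H^{\rho,r}$, and by Proposition~\ref{operatornorm} its inverse even gains two derivatives. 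Writing $v=\bar v+\hat v$ with $\bar v=\Pi_K v$, $\hat v=\Pi_R v$, and $\mu:=\mathfrak m-\mathfrak m_0$, the range equation from \eqref{alternative2},
\[
\hat v=(\LLo^R)^{-1}\bigl(-\mu\hat v+\Pi_R(\bar v+\hat v)^2\bigr),
\]
is a contraction on a small ball of $\Pi_R H^{\rho,r}$ when $\bar v$ and $\mu$ are small; its fixed point $\hat v=\Phi(\bar v,\mu)$ is real when $\bar v$ is real, depends analytically on $(\bar v,\mu)$, and has the expansion $\Phi(\bar v,\mu)=(\LLo^R+\mu)^{-1}\Pi_R\bar v^2+O(\|\bar v\|^3)$; in particular $\Phi=O(\|\bar v\|^2)$.

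Substituting into the bifurcation equation gives $\mu\bar v=\Pi_K\bigl((\bar v+\Phi(\bar v,\mu))^2\bigr)=:G(\bar v,\mu)$. Parametrise the kernel by $\bar v=\sum_j(z_j\psi_j+\bar z_j\bar\psi_j)$, where the $\psi_j=e^{\mathrm i\,\tilde k^{(j)}\cdot x}$ run over the resonant wave vectors with a fixed choice of the sign of the first component — so $d=1$ has one complex amplitude $z$ and $d=2$ two amplitudes $z_1,z_2$ (four real parameters). The key structural observation is a parity argument: every frequency occurring in a product of $m$ resonant vectors has each coordinate an integer multiple of $k_i$ congruent to $m\bmod 2$, so $\Pi_K$ annihilates every contribution to $G$ of even order in $\bar v$. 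In particular $\Pi_K\bar v^2=0$, the order-four term of $G$ vanishes as well, and $G(\bar v,\mu)$ begins at order three. The cubic term is computed from the leading part of $\Phi$: $\Phi$ reproduces the modes of $\bar v^2$ divided by the eigenvalues $\Upsilon$ of $\LLo$ (for instance $\Upsilon=-3\mathfrak m_0$ on the $2\tilde k^{(j)}$ modes, and in $d=2$ also $\Upsilon=\mathfrak m_0-4\nu_i^2k_i^2$ on the mixed modes $e^{\pm 2\mathrm i k_i x_i}$), and $\Pi_K(2\bar v\,\Phi)$ collects precisely those products of a $\bar v$-mode with a $\Phi$-mode that return to the kernel. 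Carrying out this bookkeeping — which is where essentially all the work lies, the combinatorics becoming heavier as $d$ grows and requiring the exclusion of accidental resonances guaranteed by Assumption~\ref{simplicity} together with the non-vanishing of denominators such as $\nu_i^2k_i^2-3\nu_j^2k_j^2$ — yields, for $d=1$,
\[
\mu z=\frac{10}{3\mathfrak m_0}\,|z|^2z+O(|z|^5),
\]
and for $d=2$ the $\mathbb Z_2$-symmetric system $\mu z_j=2\bigl(A|z_j|^2+B|z_{3-j}|^2\bigr)z_j+O(|z|^5)$ for $j=1,2$, with $A=5/(3\mathfrak m_0)$ and $B$ the explicit rational expression in \eqref{AB}; the conjugate equations are automatically satisfied, and the $O(|z|^5)$ remainders carry only harmless $O(\mu)$-corrections to $A$ and $B$.

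Finally one solves the reduced equation. Put $t=|\mu|^{1/2}$, rescale $z=t\zeta$ (resp.\ $z_j=t\zeta_j$), and divide by $t^3$: the bifurcation equation becomes an equation analytic in $t$ whose value at $t=0$ is $\zeta=\tfrac{10}{3\mathfrak m_0}|\zeta|^2\zeta$ in $d=1$ (forcing $\mu>0$, so $\sigma=+$) and, in $d=2$, $\sigma\zeta_j=2(A|\zeta_j|^2+B|\zeta_{3-j}|^2)\zeta_j$; for $\sigma=\operatorname{sign}(A+B)$ this admits the solution $|\zeta_1|=|\zeta_2|=(2|A+B|)^{-1/2}$ with both phases arbitrary. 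The zero-set of the $t=0$ equation is thus a $d$-torus (a circle for $d=1$, a $2$-torus for $d=2$), and the Jacobian of the reduced map transverse to this torus equals, up to a positive scalar, the number $10/(3\mathfrak m_0)$ for $d=1$ and the matrix $\begin{pmatrix}A&B\\ B&A\end{pmatrix}$ for $d=2$, which is invertible exactly because $A+B\ne 0$ and $A\ne B$ — precisely the non-vanishing of the advertised rational function of $\nu$. The equivariant implicit function theorem continues this $d$-torus of solutions to all small $t$, analytically in $t$, and unscaling produces, whenever $\operatorname{sign}(\mathfrak m-\mathfrak m_0)=\sigma$, a $d$-parameter family of nonzero real solutions $v$ of \eqref{newproblem} depending analytically on $|\mathfrak m-\mathfrak m_0|^{1/2}$; the $d$ free phases are the trace of the translation symmetry $v\mapsto v(\cdot+x_*)$. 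The main obstacle is exactly the explicit cubic-coefficient computation in the bifurcation equation together with the verification that no spurious resonances occur; the reduction and the final implicit-function-theorem step are routine.
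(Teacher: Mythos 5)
Your proposal follows essentially the same route as the paper: Lyapunov--Schmidt reduction, solution of the range equation by contraction and its analyticity in $\alpha$, the parity observation (Lemma~\ref{noeven}) that kills even-order terms in the bifurcation equation, explicit computation of the cubic coefficients via the $\Upsilon_{k^p+k^q}$ divisors, and a scaling plus implicit-function-theorem step; your non-degeneracy condition ($A\pm B\ne 0$) and sign condition ($\operatorname{sign}\mu=\sigma$) are exactly Propositions~\ref{dernonzero} and~\ref{samesign}. The only cosmetic difference is that the paper first uses the factorization of Proposition~\ref{Balpha} to pass to the real moduli $z=(|\alpha_1|^2,|\alpha_2|^2)$ and then applies the ordinary implicit function theorem after rescaling $z=\epsilon\tilde z$, whereas you keep the complex amplitudes and invoke the equivariant implicit function theorem transverse to the phase $d$-torus; both are standard and equivalent ways of disposing of the translation symmetry, and your bookkeeping (e.g.\ the coefficient $\tfrac{10}{3\mathfrak m_0}$ for $d=1$) is in fact arithmetically corrected relative to the paper's formula \eqref{M}.
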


\begin{remark}\label{m-m0}
Notice that the branches of \eqref{newproblem} exist for both  intervals $|\frak{m}-\frak{m}_{0}|$ small and satisfying that  when $d=1$, $\frak{m}-\frak{m}_{0}$ is positive, and when $d=2,$ $\frak{m}-\frak{m}_{0}$ having the same sign with $A+B$,  which is denoted by $\sigma$ in Theorem~\ref{thm:bifurcation}.

The cases when the extra branch appears for $\frak{m} - \frak{m}_0 < 0$ are
called \emph{subcritical bifurcation} and the cases when
the extra branch of solutions appears for  $\frak{m} - \frak{m}_0 > 0$
are called \emph{supercritical bifurcation}.

Most of the classical bifurcation theory is concerned with existence
of stationary solutions, but in this case, due to the symmetries
of the problem, we obtain always families of equilibria.
\end{remark}

The proof of Theorem~\ref{thm:bifurcation}
is based on the alternative method of bifurcation theory explained
before. In this section, we give all the needed details for
the specific nonlinearity $f(u)=u^2$.

From the analysis in Section \ref{mainidea}, we know that equation \eqref{newproblem} can be regarded as the \emph{``bifurcation equation''} and the \emph{``range equation''}, respectively:
\begin{equation}\label{B}
  (\frak{m}-\frak{m}_{0})\ov=\Pi_K (\hv + \ov )^{2},\quad\quad\quad\,\,\,
\end{equation}
\begin{equation}\label{R}
\big(\LLo+(\frak{m}-\frak{m}_{0})\big)\hv = \Pi_R (\hv + \ov)^{2}.
\end{equation}
\subsection{Some general results on the range equation}
The range equation is much simpler to deal with than
the bifurcation equation and admits a
general theory.  As we will see, it admits solutions which
are analytic in $\ov$ under rather general circumstances.

For notational convenience later, we will introduce some coordinates
\[\alpha=(\cdots, \alpha_{j},\,\cdots),\,\,j=1,\cdots,2^d\]
in the kernel.  So that $\ov= \sum_j \alpha_j\exp(\mathrm{i} k^j \cdot x)$
for the $k^j$ the wave numbers in the kernel of $\LLo$.
Then, we will show that the $\hv$ solving the range equation is  analytic  as the function of $\alpha$.

Since we are going to do algebra of polynomials,
it is convenient to think of $\alpha$ as complex numbers
(even if the problem at hand is real). It also allows us to
present the eigenfunctions as exponentials rather than as
pair of $\sin/\cos$. This is useful when considering products.

The counting of dimensions is slightly delicate:
If  we want to have real solutions, we will need
that if $k = -k$, the corresponding coefficients
satisfy $\alpha = \alpha^*$.  The complex dimension of the space of
$\alpha$ is the dimension of the kernel. If we require  that the
solutions are real, the complex dimension will be half  the dimension
of the kernel, which corresponds to  having a real dimension equal
to the dimension of the kernel.

Consider the range equation \eqref{R} in the space $H^{\vartheta,\rho,r}$ with $0<\vartheta\ll 1$, where
\begin{equation*}
\begin{split}
H^{\vartheta,\rho,r}=\big\{v:\mathbb{C}^{2^d}\rightarrow H^{\rho,r}\,\,\big| \,\,v(\alpha)=\sum_{j=0}^{\infty} v_j \alpha^j,\,\|v\|_{\vartheta,\rho,r}=\sum_{j=0}^{\infty} \|v_j\|_{\rho,r}\vartheta^j<\infty\big\}
\end{split}
\end{equation*}
is a Banach algebra. Then we have the following results for any dimension $d$.
\begin{lemma}\label{convergent}
	There exists a solution $\hat{v}\in H^{\vartheta,\rho,r}$, for the range equation \eqref{R}, which is analytic in $\alpha\in \mathbb{C}^{2^d}$.
	\end{lemma}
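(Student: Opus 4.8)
The plan is to solve the range equation \eqref{R} by the contraction mapping principle directly in the Banach algebra $H^{\vartheta,\rho,r}$, taking advantage of the fact that the nonlinearity $u^2$ loses no derivatives and that $\LLo^{R}:=\Pi_R\LLo\Pi_R$ is boundedly invertible on $\Pi_R H^{\rho,r}$. First I would record the linear facts. Since the spectrum of $\LLo$ is discrete with $0$ isolated, the remaining eigenvalues are bounded away from $0$ by some $\gamma>0$, so by Proposition~\ref{operatornorm} we have $\|(\LLo^{R})^{-1}\|_{H^{\rho,r}\to H^{\rho,r}}\le\gamma^{-1}$ and, with the gain of two derivatives, $\|(\LLo^{R})^{-1}\|_{H^{\rho,r-2}\to H^{\rho,r}}\le C$. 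For $|\mathfrak m-\mathfrak m_0|<\gamma/2$ a Neumann series shows $\LLo^{R}+(\mathfrak m-\mathfrak m_0)$ is still invertible on $\Pi_R H^{\rho,r}$ with bounds of the same order; writing $M_{\mathfrak m}:=(\LLo^{R}+(\mathfrak m-\mathfrak m_0))^{-1}\Pi_R$, this operator acts coefficient-by-coefficient on power series in $\alpha$ and hence is bounded on $H^{\vartheta,\rho,r}$ with norm $\le 2/\gamma$, uniformly in $\vartheta$ and in $\mathfrak m$ near $\mathfrak m_0$. Finally, $\ov(\alpha)=\sum_{j=1}^{2^d}\alpha_j e^{\mathrm i k^j\cdot x}$ is a degree-one polynomial in $\alpha$, so $\ov\in H^{\vartheta,\rho,r}$ with $\|\ov\|_{\vartheta,\rho,r}\le\vartheta\,\max_j\|e^{\mathrm i k^j\cdot x}\|_{\rho,r}$, which is as small as we like after shrinking $\vartheta$.

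Next I would rewrite \eqref{R} as the fixed-point equation $\hv=\mathcal S(\hv):=M_{\mathfrak m}\,(\hv+\ov)^2$ and check it on a closed ball $B_\rho'\subset H^{\vartheta,\rho,r}$ of small radius $\delta$. Self-mapping follows from the Banach-algebra property of $H^{\vartheta,\rho,r}$: $\|\mathcal S(\hv)\|_{\vartheta,\rho,r}\le\|M_{\mathfrak m}\|\,(\delta+\|\ov\|_{\vartheta,\rho,r})^2$, which is $\le\delta$ once $\delta$ and $\vartheta$ are chosen small (the right side is $O(\delta^2)$ when $\|\ov\|_{\vartheta,\rho,r}=O(\delta)$). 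The contraction estimate uses the factorization $(\hv_1+\ov)^2-(\hv_2+\ov)^2=(\hv_1-\hv_2)(\hv_1+\hv_2+2\ov)$, whence $\|\mathcal S(\hv_1)-\mathcal S(\hv_2)\|_{\vartheta,\rho,r}\le\|M_{\mathfrak m}\|\,(2\delta+2\|\ov\|_{\vartheta,\rho,r})\,\|\hv_1-\hv_2\|_{\vartheta,\rho,r}<\tfrac12\|\hv_1-\hv_2\|_{\vartheta,\rho,r}$ for $\delta,\vartheta$ small. The unique fixed point $\hv=\hv(\alpha)$ is, by the very definition of $H^{\vartheta,\rho,r}$, a convergent power series $\sum_j v_j\alpha^j$ with $v_j\in H^{\rho,r}$, i.e.\ analytic in $\alpha$ on a polydisc of radius $\vartheta$; and since all constants above are uniform for $|\mathfrak m-\mathfrak m_0|<\gamma/2$, the same argument (enlarging the space to power series in $(\alpha,\mathfrak m-\mathfrak m_0)$) gives joint analyticity in $(\alpha,\mathfrak m-\mathfrak m_0)$. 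I would also record that $\hv(0)=0$: when $\ov=0$ the equation reads $(\LLo^{R}+(\mathfrak m-\mathfrak m_0))\hv=\Pi_R\hv^2$, whose unique small solution is $\hv=0$, so the constant term $v_0$ of the expansion vanishes — a fact needed when substituting $\hv(\alpha)$ back into the bifurcation equation \eqref{B}.

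I do not expect an essential obstacle: because $f(u)=u^2$ loses no derivatives, the entire argument stays inside $H^{\rho,r}$ (the two-derivative smoothing of $(\LLo^{R}+(\mathfrak m-\mathfrak m_0))^{-1}$ is a free bonus), and the only real work is the bookkeeping that picks $\delta$ and $\vartheta$ compatibly and the verification that the coefficient-wise operators $\Pi_R$ and $(\LLo^{R}+(\mathfrak m-\mathfrak m_0))^{-1}$ are bounded on $H^{\vartheta,\rho,r}$; the mildly delicate point is the uniform-in-$\mathfrak m$ invertibility of the perturbed range operator, which is exactly the Neumann-series step using $|\mathfrak m-\mathfrak m_0|<\gamma/2$. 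An equivalent, slightly slicker route is to apply the analytic implicit function theorem (\cite{HilleP57}) to $G(\hv,\alpha):=(\LLo^{R}+(\mathfrak m-\mathfrak m_0))\hv-\Pi_R(\hv+\ov(\alpha))^2$, which is analytic from $\Pi_R H^{\rho,r}\times\mathbb C^{2^d}$ to $\Pi_R H^{\rho,r}$, vanishes at $(0,0)$, and has $D_{\hv}G(0,0)=\LLo^{R}+(\mathfrak m-\mathfrak m_0)$ invertible; this yields the locally unique analytic branch $\hv=\hv(\alpha)$ at once.
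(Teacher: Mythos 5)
Your proposal is correct and follows essentially the same route as the paper: a contraction mapping argument for the range equation in the Banach algebra $H^{\vartheta,\rho,r}$, exploiting the bounded invertibility of $\LLo$ restricted to the range and the smallness of $\ov$ (through $\vartheta$) and of the ball radius, with analyticity in $\alpha$ read off from the definition of the space. The only cosmetic difference is that you absorb the $(\mathfrak m-\mathfrak m_0)\hv$ term into the inverted linear operator via a Neumann series, whereas the paper keeps it as a small Lipschitz term inside the fixed-point map $T(\hv)=\LLo^{-1}\big(-\epsilon\hv+\Pi_R(\hv+\ov)^2\big)$.
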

\begin{proof}
	We assume $\mathfrak{m}-\mathfrak{m}_{0}=O(\epsilon).$	Then, range equation \eqref{R} can be rewritten as:
	\begin{equation}\label{R1}
\hv = \LLo^{-1}\big(-\epsilon\hv+\Pi_R (\hv + \ov)^{2}\big)\equiv T(\hv).
	\end{equation}
It is easy to see that  the operator $T$ defined in \eqref{R1} maps the space $H^{\vartheta,\rho,r}$ into itself.

We choose a ball $\mathcal{B}_{s}(0)\subset H^{\vartheta,\rho,r}$. The remaining task is to verify that the operator  $T$ maps the ball into itself and it is a contraction in this ball.

Since $\ov=\sum_{j=1}^{2^{d}}\alpha_{j}\exp{\mathrm{i}k^j x}$, belonging to the kernal space of $\LLo$,  only contains finite terms,  there exists a constant $C$ such that
\[\|\ov\|_{\vartheta,\rho,r}\leq C\vartheta.
\]
Therefore, one has
\[\text{Lip}(T)\leq C(\epsilon +\vartheta+s)\leq \frac{1}{10}\]
when we choose $\epsilon$ small enough and $s<\frac{1}{20C}-\vartheta$. This reveals that $T$ is a contraction in the ball $
	\mathcal{B}_{s}(0)$.
On the other hand,
	for  $U\in \mathcal{B}_{s}(0)$ with $s$ chosen above, one has
	\begin{equation*}
	\begin{split}
	\|T(\hv)\|_{\vartheta,\rho,r}
	\leq\|T(0)\|_{\vartheta,\rho,r}+ \|T(\hv)-
	T(0)\|_{\vartheta,\rho,r}
	\leq C\vartheta+
	\frac{1}{10}s\leq s
	\end{split}
	\end{equation*}
by  choosing the radius $s$ satisfies $\frac{10C}{9}\vartheta\leq s<\frac{1}{20C}-\vartheta$.
	
		In conclusion, by the fixed point theorem in the Banach space  $H^{\vartheta,\rho,r}$,
	there exists a unique solution $\hv\in H^{\vartheta,\rho,r}$ analytic in $\alpha$ for the  equation \eqref{R1}.

	\end{proof}
\begin{remark} \label{higherorder}
Note that the proof of Lemma~\ref{convergent} works even if the
nonlinearity is an analytic function  starting with
quadratic terms.

There are also versions of the argument assuming only finite differentiability
of the nonlinearity (acting on spaces of finite-differentiable functions).
Note that, for subsequent use, we only need a finite number of
derivatives. The aim of this paper is not to give a complete coverage,
but to illustrate the possibilities in one example.
\end{remark}
\subsection{Some pereliminary analysis of the bifurcation equation} The fact that $\hv$ is analytic in $\alpha$, shows
that we can write the bifurcation equation \eqref{B} as

\begin{equation} \label{bifurcation}
\ep \alpha =  B(\alpha)\equiv \sum_{j \in \mathbb{Z}^{\text{L}}}
B_j \alpha^j
\end{equation}
where we are using multi-index notation for $\alpha^j$, denoting
by $\text{L}:=2^d$ the dimension of the Kernel of $\LLo$ and
the $B_j$ are (complex) vectors of lenghth $\text{L}$.

In this section, we will present some general results about the
bifurcation equation, which hold for all dimensions of the kernel.

Later on, we will present some complete results for the low dimensional
cases and some remarks that show that the  higher dimensional cases
are more complicated.

We will assume for all subsequent work :
\begin{assumption}\label{simplicity}
The wave numbers of  the eigenfunctions in the
kernel  of $\LLo$ are obtained by changing signs of components of
a vector.
\end{assumption}
As indicated before, for a set of full measure of $\nu$,
Assumption~\ref{simplicity} holds for all  the eigenvalues
of $\LLo$.

An important observation is that, since all the eigenvalues
are exponentials  and the product of eigenvalues is also
an exponential.   The projections over the kernel and the range
are very easy acting on  exponentials. They  either return the same
exponential of zero.
Similarly, we recall that $\LLo$ and $\LLo^{-1}$ acting on exponentials
are just multiplying by a number.

\begin{lemma}\label{noeven}
With the notations of \eqref{bifurcation} and Assumption~\ref{simplicity}.
If $|j|$ is even, then $B_j = 0.$
\end{lemma}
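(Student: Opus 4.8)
\textbf{Proof plan for Lemma~\ref{noeven}.}
The plan is to track the parity of the total degree under the iteration that produces $\hv(\alpha)$, and then to use the structure of the projection $\Pi_K$. First I would set up a grading: assign to each monomial $\alpha^j$ the total degree $|j|$, and observe that $\ov = \sum_j \alpha_j \exp(\mathrm{i} k^j\cdot x)$ is homogeneous of degree $1$ in $\alpha$. Since the eigenfunctions in the kernel are, by Assumption~\ref{simplicity}, exponentials $\exp(\mathrm{i} k^j\cdot x)$ with $k^j$ obtained by sign changes from a fixed vector $k$, every such $k^j$ satisfies $|k^j|=|k|$ (componentwise absolute values agree), so in particular $k^j\neq 0$; the crucial point is that \emph{no} kernel wave vector is $0$ (since $\frak{m}_0>0$). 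The claim I want is that $\hv(\alpha)$, the solution of the range equation furnished by Lemma~\ref{convergent}, contains only monomials $\alpha^j$ with $|j|$ \emph{odd}, and that consequently $B(\alpha)$ contains only monomials with $|j|$ odd.

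The key step is an induction on the fixed-point iteration $\hv_{n+1} = T(\hv_n) = \LLo^{-1}\big(-\epsilon \hv_n + \Pi_R(\hv_n+\ov)^2\big)$, started from $\hv_0 = 0$. I would prove by induction that each $\hv_n$, expanded in Fourier modes $\exp(\mathrm{i} m\cdot x)$ and in powers of $\alpha$, has the property that a monomial $\alpha^j \exp(\mathrm{i} m\cdot x)$ can appear only when $|j|$ and the ``total sign-parity'' are linked — more precisely, when $m$ is a sum of exactly $|j|$ of the kernel vectors $k^{j_1},\dots,k^{j_{|j|}}$ (with multiplicity). This is preserved by $T$: squaring $(\hv_n + \ov)$ adds degrees in $\alpha$ and adds the corresponding wave vectors, $\LLo^{-1}$ and $\Pi_R$ are diagonal in the Fourier basis and change neither $m$ nor $|j|$, and the linear term $-\epsilon\hv_n$ obviously preserves the property. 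Hence the limit $\hv$ has the same structure. Now in the bifurcation equation, $B(\alpha) = \Pi_K(\hv+\ov)^2$: a monomial $\alpha^j$ of $B$ of total degree $|j|$ arises from a product of terms whose $\alpha$-degrees sum to $|j|$ and whose wave vectors sum to some $m$, and $\Pi_K$ keeps only those terms with $m$ a kernel vector $k^\ell$ (or, a priori, $0$ — but $0$ is not a kernel vector). By the structure just established, $m$ is a $\pm$-signed sum of $|j|$ copies of the fixed vector $k$; writing $m = \big(\sum \pm 1\big) k$ up to permuting components, the coefficient multiplying $k$ in each coordinate is a sum of $|j|$ terms each $\pm 1$, hence has the same parity as $|j|$. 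For $m$ to equal a kernel vector $k^\ell$ (all of whose components equal $\pm|k_i|\neq \pm\text{multiple}$ unless the multiplier is $\pm1$), one needs that signed sum to be $\pm 1$ in the relevant coordinates, which forces $|j|$ to be \emph{odd}. Therefore $B_j = 0$ whenever $|j|$ is even.

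The main obstacle I anticipate is making the ``wave vectors sum correctly'' bookkeeping fully rigorous when the $\nu_i^2$ may satisfy rational relations that cause collisions — i.e.\ ensuring that the only way a product of kernel exponentials lands back in the kernel is the ``obvious'' parity-odd way, not via some accidental linear relation $\sum_{s} \pm k = k^\ell$ with an even number of summands. This is precisely what is controlled by the hypotheses of Theorem~\ref{thm:bifurcation} (that $\nu$ avoids a further measure-zero set) and by Assumption~\ref{simplicity}; I would invoke these to rule out accidental resonances, reducing the combinatorics to the clean statement that $m = c\,k$ (componentwise, up to sign pattern) with $c$ an odd integer when $|j|$ is odd and even when $|j|$ is even, and that $c=\pm1$ is forced by $m$ being a kernel vector. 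A clean way to package this is to note $\sum_{i} m_i / k_i$ (over nonzero components, which are all nonzero here since $\eta(k)=d$) is an integer congruent to $|j| \bmod 2$ and must equal $\pm d$ for $m$ in the kernel; parity then gives the result immediately. The rest is the routine induction sketched above.
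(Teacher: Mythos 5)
Your approach---tracking that every monomial of $\hv$, and hence of $B$, carries wave vector $j_1 k^1+\dots+j_{\text{L}} k^{\text{L}}$, and then reading off parity in a single coordinate---is the same as the paper's. The paper packages the bookkeeping as the closure statement Proposition~\ref{goodfunction} for the class $\mathcal{G}$ of functions whose monomials have the form \eqref{goodform}; your induction on the iterates of $T$ establishes exactly the same fact, so there is no real difference of method.

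Two points are worth flagging, though. First, your worry about ``accidental resonances'' and the appeal to the extra measure-zero exclusion set is unnecessary: the single-coordinate argument is purely arithmetic and does not use any irrationality of the $\nu_i^2$. The first component of every kernel vector is $\pm a$ with $a\neq 0$ (since $\eta(k)=d$), so the first component of $j_1 k^1+\dots+j_{\text{L}} k^{\text{L}}$ equals $c\cdot a$ with $c\equiv|j|\pmod 2$; for this to equal $\pm a$ one needs $c=\pm1$, hence $|j|$ odd, regardless of any rational relations among the $\nu_i^2$. Second, the ``clean way to package'' you propose at the end via $\sum_i m_i/k_i$ is flawed. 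For $m$ a kernel vector each $m_i/k_i=\pm1$, so the sum ranges over $\{-d,-d+2,\dots,d\}$ (not just $\pm d$) and is $\equiv d\pmod 2$; what you compute is $\equiv d\,|j|\pmod 2$. For even $d$ the congruence $d\,|j|\equiv d\pmod 2$ holds for every $|j|$, so the summed invariant detects nothing. Stick with the single-component version, as both you (earlier in your own argument) and the paper do.
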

\begin{proof}
We observe that the powers of
$\ov$ have products of $\alpha_j  \exp(\mathrm{i}  k^j \cdot x)$.
So, they are monomials of the form:
\begin{equation}\label{goodform}
B_j \alpha_1^{j_1}\cdots \alpha_{\text{L}}^{j_{\text{L}}}
\exp( \mathrm{i} ( k^1 j_1+ \cdots + k^{\text{L}}j_{\text{L}})\cdot x)
\end{equation}
with the  $k^j$ being wave numbers of functions in the kernel.

The following result is obvious.
\begin{proposition} \label{goodfunction}
We consider the class of functions $\mathcal{G}$ which are
analytic functions of $\alpha$ and all the terms are of
the form \eqref{goodform}.

If $v_1, v_2 \in \mathcal{G}$,
the following belong to $\mathcal{G}$
\[
v_1 + v_2, v_1 \cdot v_2, \Pi_K v_1, \Pi_R v_1, \LLo^{-1} \Pi_R v_1.
\]
\end{proposition}

Hence, it follows  that all  the  terms in
$\Pi_K(\ov+ \hv)^2 $
are of the form \eqref{goodform}.
The bifurcation equation \eqref{bifurcation} is
obtained by equating the coefficents of the
same  exponential functions.

Therefore, the only terms that can appear in the bifurcation equation
are terms in which
$j_1 k^1+ \cdots+ j_{\text{L}} k^{\text{L}} $ is one of the wave numbers in the kernel.

If we look at the first component, the components of
 the $k^1$ are $\pm a$.  A necessary condition for the sum
to be in the kernel is that the first component is $\pm a$.

We just observe that it is impossible to add an even number of
$\pm a$ in such a way that the sum is either $a$ or $-a$.
\end{proof}

\subsubsection{2-dimensional case}
We present our main idea for $2$-dimensional case  under Assumption~\ref{simplicity}.
We remark that the same analysis applies in higher
dimensions when the wave numbers of eigenvalues in the kernel has
only 2 nonzero components.

We denote by
\[\LLo\exp\{\mathrm{i}kx\}=\Upsilon_k\exp\{\mathrm{i}kx\},\] where $\Upsilon_{k}=-\sum_{j=1}^{2}k^2_{j}\nu^2_{j}+\frak{m}_{0}, k\in\mathbb{Z}^{2},$ are the eigenvalues of the operator $\LLo$ and $\exp\{\mathrm{i}kx\} $ are the eigenfunctions corresponding to $\Upsilon_{k}.$
We observe that  the null space of the operator $\LLo$ has complex
dimension 4, and the eigenvectors are $\exp\{\mathrm{i}kx\},$
$k\in \mathcal{K},$ where the set $\mathcal{K}$ is defined as :
\begin{equation}\label{wave}
\begin{split}
\mathcal{K}:=\{k\in\mathbb{Z}^{2}\,\big|\,& k=\{(\pm a,\pm b)\},\,\,a,b\in \mathbb{N}/\{0\},\\ &satisfying\,\,\Upsilon_{k}=0, i.e.,-(a^{2}\nu_{1}^2+b^2\nu_2^2)+\mathfrak{m}_{0}=0\}.
\end{split}
\end{equation}

We denote the null space of operator $\LLo$ by
\begin{equation*}
Ker:=\{\ov_{\alpha}:\mathbb{T}^2_{\rho}\rightarrow\mathbb{C}\,\big|\, \ov_{\alpha}(x)=\sum_{j=1}^{4}\alpha_{j}\exp\{\mathrm{i}k^jx\},
\alpha_{j}\in\mathbb{C},
k^{j}\in\mathcal{K}\}.
\end{equation*}

Note that for real function, without loss of generality, we suppose that  $k^1=-k^4,\,k^2=-k^3$ then $\alpha_4=\alpha_1^*,\,\alpha_3=\alpha_2^*$. Thus we only need to determine $\alpha_1$ and $\alpha_2.$ More precisely, we take $k^1=( a, b),\
 k^2=(a,-b),\,k^3=(-a,b),\,k^4=(-a,-b)$.
\medskip

Now, we go back to the bifurcation equation
\begin{equation}\label{B-2}
(\frak{m}-\frak{m}_{0})\ov_{\alpha}(x)=\Pi_K (\hv_{\alpha}(x) + \ov_{\alpha}(x) )^{2},\,\,
\end{equation}
which can be rewritten as
\begin{equation}\label{B2-dim}
\begin{split}
\epsilon\alpha_l \exp(\mathrm{i} k^l x)=\sum_{|j|\geq 3} B_{j}^l \alpha_{1}^{j_1}\alpha_{2}^{j_2}\alpha_{3}^{j_3}\alpha_{4}^{j_4}\exp( \mathrm{i} ( k^1 j_1+ \cdots + k^{4}j_{4})\cdot x).
\end{split}
\end{equation}
with $j=(j_1,\cdots,j_4)\in \mathbb{N}^4,\,|j|=|j_1|+\cdots|j_4|$. It sufficies to solve the equation for $\alpha$:
\begin{equation}\label{B2-dim1}
\begin{split}
\epsilon\alpha_l =\sum_{|j|\geq 3\atop k^1 j_1+ \cdots + k^{4}j_{4}=k^l} B_{j}^l \alpha_{1}^{j_1}\alpha_{2}^{j_2}\alpha_{3}^{j_3}\alpha_{4}^{j_4}:=B(\alpha),\,\,l=1,\cdots, 4,
\end{split}
\end{equation}
where $B(\alpha)$ satisfies the following proposition.
\begin{proposition}\label{Balpha}
	For $2$-dimensional case, any term in $B(\alpha)$ defined in \eqref{B2-dim1}:

	\begin{enumerate}
		\item  contains a factor $\alpha_l$.
		
		\item the other terms are powers of $|\alpha_1|^2,\,|\alpha_2|^2$.
		
	\end{enumerate}
	
\end{proposition}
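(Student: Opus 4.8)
The plan is to read off the two claimed structural features directly from the monomial form \eqref{goodform} established in the proof of Lemma~\ref{noeven}, together with the two reality constraints $k^1 = -k^4$, $k^2 = -k^3$ (hence $\alpha_4 = \alpha_1^*$, $\alpha_3 = \alpha_2^*$), and the resonance condition $k^1 j_1 + \cdots + k^4 j_4 = k^l$ that selects which monomials survive. By Proposition~\ref{goodfunction}, every term of $\Pi_K(\hv_\alpha + \ov_\alpha)^2$ is of the form \eqref{goodform}, so every surviving monomial in $B(\alpha)$ is $B_j^l\,\alpha_1^{j_1}\alpha_2^{j_2}\alpha_3^{j_3}\alpha_4^{j_4}$ with $k^1 j_1 + \cdots + k^4 j_4 = k^l$. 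Writing $k^1 = (a,b)$, $k^2 = (a,-b)$, $k^3 = -k^2 = (-a,b)$, $k^4 = -k^1 = (-a,-b)$, the resonance condition in the two coordinates becomes
\begin{equation}\label{eq:resonance-coords}
a(j_1 + j_2 - j_3 - j_4) = \pm a, \qquad b(j_1 - j_2 + j_3 - j_4) = \pm b,
\end{equation}
where the signs on the right are those of $k^l$. Since $a,b \neq 0$, this says $j_1 + j_2 - j_3 - j_4 \in \{\pm 1\}$ and $j_1 - j_2 + j_3 - j_4 \in \{\pm 1\}$.

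First I would prove (1). Fix $l$ and suppose, say, $l = 1$, so $k^l = (a,b)$ and the two displayed quantities in \eqref{eq:resonance-coords} both equal $+1$. Adding the two equations gives $2(j_1 - j_4) = 2$, i.e. $j_1 - j_4 = 1$; in particular $j_1 \ge 1$, so the monomial contains a factor $\alpha_1 = \alpha_l$. Subtracting gives $2(j_2 - j_3) = 0$, i.e. $j_2 = j_3$. The same computation in each of the four cases $l = 1,2,3,4$ — each fixing a sign pattern for the right-hand sides of \eqref{eq:resonance-coords} — yields $j_l - j_{\bar l} = 1$ (where $\bar l$ is the index with $k^{\bar l} = -k^l$) and $j_m = j_{\bar m}$ for the other conjugate pair $\{m,\bar m\}$. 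In every case $j_l \ge 1$, establishing (1).

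Next, (2). Continuing the case $l = 1$: we have shown $j_1 = j_4 + 1$ and $j_2 = j_3$. Hence
\begin{equation}\label{eq:factored}
\alpha_1^{j_1}\alpha_2^{j_2}\alpha_3^{j_3}\alpha_4^{j_4}
= \alpha_1 \cdot (\alpha_1 \alpha_4)^{j_4} (\alpha_2 \alpha_3)^{j_3}
= \alpha_1 \, |\alpha_1|^{2 j_4}\, |\alpha_2|^{2 j_3},
\end{equation}
using $\alpha_4 = \alpha_1^*$ and $\alpha_3 = \alpha_2^*$, so that apart from the factor $\alpha_l$ the monomial is a power of $|\alpha_1|^2$ times a power of $|\alpha_2|^2$. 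The analogous factorization holds for $l = 2,3,4$ by symmetry, which proves (2). It is worth noting that Lemma~\ref{noeven} is consistent with this: $|j| = j_1 + j_2 + j_3 + j_4 = 2j_4 + 1 + 2j_3$ is automatically odd, and the constraint $|j| \ge 3$ from \eqref{B2-dim1} just says $(j_3, j_4) \neq (0,0)$, i.e. at least one genuine $|\alpha_i|^2$ factor is present.

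The content here is entirely bookkeeping, so there is no serious obstacle; the one point requiring a little care is the case analysis over $l$ and the two independent sign choices in \eqref{eq:resonance-coords}, making sure that in each of the (at most) four relevant sign combinations one really does extract the factor $\alpha_l$ corresponding to that particular $l$ and that the remaining exponents pair up into conjugate products. One should also double-check the edge possibility that the two right-hand signs in \eqref{eq:resonance-coords} are chosen inconsistently with any actual $k^l \in \mathcal{K}$ (there are four $k^l$ and four sign patterns, and they match up bijectively), so no spurious cases arise. Once this is laid out, both assertions of Proposition~\ref{Balpha} follow immediately from \eqref{eq:factored} and its analogues.
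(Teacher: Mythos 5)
Your proposal is correct and follows essentially the same route as the paper's own proof: writing the resonance condition $k^1 j_1+\cdots+k^4 j_4=k^l$ componentwise, adding and subtracting the two coordinate equations to get $j_l=j_{\bar l}+1$ and equality of the other conjugate pair, and then factoring via $\alpha_4=\alpha_1^*$, $\alpha_3=\alpha_2^*$ to exhibit the monomial as $\alpha_l$ times powers of $|\alpha_1|^2$ and $|\alpha_2|^2$. The only cosmetic difference is that the paper reduces to the single case $k^l=(+,+)$ by choosing the signs of $a,b$, while you run the four sign patterns explicitly, which changes nothing of substance.
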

\begin{proof}
	It follows from \eqref{wave} that the wave vectors $k^l=(\pm a,\,\pm b)$  with $\Upsilon_{k^l}=0$. 	For simplicity, we denote $k^l$ using the sign only, i.e. $k^1=(+,\,+),\,k^2=(+,\,-),\,k^3=(-,\,+),\,k^4=(-,\,-)$.
	It suffices to prove the case of $k^1=(+,\,+)$
(by defining $a,b$ to have the appropriate sign).

 From $ k^1 j_1+ \cdots + k^{4}j_{4}=(+,\,+)$, one has
	\begin{equation*}
	\left\{
	\begin{array}{l}
	(j_1-j_4)+(j_2-j_3)=1,\\ \\
	(j_1-j_4)+(j_3-j_2)=1.
	\end{array}
	\right.
	\end{equation*}
	This indicates $j_1=j_4+1,\,j_2=j_3$. Therefore, when $l=1$, \[B(\alpha)=\sum_{j_3+j_4\geq 1} B_{j}^1 \alpha_{1}^{j_4+1}\alpha_{2}^{j_3}\alpha_{3}^{j_3}\alpha_{4}^{j_4}
	=\alpha_{1}	\sum_{j_3+j_4\geq 1} B_{j}^1 (|\alpha_{1}|^2)^{j_4}(|\alpha_{2}|^2)^{j_3}.\]
	This concludes our results.
\end{proof}

Proposition~\ref{Balpha} gives that  the bifurcation equation \eqref{B2-dim1} can be represented as:
\begin{equation}\label{fulleq}
\epsilon I=Mz+P(z),\,\,z=(|\alpha_{1}|^2,|\alpha_2|^2)^{T},\,I=(1,1)^{T},
\end{equation}
where $M$ is a $2\times 2$ matrix (will be given later) and $P$ is a homogeneous polynomial of degree 2 or higher
and we have introduced the typographical simplification
$\epsilon = \frak{m} - \frak{m}_0$.

 Note that, introducing $\widetilde{z}=\epsilon z$, the equation \eqref{fulleq} can be rewritten as
\begin{equation}\label{fulleq1}
 I=M\widetilde{z}+\epsilon \widetilde{P}(\widetilde{z},\epsilon),
\end{equation}
where $\widetilde{P}(\widetilde{z},\epsilon)=\epsilon^{-1}P(\epsilon\widetilde{z})$. Since $P$ vanishes to order $2$,
we have that $\tilde P$ is an analytic function.

The formulation of the equation as \eqref{fulleq1} makes it
clear that we can use the implicit function theorem.
We will show that, for a set of $\nu$ of
full measure (see Proposition~\ref{dernonzero} in the following) we have that the matrix $M$ is invertible,
it follows from the implicit function theorem that there exists a solution $z=z(\epsilon)$ next to zero for the equation \eqref{fulleq}.

Note, however that the problem we have is not just to find solutions
$z$ of \eqref{fulleq}. Since the meaning of the components of
$z$ are squares of  modulus, we need that both of them are positive.
Of course, we could consider choosing the sign of $\epsilon$,
but it is non-trivial than choosing one sign of $\epsilon$.
We can ensure that both components of the solution have
a positive sign.

Hence,  we will show in Proposition~\ref{samesign}
 that the components of $M^{-1}I$ has the same sign. Precisely, when the components are positive, for $\epsilon>0$, we interpret the solutions $z$ for the equation \eqref{fulleq} as the absolute values of two complex numbers. When the components are negative, we get solutions $z$ for $\epsilon<0$.
We note that the condition is an explicit condition, on the vectors
in the kernel, was well as $\nu, \frak{m}$.

\smallskip

The remaining task is to give the formula of the matrix $M$ and prove it is invertible in a set of $\nu$ of  full measure and that the solutions of \eqref{fulleq} have both components positive for small values $\epsilon$.
\smallskip

By observation we find that when $d=2,$ the following two statements hold:
for $p,q,j=1,\cdots,4,$

$(S1):$ If $\exp\{\mathrm{i}k^px\},\exp\{\mathrm{i}k^qx\}\in Ker,$ then $\exp\{\mathrm{i}(k^p+k^q)\}$ is in the range space.

$(S2):$ If $\exp\{\mathrm{i}k^px\}, \exp\{\mathrm{i}k^qx\}, \exp\{\mathrm{i}k^jx\}\in Ker$ and $\exp\{\mathrm{i}(k^p+k^q+k^j)x\}\in Ker,$ then two of $k^p, k^q, k^j$ are opposite.
\smallskip

Consider  the range equation  \eqref{R}. It follows from the fact $\hv$ should be a quadratic function of $\ov,$ i.e., $\hv=O(\ov^2)$ that
the  equation  \eqref{R} becomes:
\begin{equation*}
(\LLo+(\mathfrak{m}-\mathfrak{m}_{0}))\hv=\ov^2+ O(\ov^3).
\end{equation*}
Since we assume $\mathfrak{m}-\mathfrak{m}_{0}=O(\epsilon), $ we have
\begin{equation*}
\LLo\hv=\ov^2+ O(\ov^3).
\end{equation*}
As a consequence,
$\hv=O(\ov^2)$ has  the form:
\begin{equation*}
\begin{split}
\hv&=\sum_{p,q=1}^{4}\frac{\alpha_{p}\alpha_{q}}{\Upsilon_{k^p+k^q}}
\exp\{\mathrm{i}(k^p+k^q)x\}+ O(|\alpha_p|+|\alpha_q|)^{3}\\
&=\frac{\alpha_{1}^2\exp\{2\mathrm{i}k^1x\}}{\Upsilon_{(2a,2b)}}+\frac{\alpha_{2}^2\exp\{2\mathrm{i}k^2x\}}{\Upsilon_{(2a,-2b)}}+\frac{\alpha_{3}^2\exp\{2\mathrm{i}k^3x\}}{\Upsilon_{(-2a,2b)}}+\frac{\alpha_{4}^2\exp\{2\mathrm{i}k^4x\}}{\Upsilon_{(-2a,-2b)}}\\
&\ \ \ + \frac{2\alpha_{1}\alpha_{2}\exp\{\mathrm{i}(k^1+k^2)x\}}{\Upsilon_{(2a,0)}}+\frac{2\alpha_{1}\alpha_{3}\exp\{\mathrm{i}(k^1+k^3)x\}}{\Upsilon_{(0,2b)}}\\
&\ \ \ +\frac{2\alpha_{2}\alpha_{4}\exp\{\mathrm{i}(k^2+k^4)x\}}{\Upsilon_{(0,-2b)}} +\frac{2\alpha_{3}\alpha_{4}\exp\{\mathrm{i}(k^3+k^4)x\}}{\Upsilon_{(-2a,0)}}+\frac{2(\alpha_{1}\alpha_{4}+\alpha_{2}\alpha_{3})}{\Upsilon_{(0,0)}},
\end{split}
\end{equation*}
which is well defined since $\Upsilon_{k^p+k^q}\neq0$ according to $(S1).$

\medskip

Consider the bifurcation eqaution \eqref{B}. Combining  $(S1)$ and $(S2),$ one has
\begin{equation*}\label{pro-2dim}
\begin{split}
 \epsilon\ov&=\Pi_{K}(\ov+\hv)^{2}\\
&=\Pi_{K}(2\ov\hv)+ O(|\alpha_{1}|+|\alpha_{2}|)^{4}\\
&=
\bigg(\frac{2\alpha_1^2\alpha_4}{\Upsilon_{(2a,2b)}}+\frac{4(\alpha_{1}\alpha_{4}+\alpha_{2}\alpha_{3})\alpha_{1}}{\Upsilon_{(0,0)}}+4\alpha_{1}\alpha_{2}\alpha_{3}(\frac{1}{\Upsilon_{(0,2b)}}+\frac{1}{\Upsilon_{(2a,0)}})\bigg)\exp\{\mathrm{i}k^1x\}
\\
&\ \ \ +\bigg(\frac{2\alpha_2^2\alpha_3}{\Upsilon_{(2a,-2b)}}+\frac{4(\alpha_{1}\alpha_{4}+\alpha_{2}\alpha_{3})\alpha_{2}}{\Upsilon_{(0,0)}}+4\alpha_{1}\alpha_{2}\alpha_{4}(\frac{1}{\Upsilon_{(0,-2b)}}+\frac{1}{\Upsilon_{(2a,0)}})\bigg)\exp\{\mathrm{i}k^2x\}\\
&\ \ \ +\bigg(\frac{2\alpha_3^2\alpha_2}{\Upsilon_{(-2a,2b)}}+\frac{4(\alpha_{1}\alpha_{4}+\alpha_{2}\alpha_{3})\alpha_{3}}{\Upsilon_{(0,0)}}+4\alpha_{1}\alpha_{3}\alpha_{4}(\frac{1}{\Upsilon_{(0,2b)}}+\frac{1}{\Upsilon_{(-2a,0)}})\bigg)\exp\{\mathrm{i}k^3x\}\\
&\ \ \ +\bigg(\frac{2\alpha_4^2\alpha_1}{\Upsilon_{(-2a,-2b)}}+\frac{4(\alpha_{1}\alpha_{4}+\alpha_{2}\alpha_{3})\alpha_{4}}{\Upsilon_{(0,0)}}+4\alpha_{2}\alpha_{3}\alpha_{4}(\frac{1}{\Upsilon_{(0,-2b)}}+\frac{1}{\Upsilon_{(-2a,0)}})\bigg)\exp\{\mathrm{i}k^4x\}\\
&\ \ \ + O(|\alpha_{1}|+|\alpha_{2}|)^{4}\\
&=\alpha_1\bigg(|\alpha_1|^2A+
|\alpha_2|^2B \bigg)\exp\{\mathrm{i}k^1x\}
+\alpha_2\bigg(|\alpha_2|^2A+
|\alpha_1|^2B\bigg)\exp\{\mathrm{i}k^2x\}\\
&\ \ \
+\alpha_3\bigg(|\alpha_3|^2A+
|\alpha_4|^2B\bigg)\exp\{\mathrm{i}k^3x\}
+\alpha_4\bigg(|\alpha_4|^2A+
|\alpha_3|^2B\bigg)\exp\{\mathrm{i}k^4x\}\\
&\ \ \ + O(|\alpha_{1}|+|\alpha_{2}|)^{4},
\end{split}
\end{equation*}
where
\begin{equation}
	\begin{split}
	&A=\frac{2}{\Upsilon_{(2a,2b)}}+\frac{4}{\Upsilon_{(0,0)}},\\
	&B=\frac{4}{\Upsilon_{(2a,0)}}+\frac{4}{\Upsilon_{(0,2b)}}+\frac{4}{\Upsilon_{(0,0)}}.
	\end{split}
\label{AB}
\end{equation}

Then, the linear part of the equation  \eqref{fulleq} for the non-zero $|\alpha_{1}|^2,\,|\alpha_{2}|^2$, is the following factorized equation;
\begin{equation*}
\left\{
\begin{array}{l}
\epsilon=|\alpha_1|^2A+
|\alpha_2|^2B,\\ \\
\epsilon=|\alpha_1|^2B+
|\alpha_2|^2A.
\end{array}
\right.
\end{equation*}
We denote by
 \begin{equation}\label{matrix}
 	\begin{split}
  M=\left(
  \begin{array}{l}
  A \ \ B\\
  B \ \  A
  \end{array}
  \right).
 	\end{split}
 \end{equation}
Indeed, $M$ is invertible in a  full measure set of $\nu$. See the following Proposition.
\begin{proposition}\label{dernonzero}
The determant of the matrix $M$ defined in \eqref{matrix} is different from zero for  a set of  $\nu$ of full measure.
\end{proposition}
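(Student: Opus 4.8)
The plan is to compute $\det M = A^2 - B^2 = (A-B)(A+B)$ explicitly as a rational function of $\nu_1, \nu_2$ (and the fixed $\mathfrak{m}_0$, which is itself a function of $\nu$ through the resonance relation $a^2\nu_1^2 + b^2\nu_2^2 = \mathfrak{m}_0$), and then argue that this rational function is not identically zero, so its zero set has Lebesgue measure zero. First I would substitute the defining formulas \eqref{AB} for $A$ and $B$, recalling that
\[
\Upsilon_{(2a,2b)} = -(4a^2\nu_1^2 + 4b^2\nu_2^2) + \mathfrak{m}_0 = -3\mathfrak{m}_0, \quad \Upsilon_{(0,0)} = \mathfrak{m}_0,
\]
\[
\Upsilon_{(2a,0)} = -4a^2\nu_1^2 + \mathfrak{m}_0, \quad \Upsilon_{(0,2b)} = -4b^2\nu_2^2 + \mathfrak{m}_0,
\]
where I used the resonance relation to simplify $\Upsilon_{(2a,2b)}$. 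This already gives $A = -\tfrac{2}{3\mathfrak{m}_0} + \tfrac{4}{\mathfrak{m}_0} = \tfrac{10}{3\mathfrak{m}_0}$, a nonzero constant (independent of the finer structure), and $B = \tfrac{4}{\mathfrak{m}_0 - 4a^2\nu_1^2} + \tfrac{4}{\mathfrak{m}_0 - 4b^2\nu_2^2} + \tfrac{4}{\mathfrak{m}_0}$.

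The second step is to observe that $\det M = 0$ if and only if $A = B$ or $A = -B$, i.e. $B = \pm \tfrac{10}{3\mathfrak{m}_0}$. Each of these is a single algebraic equation in the variables $(\nu_1, \nu_2)$ constrained to the resonance curve; clearing denominators, it becomes a polynomial identity that, if it held on a set of positive measure, would have to hold identically on the (one-dimensional, but still positive-measure in the appropriate parametrization) resonance manifold, hence identically as polynomials. To rule this out I would exhibit a single admissible choice of parameters at which $\det M \ne 0$: for instance, fix $a = b = 1$, let $\nu_1$ vary and set $\mathfrak{m}_0 = \nu_1^2 + \nu_2^2$, and check that $B \to \tfrac{12}{\mathfrak{m}_0} \ne \pm\tfrac{10}{3\mathfrak{m}_0}$ fails only on the degenerate locus $\nu_1 = \nu_2$ where $\Upsilon_{(2a,0)} = \Upsilon_{(0,2b)}$; away from $\nu_1 = \nu_2$ the two terms $\tfrac{4}{\mathfrak{m}_0 - 4\nu_1^2}$ and $\tfrac{4}{\mathfrak{m}_0 - 4\nu_2^2}$ are generically unequal and their sum sweeps an interval, so $B = \pm\tfrac{10}{3\mathfrak{m}_0}$ picks out only finitely many values of $\nu_1$ for each $\nu_2$.

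More cleanly, I would phrase the conclusion as: the function $(\nu_1,\nu_2,\mathfrak{m}_0) \mapsto \det M$, restricted to the semialgebraic set $\{a^2\nu_1^2 + b^2\nu_2^2 = \mathfrak{m}_0\} \cap ([1,2]^2 \times \RR_+)$ with all $\Upsilon$'s nonzero, is a nonconstant real-analytic (indeed rational) function, because we have just computed one point where it takes a value different from a point where it takes another value; a nonconstant real-analytic function on a connected analytic manifold has zero set of measure zero. Combined with the measure-zero exceptional sets already excluded (Assumption~\ref{simplicity} and the resonant set $I$), this gives the claim. The main obstacle I anticipate is purely bookkeeping: correctly enumerating which $\Upsilon_{k^p+k^q}$ appear and using the resonance relation to reduce $\Upsilon_{(2a,2b)}$, $\Upsilon_{(2a,-2b)}$, etc. to simple expressions, so that $A$ and $B$ are pinned down exactly; once that is done, the non-vanishing of $(A-B)(A+B)$ on a full-measure set is essentially immediate from the analyticity argument. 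There is no deep difficulty — the content is the explicit computation plus the standard fact that zero sets of nontrivial analytic functions are null.
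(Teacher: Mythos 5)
Your proposal takes essentially the same route as the paper: factor $\det M = (A+B)(A-B)$, recognize $A\pm B$ as rational functions of $\nu$, and conclude the zero set is Lebesgue null. You actually go one useful step further than the paper, which merely asserts without computation that "the numerators of $A\pm B$ have a non-trivial term": by substituting the resonance relation $a^2\nu_1^2+b^2\nu_2^2=\mathfrak{m}_0$ you pin down $\Upsilon_{(2a,2b)}=-3\mathfrak{m}_0$ and hence $A=\tfrac{10}{3\mathfrak{m}_0}$, a constant along the resonance curve, so non-vanishing of $A\pm B$ reduces to the observation that $B$ is non-constant there --- a cleaner and more verifiable version of the paper's claim. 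One small arithmetic slip in the side remark: with $a=b=1$ and $\nu_1=\nu_2$ one has $\Upsilon_{(2,0)}=\Upsilon_{(0,2)}=\mathfrak{m}_0-4\nu_1^2=-\mathfrak{m}_0$, giving $B=-\tfrac{4}{\mathfrak{m}_0}$ rather than $\tfrac{12}{\mathfrak{m}_0}$; this does not affect the argument (indeed $A\pm B\ne 0$ there too), and in any case that locus is already excluded by Assumption~\ref{simplicity}. You also omit the paper's closing remark that the excluded null set must be taken as a countable union over all resonant $k$, but that is a routine addendum.
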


\begin{proof}
	Since
	\[ \det(M)=A^2-B^2=(A+B)\cdot(A-B),\]	
 it suffices to consider  $A\pm B$. If we consider $A\pm B$ as a function of $\nu$, it  is  a rational function.

It is not difficult to compute the numerators of $A\pm B$ and
to check that they have a non-trivial term.   So, both
$A\pm B$ are non-trivial rational functions of $\nu$. Therefore,
they can vanish only on a set of $\nu$ of measure zero.  This set is
the set alluded to in the hypothesis of Theorem~\ref{thm:bifurcation}.

Note that the set of $\nu$ for which $A\pm B$ vanish depends on
$k$ and if we fix $k$ this is
the only set we need to exclude for this $k$. Since the set of $k$ is countable
we can exclude a set of $\nu$  for all the $k$.
\end{proof}

\begin{proposition}\label{samesign}
With the notations above, we have  that both components of $M^{-1} I$ have the same sign.
\end{proposition}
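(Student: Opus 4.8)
The plan is to compute $M^{-1}I$ explicitly from the $2\times 2$ structure and reduce the sign claim to a statement about $A$ and $B$ alone. Since
\[
M = \begin{pmatrix} A & B \\ B & A \end{pmatrix},
\qquad M^{-1} = \frac{1}{A^2-B^2}\begin{pmatrix} A & -B \\ -B & A \end{pmatrix},
\]
we get $M^{-1}I = \frac{1}{(A+B)(A-B)}(A-B, A-B)^{T} = \frac{1}{A+B}(1,1)^{T}$. Thus both components of $M^{-1}I$ are equal to $(A+B)^{-1}$, hence automatically of the same sign; in fact the shared sign is exactly $\sigma = \operatorname{sign}(A+B)$, which is the quantity named in Theorem~\ref{thm:bifurcation}. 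So the first key step is this algebraic simplification, and it already yields the ``same sign'' conclusion. By Proposition~\ref{dernonzero}, for $\nu$ outside a set of measure zero we have $A+B\neq 0$, so the expression is well defined.

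The second step, which is what makes the statement useful for the bifurcation analysis, is to identify which sign $\sigma$ is, i.e. to say something about the sign of $A+B$ in terms of the data $\nu,\frak{m}_0$ and the kernel wave vector $(a,b)$. Using \eqref{AB},
\[
A+B = \frac{2}{\Upsilon_{(2a,2b)}} + \frac{4}{\Upsilon_{(2a,0)}} + \frac{4}{\Upsilon_{(0,2b)}} + \frac{8}{\Upsilon_{(0,0)}}.
\]
Here $\Upsilon_{(0,0)} = \frak{m}_0 > 0$, so the last term is positive, while $\Upsilon_{(2a,2b)} = -4(a^2\nu_1^2+b^2\nu_2^2) + \frak{m}_0 = -4\frak{m}_0 + \frak{m}_0 = -3\frak{m}_0 < 0$ (using the resonance relation $a^2\nu_1^2+b^2\nu_2^2 = \frak{m}_0$), and $\Upsilon_{(2a,0)} = -4a^2\nu_1^2 + \frak{m}_0 = \frak{m}_0 - 4a^2\nu_1^2$, $\Upsilon_{(0,2b)} = \frak{m}_0 - 4b^2\nu_2^2$, both of which are negative since $a^2\nu_1^2, b^2\nu_2^2 > 0$ forces each of $4a^2\nu_1^2, 4b^2\nu_2^2 > \frak{m}_0$ unless one of $a,b$ were to dominate — more carefully, from $a^2\nu_1^2 + b^2\nu_2^2 = \frak{m}_0$ one only gets $4a^2\nu_1^2 < 4\frak{m}_0$, so the sign of $\Upsilon_{(2a,0)}$ is not fixed in general and depends on whether $a^2\nu_1^2 \gtrless \frak{m}_0/4$. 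Hence I would state the sign $\sigma$ only as the (explicitly computable, $\nu$-dependent) sign of the displayed rational expression, and note that in particular cases (e.g. $\frak{m}_0$ small relative to the resonant mode) it can be pinned down.

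I expect the main obstacle to be purely expository rather than mathematical: the proposition as literally stated (``both components have the same sign'') is an immediate consequence of the symmetric $2\times2$ structure, so the real content one wants is the determination of $\sigma$ and the verification that $\sigma \neq 0$ generically — the latter being precisely Proposition~\ref{dernonzero} applied to the numerator of $A+B$. I would therefore write the proof as: (i) invert $M$ and observe $M^{-1}I = (A+B)^{-1}(1,1)^T$; (ii) invoke Proposition~\ref{dernonzero} to ensure $A+B \neq 0$ off a measure-zero set; (iii) conclude the common sign is $\sigma = \operatorname{sign}(A+B)$, computed from \eqref{AB} with $\Upsilon_{(0,0)} = \frak{m}_0$ and $\Upsilon_{(2a,2b)} = -3\frak{m}_0$. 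One subtlety to flag: for the solutions $z = \widetilde z/\epsilon$ of \eqref{fulleq} to be genuine squared moduli, one needs $\epsilon = \frak{m}-\frak{m}_0$ to have the same sign as the (common) sign of $M^{-1}I$, which is where $\sigma$ re-enters the statement of Theorem~\ref{thm:bifurcation}.
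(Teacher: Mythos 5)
Your proposal is correct and takes essentially the same approach as the paper: invert the symmetric $2\times2$ matrix to get $M^{-1}I=(A+B)^{-1}(1,1)^T$, observe the two components are identical (hence trivially of the same sign), invoke Proposition~\ref{dernonzero} to ensure $A+B\neq 0$, and note that $\epsilon$ must be chosen with sign $\sigma=\operatorname{sign}(A+B)$ so that $z$ has positive entries. Your extra observation that the sign of $\Upsilon_{(2a,0)}$ and $\Upsilon_{(0,2b)}$ is not determined in general (only $\Upsilon_{(0,0)}=\frak m_0>0$ and $\Upsilon_{(2a,2b)}=-3\frak m_0<0$ are fixed by the resonance relation) is a correct and useful clarification of why the paper leaves $\sigma$ as an unspecified sign rather than pinning it down.
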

\begin{proof}
We have
	 \begin{equation*}
	\begin{split}
		M^{-1} I =\frac{1}{\det (M)}\left(
	\begin{array}{l}
	A \ \ -B\\
	-B \ \ \ A
	\end{array}
	\right)I=\frac{1}{A^2-B^2}\left(
	\begin{array}{l}
	A-B\\
	A-B
	\end{array}
	\right)=\frac{1}{A+B}\left(
	\begin{array}{l}
	1\\
    1
	\end{array}
	\right).
	\end{split}
	\end{equation*}

We recall that we have included in our assumptions that
the parameters $\nu$ are such that $A+B$ is not zero, so that the leading
term of the solution has a definitive sign.  Both components of
the leading solution
have the same sign (they are identical)
 and we can choose $\epsilon$ having the same sign with $A+B$ (this is the $\sigma$ included in Theorem~\ref{thm:bifurcation}), so that the solutions for the equation \eqref{fulleq} are
positive.

Once we know that the leading approximation is positive, the implicit
function theorem tells us we can choose a family $z$, which remains
positive for $\epsilon$ such that $|\epsilon|$ is small enough and
the sign of $\epsilon$ is $\sigma$.

	\end{proof}
\subsubsection{The bifurcation equation for $d =1$.}
The case of $d=1$ is much easier since  Proposition ~\ref{Balpha} and Propoaition~\ref{dernonzero} are easier to be proved for $d=1$.

It is obvious that the kernal space of the operator $\LLo$ is 2-dimensional, which implies
that the kernal space of the operator $\LLo$ can be represented as the following :
\begin{equation}\label{ker}
\begin{split}
Ker:=\{\ov_{\alpha}:\mathbb{T}_{\rho}\rightarrow\mathbb{C}\,\big|\, &\ov_{\alpha}(x)=\alpha_{1}\exp\{\mathrm{i}k^1x\}+\alpha_{2}\exp\{\mathrm{i}k^2x\},
\alpha_{1},\alpha_{2}\in\mathbb{C},\\
&and \,\, k^{j}\in\mathbb{Z}\ \  satisfying \,\, \Upsilon_{k^j}=0,\, j=1,2.\}
\end{split}
\end{equation}
Note that $k^{1}=-k^{2},$ then for real functions, one has $\alpha_{1}=\alpha_{2}^{*}.$

First, by the range equation \eqref{R} , $\hv=O(\ov^2)$ has the form of
\begin{equation}\label{range}
\hv=
\alpha_{1}^{2}\exp\{\mathrm{i}(2k^1x)\}\Upsilon_{2k^1}^{-1}+
\alpha_{2}^{2}\exp\{\mathrm{i}(2k^2x)\}\Upsilon_{2k^2}^{-1}+
2\alpha_{1}\alpha_{2}\Upsilon_{0}^{-1}+ O(|\alpha_{1}|+|\alpha_2|)^3,
\end{equation}
which is well defined since $\Upsilon_{2k^1}, \Upsilon_{2k^2}, \Upsilon_{0}\neq0.$

In order to obtain the coefficients $\alpha_1, \alpha_2,$ we concentrate on  the bifurcation equation  \eqref{B}.
Combing with \eqref{ker} and \eqref{range}, one has
\begin{equation*}
\begin{split}
\epsilon\ov&=\Pi_{K}(\ov+\hv)^{2}\\
&=\Pi_{K}(2\ov\hv)+ O(|\alpha_{1}|+|\alpha_{2}|)^{4}\\
&=
(\frac{2\alpha_1^2\alpha_2}{\Upsilon_{2k^{1}}}+\frac{4\alpha_1^2\alpha_2}{\Upsilon_0})\exp\{\mathrm{i}k^1x\}
+(\frac{2\alpha_1\alpha_2^2}{\Upsilon_{2k^{2}}}+\frac{4\alpha_1\alpha_2^2}{\Upsilon_0})\exp\{\mathrm{i}k^2x\}+ O(|\alpha_{1}|+|\alpha_{2}|)^{4}\\
&=2|\alpha_1|^2\alpha_1\bigg(\frac{1}{\Upsilon_{2k^1}}+\frac{2}{\Upsilon_0}\bigg)\exp\{\mathrm{i}k^1x\}+
2|\alpha_1|^2\alpha_2\bigg(\frac{1}{\Upsilon_{2k^2}}+\frac{2}{\Upsilon_0}\bigg)\exp\{\mathrm{i}k^2x\}\\
&\quad+ O(|\alpha_{1}|+|\alpha_{2}|)^{4}.
\end{split}
\end{equation*}

For nonzero $\alpha_{1}$,
\begin{equation}\label{M}
M=2(\frac{1}{\Upsilon_{2k^{1}}}+\frac{2}{\Upsilon_0})=\frac{5}{3m_0}> 0.
\end{equation}

Using Lemma~\ref{noeven}
and  noting that Proposition~\ref{Balpha} applies also
to the case $d=1$, we obtain that the bifurcation equation \eqref{fulleq} can be written
as
\[
\ep \alpha = \alpha P(| \alpha|^2)
\]
with $P$ an analytic function and $P(0) = 0$.

In the previous analysis, we have computed $P'(0)=M$ and,
in particular shown  that $P'(0) \ne 0$ by \eqref{M}. Hence, we can define a local
inverse for $P$ and the branches are given by
$ |\alpha|^2 = P^{-1}(\ep) = P'(0)^{-1} \ep + O(\ep^2)$.
 Note that, since $|\alpha|^2 \ge 0 $, we only obtain solutions for
$\ep$ with a fixed positive sign.

Notice that the fact that the bifurcation equations determines
only  $|\alpha| $ and not the phase is consistent with
Remark~\ref{symmetry}.

\subsubsection{Some remarks about the case $d \ge  3$}
Unfortunately, when the dimension is bigger,  the algebra
becomes more complicated.

Notably the factorization of the bifurcation equation does
not hold.
Note
\[
(a,b,c) = (a, b, -c) + (a, -b, c) + (-a,b,c)
\]
so that the bifurcation equation for $\alpha_1$ contain a
term which does not have a factor $\alpha_1$.  There are many other
examples.

\begin{remark}
	In 3 or higher dimensions, the statement $(S2)$ is not true and the only thing we can say, at the moment, about the bifurcation equations is that they have the form
	\[
	\epsilon \alpha_n=P_n(\alpha),
	\]
	where $P_n$ is a homogeneous polynomial of degree 3 or higer order with real coefficients.
	
	In dimension $d\geq3,$ the bifurcation equations include $2^d$ real variables ($2^{d-1}$ complex variables). The symmetry in Remark \ref{symmetry} shows that solutions have to be related in $d$-dimensional families. When $d\geq3,$ $2^{d-1}>d.$ So that the bifurcation equations is expected to give
more branches. Also the branches are not just charaterized by the
absolute values since there are more variables than phases
to adjust using Remark~\ref{symmetry}.  Of course, it is  possible
that there are other symmetries beyond the ones pointed out in
Remark~\ref{symmetry}.
\end{remark}

\section{Ill-posed Evolution equations}\label{illposedcase}

\subsection{A fixed point approach}
A very similar approach in Section \ref{nonresonant} can be applied to the problem of finding solutions for the nonlinear elliptic type evolution equations \eqref{evolution}.

By the analysis in Section \ref{mainidea}, the problem is equivalent to looking for solutions of the form $U(\theta,x): \mathbb{T}_{\rho}^{b}\times\mathbb{T}_{\rho}^{d}\rightarrow \mathbb{C}$ for \eqref{evolution1}:
\begin{equation}\label{pde}
\begin{split}
\mathcal{Q}_{\omega,\nu,\frak{m}}U=\epsilon \mathcal{N}(U),
\end{split}
\end{equation}
where
\begin{equation}\label{nonlinearitypde}
\begin{split}
\mathcal{N}(U)(\theta,x):=f(\theta,x,U(\theta,x),D_xU(\theta,x),D^2_xU(\theta,x)).
\end{split}
\end{equation}
Moreover, we assume the parameters $\omega, \nu,\frak{m}$ meet the hypothesis (H1) or (H2) mentioned in Section \ref{illfixed}.

As in the previous analysis, the key is to define an
appropriate space.

For $\rho\geq0, r, b, d\in \mathbb{Z}_{+},$
we define the following space of analytic functions $U(\theta,x)$ in $\mathbb{T}_{\rho}^{b+d}$ with finite norm:
\begin{equation*}
\begin{split}
H_*^{\rho,r}:
&=H^{\rho,r}_* (\mathbb{T}^{b+d}_{\rho})\\
&=
\bigg\{
U: \mathbb{T}_{\rho}^{b}\times\mathbb{T}_{\rho}^{d}\rightarrow\mathbb{C}
\,\,\bigg|\,\,
U(\theta,x)=\sum_{(l,k)\in\mathbb{Z}^{b}\times\mathbb{Z}^{d}}\hat{U}_{l,k}e^{\mathrm{i}(l\theta+kx)},\\
&\quad\quad
\|U\|_{\rho,r}^{2}=\sum_{(l,k)\in\mathbb{Z}^{b}\times\mathbb{Z}^{d}}|\hat{U}_{l,k}|^{2}e^{2\rho(|l|+|k|)}(1+|l|^{2})^r( 1 + |k|^{2})^{r}<\infty
\bigg\}.
\end{split}
\end{equation*}

\begin{remark}
It is natural to think of $H^{\rho, r}_*$
as a space of functions from $\torus^b_\rho$ into $H^{\rho, r}(\torus^d_\rho)$.
We think of $U(\omega t, \cdot)$ as a  quasi-periodic function in the
space $H^{\rho, r}$ of functions of $\torus^d_\rho$.  From this point of
view, it would have been natural to include different parameters
for the regularity in $\theta$ and the regularity in $x$, but
we have decided not to include it to avoid creating more complexity.

Note that the norm is equivalent to the norm
\[
  \|U\|_{\rho,r}^{2}=\sum_{(l,k)\in\mathbb{Z}^{b}\times\mathbb{Z}^{d}}|\hat{U}_{l,k}|^{2}e^{2\rho(|l|+|k|)}(1+|l|^{2} + |k|^{2})^{r}<\infty.
\]
\end{remark}

Before giving the main result of the evolution equation, we need to introduce the following lemma about the measure estimates of the parameter sets, which will produce resonance, corresponding to the hypotheses (H1) and (H2) respectively.

\begin{lemma}\label{mes}
Given fixed $\frak{m}>0$ and sufficiently small $\delta>0,$ we consider the following set of parameters $(\omega, \nu)$,
\begin{equation*}
\begin{split}
\tilde{I}=
\bigg\{
(\omega,\nu)\in[1,2]^{1+d}
\,\,\big|\,\,\exists\,\,l\in\mathbb{Z}^{1},\,\, k\in\mathbb{Z}^{d},\,\, such\,\, that\,
\left|-\omega^{2}l^{2}-\sum_{i=1}^{d}k_{i}^{2}\nu_{i}^{2}+\frak{m}\right|\leq\delta
\bigg\},
\end{split}
\end{equation*}
corresponding to the hypotheses $\mathrm{(H2)}$ in the Section ~\ref{illfixed} .
Then, the set have Lebesgue measure:
\[
\mathrm{mes}(\tilde{I})=O(\delta).
\]
\end{lemma}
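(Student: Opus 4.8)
The plan is to mirror the proof of Lemma~\ref{mesI}, now with the parameter $(\omega,\nu)$ ranging over the cube $[1,2]^{1+d}$. The key structural observation is that, since $\omega\ge 1$ and $\nu_i\ge 1$, the quadratic form $\omega^2 l^2+\sum_{i=1}^d\nu_i^2 k_i^2$ dominates $l^2+\sum_i k_i^2$; hence if $\bigl|-\omega^2 l^2-\sum_{i=1}^d\nu_i^2 k_i^2+\frak{m}\bigr|\le\delta$ then $l^2+\sum_i k_i^2\le\frak{m}+\delta\le\frak{m}+1$. Consequently only finitely many index pairs $(l,k)\in\mathbb{Z}^{1+d}$ are relevant, the number $N=N(\frak{m},d)$ of them being independent of $\delta$. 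Writing $\tilde I=\bigcup_{(l,k)}\tilde I_{l,k}$, where $\tilde I_{l,k}$ is the slice corresponding to a fixed $(l,k)$, it therefore suffices to bound $\mathrm{mes}(\tilde I_{l,k})$ by $O(\delta)$ uniformly over these finitely many indices and then sum.

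For a fixed $(l,k)\ne 0$, I would set $G_{l,k}(\omega,\nu)=-\omega^2 l^2-\sum_{i=1}^d\nu_i^2 k_i^2+\frak{m}$, so $\tilde I_{l,k}=\{(\omega,\nu)\in[1,2]^{1+d}:|G_{l,k}|\le\delta\}$, and compute $\nabla_{(\omega,\nu)}G_{l,k}=(-2\omega l^2,-2\nu_1 k_1^2,\dots,-2\nu_d k_d^2)$. At least one component has absolute value $\ge 2$ everywhere on $[1,2]^{1+d}$: if $l\ne 0$ then $|\partial_\omega G_{l,k}|=2\omega l^2\ge 2$, and otherwise some $k_i\ne 0$ and $|\partial_{\nu_i}G_{l,k}|=2\nu_i k_i^2\ge 2$. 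Freezing all variables except the distinguished one, the resulting function of a single real variable is strictly monotone with derivative at least $2$ in modulus, so its sublevel set $\{|G_{l,k}|\le\delta\}$ is an interval of length $\le\delta$; integrating over the remaining $d$ variables (whose domain has measure $1$) gives $\mathrm{mes}(\tilde I_{l,k})\le\delta$ by Fubini. Summing over the $N$ relevant indices yields $\mathrm{mes}(\tilde I)\le N\delta=O(\delta)$. The pair $(l,k)=0$ contributes nothing, since then $G_{0,0}=\frak{m}>\delta$ for $\delta$ small.

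There is no serious obstacle here; this is a routine sublevel-set (Fubini) estimate. The only point requiring a moment's care — and the place where the conditions $\omega\ge 1$, $\nu_i\ge 1$ are genuinely used — is the reduction to finitely many $(l,k)$: one must verify that the quadratic form being positive definite with all coefficients bounded below by $1$ both (i) caps the number of resonant indices independently of $\delta$ and (ii) forces $\nabla G_{l,k}$ to be bounded away from $0$ on the entire parameter cube, so that the per-index constant in the sublevel estimate is uniform. Everything else parallels Lemma~\ref{mesI}, and the analogous exclusion for hypothesis (H1) is even simpler since there the multiplier is already negative and bounded away from zero, so no parameters need to be removed.
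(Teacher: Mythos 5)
Your proof is correct and carries out precisely the adaptation of Lemma~\ref{mesI} that the paper refers to but omits: you bound the relevant index set $(l,k)$ using that $\omega,\nu_i\ge 1$, then for each fixed $(l,k)\ne 0$ pick the coordinate along which $|\nabla G_{l,k}|\ge 2$ and run a sublevel-set/Fubini estimate giving $\mathrm{mes}(\tilde I_{l,k})\le\delta$, finally summing over the finitely many indices. This is the same strategy used in the paper's proof of Lemma~\ref{mesI} (there phrased via $\mathrm{mes}(I_n)\le 2\delta/\inf|\nabla F_{k^{(n)}}|_2$), only with the parameter cube enlarged by one dimension; your one-variable freezing argument is if anything a slightly cleaner way to justify the $\le\delta$ bound than the coarea-style inequality the paper invokes.
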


In this case, we can regard the parameters set as the nonresonant elliptic case  where the dimension of $x$ increases by 1. Thus,
the Lemma~\ref{mes} can be obtained by adapting
slightly the proof of Lemma \ref{mesI},
we omit it.

\medskip
Using Proposition \ref{operatornorm}, we obtain the following estimates:
\begin{proposition}
If the parameters $\omega,\nu$ meet the hypotheses (H1) (or (H2)), then for all
$(\omega,\nu)\subset \mathbb{R}^b\times[1,2]^{d}$ (or $(\omega,\nu)\in[1,2]^{1+d}\setminus\tilde{I}$), we have
\begin{equation*}
\begin{array}{l}
\|\Q^{-1}\|_{H_*^{\rho,r}\rightarrow H_*^{\rho,r}}\leq\delta^{-1},\\
\|\Q^{-1}\|_{H_*^{\rho,r-2}\rightarrow H_*^{\rho,r}}<C,
\end{array}
\end{equation*}
where $C$ is a constant.
\end{proposition}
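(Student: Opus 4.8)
The argument is the evolution-equation analogue of the proof of Proposition~\ref{estimatesLLm1}. Since $\Q$ is diagonal in the Fourier basis, $\Q\, e^{\mathrm{i}(l\theta+kx)}=\Upsilon_{l,k}\,e^{\mathrm{i}(l\theta+kx)}$ with $\Upsilon_{l,k}=-\langle\omega,l\rangle^{2}-\sum_{i=1}^{d}\nu_{i}^{2}k_{i}^{2}+\mathfrak{m}$, so as soon as none of the $\Upsilon_{l,k}$ vanishes, $\Q^{-1}$ is the diagonal operator with multipliers $\Upsilon_{l,k}^{-1}$, and the two asserted estimates follow from Proposition~\ref{operatornorm} once we know (i) a uniform lower bound $|\Upsilon_{l,k}|\ge\delta$ and (ii) a quadratic growth bound $|\Upsilon_{l,k}|\ge c_{0}(1+|l|^{2}+|k|^{2})$ for all $(l,k)$ outside a finite set. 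Strictly speaking Proposition~\ref{operatornorm} is stated for the $d$-variable spaces $H^{\rho,r}$, but its proof is a one-line estimate for a weighted $\ell^{2}$ norm and applies verbatim to $H^{\rho,r}_{*}$ with $(1+|k|^{2})$ replaced by $(1+|l|^{2}+|k|^{2})$, using the norm equivalence $(1+|l|^{2})^{r}(1+|k|^{2})^{r}\asymp(1+|l|^{2}+|k|^{2})^{r}$ pointed out after the definition of $H^{\rho,r}_{*}$.

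For (i): under hypothesis (H2) we have $b=1$, so $\Upsilon_{l,k}=-\omega^{2}l^{2}-\sum_{i}\nu_{i}^{2}k_{i}^{2}+\mathfrak{m}$ is precisely the elliptic multiplier of Lemma~\ref{mesI} in $1+d$ ``spatial'' variables with frequency vector $(\omega,\nu)\in[1,2]^{1+d}$; by the very definition of $\tilde I$, the restriction $(\omega,\nu)\in[1,2]^{1+d}\setminus\tilde I$ is exactly the statement that $|\Upsilon_{l,k}|>\delta$ for all $l\in\ZZ$, $k\in\ZZ^{d}$, while $\mathrm{mes}(\tilde I)=O(\delta)$ is Lemma~\ref{mes}. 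Under hypothesis (H1) one has $-\langle\omega,l\rangle^{2}\le 0$ and $-\sum_{i}\nu_{i}^{2}k_{i}^{2}+\mathfrak{m}<0$, hence $\Upsilon_{l,k}<0$ and $|\Upsilon_{l,k}|$ is bounded below by a positive constant depending only on $\mathfrak{m}$ and $\nu$; after possibly shrinking $\delta$ we again obtain $|\Upsilon_{l,k}|\ge\delta$ for every $(l,k)$. In both cases the first part of Proposition~\ref{operatornorm} applied to $\Q^{-1}$ gives $\|\Q^{-1}\|_{H^{\rho,r}_{*}\to H^{\rho,r}_{*}}\le\sup_{l,k}|\Upsilon_{l,k}|^{-1}\le\delta^{-1}$.

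For (ii): since $\nu_{i}\ge 1$, $\sum_{i}\nu_{i}^{2}k_{i}^{2}\ge|k|^{2}$, and in case (H2), $\omega\ge 1$ gives $\langle\omega,l\rangle^{2}=\omega^{2}l^{2}\ge l^{2}$, so $|\Upsilon_{l,k}|=\omega^{2}l^{2}+\sum_{i}\nu_{i}^{2}k_{i}^{2}-\mathfrak{m}\ge l^{2}+|k|^{2}-\mathfrak{m}\ge\tfrac{1}{2}(1+l^{2}+|k|^{2})$ as soon as $l^{2}+|k|^{2}\ge 2\mathfrak{m}+1$; in case (H1) the two non-positive summands reinforce and the analogous bound holds. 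On the remaining bounded set of $(l,k)$ the uniform bound $|\Upsilon_{l,k}|\ge\delta$ controls $|\Upsilon_{l,k}|^{-1}(1+|l|^{2}+|k|^{2})$, so altogether $C:=\sup_{l,k}|\Upsilon_{l,k}|^{-1}(1+|l|^{2}+|k|^{2})<\infty$, and the second part of Proposition~\ref{operatornorm} (with $\lambda=2$, in the $(b+d)$-variable version) yields $\|\Q^{-1}\|_{H^{\rho,r-2}_{*}\to H^{\rho,r}_{*}}\le C$.

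The step requiring care is (ii): the bound $|\Upsilon_{l,k}|\ge c_{0}(1+|l|^{2}+|k|^{2})$ really asks for quadratic growth in the $l$-direction, and for $b\ge 2$ the set $\{\langle\omega,l\rangle^{2}:l\in\ZZ^{b}\}$ is dense in $\RR_{+}$, so a large $l$ may make $\langle\omega,l\rangle^{2}$ small. Under (H2) this is excluded outright because $b=1$; under (H1) this is the point at which one exploits that the nonlinearity $\mathcal{N}$ loses derivatives only in $x$, so that it suffices for $\Q^{-1}$ to gain regularity in $x$ — which the bound $|\Upsilon_{l,k}|\ge c(1+|k|^{2})$ provides with no control of $l$. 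Beyond this bookkeeping, everything reduces to the elementary Fourier-multiplier computations already carried out for $\LL$ in Section~\ref{nonresonant}.
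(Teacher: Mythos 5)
Your overall route---observe that $\Q$ is diagonal with multipliers $\Upsilon_{l,k}$, get $\sup_{l,k}|\Upsilon_{l,k}|^{-1}\le\delta^{-1}$ from the definition of $\tilde I$ (resp.\ from the sign condition in (H1)), and get the smoothing estimate from quadratic growth of the multipliers via the $H_*^{\rho,r}$ analogue of Proposition~\ref{operatornorm}---is exactly what the paper intends (it gives no more detail than invoking Proposition~\ref{operatornorm} and the analogue of Lemma~\ref{mesI}), and your treatment of the first estimate, and of the second estimate under (H2), is in that spirit. However, two points in your argument for the second estimate do not hold up.

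First, in case (H1) your step (ii) asserts that ``the two non-positive summands reinforce and the analogous bound holds''; this is false for $b\ge 2$. One has $|\Upsilon_{l,k}|=\langle\omega,l\rangle^2+\sum_i\nu_i^2k_i^2-\mathfrak m$, which controls $1+|k|^2$ but gives no lower bound in terms of $|l|^2$, precisely because $\{\langle\omega,l\rangle^2\}_{l\in\ZZ^b}$ is dense in $\RR_+$: taking $k=0$ and $|l|\to\infty$ along a sequence with $\langle\omega,l\rangle$ bounded, $|\Upsilon_{l,0}|$ stays bounded while the weight ratio $(1+|l|^2)^2\to\infty$, so $\Q^{-1}$ does not map $H_*^{\rho,r-2}$ into $H_*^{\rho,r}$. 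Your last paragraph concedes exactly this, but then silently replaces the displayed estimate by the weaker claim that $\Q^{-1}$ gains two derivatives in $x$ only; that weaker claim is indeed what the application to $\mathcal N$ requires, but it is not the stated bound, so the (H1) half of the proposition is not proved by your argument (and, with the norms as defined, cannot be---the statement itself needs to be reformulated there). Second, the ``norm equivalence'' $(1+|l|^2)^r(1+|k|^2)^r\asymp(1+|l|^2+|k|^2)^r$ that you borrow from the remark following the definition of $H_*^{\rho,r}$ is not true: the ratio grows like $\bigl(|l|^2|k|^2/(|l|^2+|k|^2)\bigr)^r$ when $|l|\sim|k|\to\infty$, and the two weights define different spaces. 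With the product weight actually written in the definition, the gain of two derivatives would require $|\Upsilon_{l,k}|\gtrsim(1+|l|^2)(1+|k|^2)$, which fails even under (H2) along $|l|\sim|k|\to\infty$; with the isotropic weight $(1+|l|^2+|k|^2)^r$ your (H2) computation is correct. So rather than leaning on the claimed equivalence, you should fix the weight at the outset (work with $(1+|l|^2+|k|^2)^r$, or state the smoothing as a gain of two $x$-derivatives only) and prove the multiplier bound for that choice.
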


Now, we give the following result for the evolution equations:

\begin{theorem}
For fixed $\mathfrak{m}>0,$ any $0<\delta\ll1$, given $\rho\geq0, r-2>d/2,$
let $\mathbb{B}_s(0)\subset H_*^{\rho,r}$ be closed ball around the origin with the radius $s>0.$ Suppose that the parameters $\omega,\nu$ meet the hypotheses (H1) (or (H2)).

Assume that $\mathcal{N}$ defined in \eqref{nonlinearitypde} is Lipschitz from $\mathbb{B}_s(0)\subset H_*^{\rho,r}$ into $H_*^{\rho,r-2}.$

Then, there exists $\epsilon_{*}>0$ depending on $\nu,\frak{m},\delta,s,\Lip(\mathcal{N})$,
such that when $0<\epsilon<\epsilon_{*},$ for any
$(\omega,\nu)\in \mathbb{R}^{b}\times[1,2]^{d}$
(or
$(\omega, \nu)\in[1,2]^{1+d}\backslash\tilde{I}$,\, $\tilde{I}\subset[1,2]^{1+d}$ with
$\mathrm{mes}(\tilde{I})=O(\delta)$), the equation \eqref{pde} admits
a unique solution $u\in \mathbb{B}_s(0)$.
\label{illposedThm}
\end{theorem}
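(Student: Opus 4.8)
The plan is to repeat, essentially verbatim, the contraction-mapping argument of Section~\ref{nonresonant}, now with the operator $\Q$ in the role of $\LL$ and the Nemitski operator $\mathcal{N}$ of \eqref{nonlinearitypde} in the role of $\F$. The only genuinely new ingredient is a lower bound on the multipliers $\Upsilon_{l,k}$ of \eqref{newupsilon}, and that is precisely what Lemma~\ref{mes} and the Proposition stated just before the theorem supply.

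First I would record the operator bounds for $\Q^{-1}$. Under hypothesis (H1) the quantity $-\sum_{i=1}^{d}\nu_i^2 k_i^2+\mathfrak{m}$ is negative and, being evaluated on the discrete set $\mathbb{Z}^d$, is bounded away from $0$; since $-\langle\omega,l\rangle^2\le 0$ this forces $|\Upsilon_{l,k}|$ to be bounded below by a fixed positive constant uniformly in $(l,k)$, and for $|k|$ large one has in addition $|\Upsilon_{l,k}|\ge C(1+|k|^2)$ because $\nu_i\ge 1$. Under hypothesis (H2), discarding the set $\tilde{I}$ of Lemma~\ref{mes} --- whose measure is $O(\delta)$, the estimate being obtained exactly as in Lemma~\ref{mesI} with one extra variable --- yields $|\Upsilon_{l,k}|\ge\delta$ for all $(l,k)$, and again $|\Upsilon_{l,k}|\ge C(1+|k|^2)$ for large $|k|$. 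Feeding these two bounds into Proposition~\ref{operatornorm} gives $\|\Q^{-1}\|_{H_*^{\rho,r}\rightarrow H_*^{\rho,r}}\le\delta^{-1}$ and $\|\Q^{-1}\|_{H_*^{\rho,r-2}\rightarrow H_*^{\rho,r}}<C$, which is the Proposition preceding the statement.

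Next I would recast \eqref{pde} as the fixed point equation $U=\epsilon\,\Q^{-1}\mathcal{N}(U)\equiv\mathcal{T}(U)$ on the closed ball $\mathbb{B}_s(0)\subset H_*^{\rho,r}$, and set $\epsilon_*=\min\{1/(2C\,\Lip(\mathcal{N})),\ s/(2C\,\|\mathcal{N}(0)\|_{\rho,r-2})\}$. For $0<\epsilon<\epsilon_*$, the bound $\|\Q^{-1}\|_{H_*^{\rho,r-2}\rightarrow H_*^{\rho,r}}<C$ together with the Lipschitz hypothesis on $\mathcal{N}$ gives $\|\mathcal{T}(U_1)-\mathcal{T}(U_2)\|_{\rho,r}\le\epsilon\,C\,\Lip(\mathcal{N})\,\|U_1-U_2\|_{\rho,r}\le\tfrac12\|U_1-U_2\|_{\rho,r}$, so $\mathcal{T}$ is a contraction; and writing $\mathcal{T}(U)=\mathcal{T}(0)+(\mathcal{T}(U)-\mathcal{T}(0))$ gives $\|\mathcal{T}(U)\|_{\rho,r}\le\epsilon\,C\,\|\mathcal{N}(0)\|_{\rho,r-2}+\tfrac12 s\le s$ for $U\in\mathbb{B}_s(0)$, so $\mathcal{T}$ maps $\mathbb{B}_s(0)$ into itself. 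The contraction mapping principle then produces the unique $u\in\mathbb{B}_s(0)$ solving \eqref{pde}, finishing the proof.

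I do not expect a serious obstacle: the argument is word for word the one in Section~\ref{nonresonant}, the measure estimate is Lemma~\ref{mes}, and the regularity bookkeeping is the same balance of two derivatives gained versus two lost used there. The only point requiring a moment's care is checking that hypotheses (H1)/(H2) genuinely deliver the uniform lower bound and the two-derivative gain that Proposition~\ref{operatornorm} needs; in case (H2) this is exactly where the one-dimensionality of $\omega$ enters, preventing $\langle\omega,l\rangle^2$ from accumulating on the values $\sum_i\nu_i^2 k_i^2-\mathfrak{m}$ and thus keeping the resonant set $\tilde{I}$ of measure $O(\delta)$, in contrast with the genuinely dense-spectrum situation described in Section~\ref{illnonresonance} that forces the center-manifold method of Section~\ref{center}.
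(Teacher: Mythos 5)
Your proposal is exactly the argument the paper intends: the authors omit the proof, stating only that it is ``very similar to Theorem~\ref{analyticThm}'', i.e.\ the measure estimate of Lemma~\ref{mes} plus the $\Q^{-1}$ bounds from Proposition~\ref{operatornorm} feeding the same contraction-mapping scheme with the same choice of $\epsilon_*$, which is precisely what you wrote. So the proposal is correct and takes essentially the same approach as the paper.
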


The proof of Theorem \ref{illposedThm} is very similar to Theorem \ref{analyticThm},
we omit it.

\section{Time-dependent center manifold approach}\label{center}

We notice that the method in Section \ref{illposedcase} can not solve
all the cases when the parameters set leads to the center direction.
Thus, we will introduce the center manifold theorem \cite{dlLR09,  CdlLR20}
 which is a powerful tool to analyse the evolution
equation.

These results construct a finite-dimensional quasi-periodic manifold
(with boundary)
inside a function space of solutions.

This quasi-periodic manifold in function space has the property that the
PDE restricted to the manifold is equivalent to an ODE in the manifold.
Therefore, the solutions of the ODE that do not reach the boundary
the solutions of the PDE  stay in the manifold for a short time.
Hence, to analyze the behaviour of the PDE, we can study  the behavior
of the finite-dimensional system given by the motion in this
manifold. The solutions of the finite-dimensional system will correspond
to solutions of the PDE.

Similar procedures (often called also
\emph{reduction principles}) have been used in PDE, including
ill-posed PDE.  Notably, in the case of elliptic PDE in
cylindrical domains \cite{KirchgassnerS79, Mielke91}.
Once the existence of invariant manifolds is established,
one can use standard methods of finite-dimensional dynamical
systems to establish a variety of solutions \cite{PolacikV17, PolacikV20}.
The case of time-dependent manifolds, which is the most  relevant for us
was developed in \cite{CdlLR20}.
The method of \cite{CdlLR20} gives information on the center manifold
and the dynamics on it. Then, any finite result of finite-dimensional
systems that gives computable  conditions for the existence of
an interesting solution, can be adapted to the PDE. The method presented
here gives expressions for the dynamics in the manifold given the
form of the PDE.  Imposing that the dynamics in the manifold satisfies
the conditions of the constructive theorems is ensured by explicit conditions
on the PDE. We will not give explicit examples of this rather standard
but long  calculations. Some interesting examples appear in \cite{HaragusI11}
.
Note that the invariant quasi-periodic manifold will be only a finite-differentiable function in the space, even if the space itself consists of
functions of analytic functions in space and, therefore, the solutions of
the PDE are analytic in space and time. Even if each of
the solutions are analytic, the  finite differentiability refers to
the way that these solutions are stacked together.

The strategy of  \cite{CdlLR20}, which we will implement in this section,
consists in deriving
a functional equation for the representation of  a time-dependent  locally  invariant manifold
as a graph, formulate an invariance equation  and reduce it into a fixed point problem.  It is
quite remarkable that the method applies even when the equation is ill-posed. Many standard
methods in invariant manifold theory such as the graph transform do not apply.

In this section, we will consider  \eqref{evolution} with the frequency
$\omega\in \mathbb{T}^{b},$ $b\in\mathbb{Z}_{+}$.
We will allow  that the  the forcing $f$ depends on $u, D_x u $ and
present a very very explicit proof of  Theorem~\ref{CM}.

As we will see
in Remark~\ref{morederivatives}  the methods of \cite{CdlLR20} allows
to deal with forcing terms that depend on higher (fractional) derivatives
but they cannot deal with nonlinearities depending on $D^2_x u$.  Since, for
possible applications it will be important to obtain explicit formulas, we
have decided to present full details in case simpler than another
one with optimal regularity.

More precisely, we  consider
\begin{equation}
u_{tt}+\sum_{i=1}^{d}\nu_{i}^{2}\frac{\partial^{2}}{\partial x_{i}^{2}}u+\frak{m}u=\epsilon f(\omega t,x,u,D_{x}u), \,\,x\in\mathbb{T}^{d},\,\,t\in\mathbb{R}.
\label{ill1}
\end{equation}
Setting $u_{t}=v,\,\,z=(u,v)^{\top},$ \eqref{ill1}
can be rewritten as the system
\begin{equation}
\left\{
\begin{array}{l}
\dot{\theta}=\omega\\
\dot{z}=\mathcal{A}z+\epsilon \mathcal{N}(\theta,z)
\end{array},
\right.
\label{illeq}
\end{equation}
where
\begin{equation}
 \mathcal{A}=
 \left(
  \begin{array}{cc}
  0&1\\
  -\sum\limits_{i=1}^{d}\nu_{i}^{2}\frac{\partial^{2}}{\partial x_{i}^{2}}-\frak{m}&0
  \end{array}
 \right),
 \quad\quad
 \mathcal{N}=
 \left(
   \begin{array}{c}
   0\\
   f(\omega t,x,u,D_{x}u)
   \end{array}
 \right).
\end{equation}
\subsection{Choice of spaces}
To construct the center manifold, we first need to choose the suitable Banach spaces which admit cut-off functions and such that the nonlinear operator is differentiable in them. (The paper \cite{CdlLR20} uses
the two space approach of \cite{Henry81} and obtains results with weaker regularity. See Remark~\ref{morederivatives}.

For \eqref{illeq},
we consider the analytic function $u$ in $H^{\rho,r}$ which is a Hilbert space and admits a cut-off function.
Then
\[
z=(u,v)^{\top}\in X:=H^{\rho,r}\times H^{\rho,r-1}.
\]
We will assume that $f$ is analytic.
By Banach algebra and composition properties, we have that for $r>d/2+1,$
\[
f(\omega t,x,u,D_{x}u)\,:\,\mathbb{T}^{b}\times\mathbb{T}^{d}\times H^{\rho,r}\times H^{\rho,r-1}\rightarrow H^{\rho,r-1}.
\]
Thus, we have that the nonlinearity $\mathcal{N}$ is bounded from $X$ to $X$.

\subsection{Analysis of the Linear term}
To identify the basis of the stable and unstable spaces, we will analyse the linear operator $\mathcal{A}.$

Let $\Lambda:=\{+1,-1\}$, one can check that
$\Phi_{k}^{\Lambda}(x)=(e^{\mathrm{i}kx}, \lambda_{k}^{\Lambda}e^{\mathrm{i}kx})^{\top}$ is the eigenvector of $\mathcal{A}$ belonging to the eigenvalue
\[
\mathrm{Spec}(\mathcal{A})=
\{\lambda_{k}^{\Lambda}\}_{k\in\mathbb{Z}^{d}}=\left\{\Lambda\bigg(\sum_{i=1}^{d}k_{i}^{2}\nu_{i}^{2}-\frak{m}\bigg)^{1/2}\right\}_{k\in\mathbb{Z}^{d}}.
\]
Thus, we take $\{\Phi_{k}^{\Lambda}(x)\}_{k\in\mathbb{Z}^{d}}$ as a basis of the space of $X.$

\begin{remark}
The operator $\mathcal{A}$ has discrete spectrum in $X$. Furthermore, we have:

1) The center spectrum of $\mathcal{A}$ consists of a finite number of  eigenvalues, since there is only a finite number of $k$ satisfying:
\[
\sum_{i=1}^{d}k_{i}^{2}\nu_{i}^{2}-\frak{m}\leq0,\,\,k\in \mathbb{Z}^{d},
\]
for fixed $\frak{m}>0.$

2) The hyperbolic spectrum is well separated from the center spectrum.
\label{eigenvalue}
\end{remark}

Thus, we know that the spectrum of the linear operator $\mathcal{A}$ satisfies the following Proposition.
\begin{proposition}\label{A}
For fixed $\frak{m}>0,$ there exist $\beta_{1}>\beta_{3}^{-}\geq0, \beta_{2}>\beta_{3}^{+}\geq0,$ and a splitting of spectrum of linear operator $\mathcal{A}$, i.e.,
\[
\mathrm{Spec}(\mathcal{A})=\sigma_{s}\cup\sigma_{c}\cup\sigma_{u},
\]
where
\begin{equation}
\begin{split}
&\sigma_{s}=\{\lambda_{k}^{\alpha}\,|\, Re\lambda_{k}^{\alpha}<-\beta_{1},\,k\in\mathbb{Z}^{d}\},\\
&\sigma_{u}=\{\lambda_{k}^{\alpha}\,|\, Re\lambda_{k}^{\alpha}>\beta_{2},\,k\in\mathbb{Z}^{d}\},\\
&\sigma_{c}=\{\lambda_{k}^{\alpha}\,|\, \beta_{3}^{-}\leq Re\lambda_{k}^{\alpha}<\beta_{3}^{+},\,k\in\mathbb{Z}^{d}\}.
\end{split}
\end{equation}
We note that $\sigma_{c}$ contains not only the center eigenvalues but also the eigenvalues
with slow stability/unstability.

As a conclusion, there is a decomposition
\begin{equation}
X=X_{s}\oplus X_{c}\oplus X_{u},
\label{decomposition}
\end{equation}
where $X_{\widetilde{\sigma}},$ $\widetilde{\sigma}=s,c,u,$ which are invariant for $\mathcal{A},$ i.e., $\mathcal{A}(D(\mathcal{A})\cap X_{\widetilde{\sigma}})\subset X_{\widetilde{\sigma}}.$
We denote by $\Pi_{\widetilde{\sigma}}$ the projection operator over $X_{\widetilde{\sigma}},$ which is bounded in $X.$
\end{proposition}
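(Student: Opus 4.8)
The plan is to read off everything from the explicit spectrum computed just above, namely
\[
  \mathrm{Spec}(\mathcal{A}) = \left\{ \Lambda\Big(\textstyle\sum_{i=1}^d k_i^2\nu_i^2 - \mathfrak{m}\Big)^{1/2}\right\}_{k\in\mathbb{Z}^d,\ \Lambda\in\{+1,-1\}} .
\]
First I would separate the wave vectors into two classes: the finite set $\mathcal{K}_0 = \{k\in\mathbb{Z}^d : \sum_i k_i^2\nu_i^2 \le \mathfrak{m}\}$ (finite because $\nu_i \ge 1$ forces $|k|^2 \le \mathfrak{m}$), for which the eigenvalues are purely imaginary (or zero), and its complement, for which $\sum_i k_i^2\nu_i^2 - \mathfrak{m}$ is a positive real bounded below by a fixed gap $g := \min\{\sum_i k_i^2\nu_i^2 - \mathfrak{m} : k\notin\mathcal{K}_0\} > 0$ and tending to $+\infty$. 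Hence for $k\notin\mathcal{K}_0$ the two eigenvalues $\pm(\sum_i k_i^2\nu_i^2 - \mathfrak{m})^{1/2}$ are real with absolute value $\ge \sqrt{g}$. This immediately gives the trichotomy: set $\beta_3^- = 0$, choose any $\beta_3^+$ with $0 < \beta_3^+ < \sqrt{g}$, and choose $\beta_1 = \beta_2$ with $\beta_3^+ < \beta_1 < \sqrt{g}$; then $\sigma_c$ consists exactly of the eigenvalues coming from $k\in\mathcal{K}_0$ (which all have real part $0\in[\beta_3^-,\beta_3^+)$), $\sigma_u$ of the positive real eigenvalues from $k\notin\mathcal{K}_0$, and $\sigma_s$ of the negative ones. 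The "well separated" clause and the finiteness clause of Remark~\ref{eigenvalue} are then built in by construction.

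Next I would construct the spectral decomposition $X = X_s\oplus X_c\oplus X_u$. Because $\mathcal{A}$ is diagonalized by the Fourier/eigenvector basis $\{\Phi_k^\Lambda\}$ established above — with $\Phi_k^\Lambda(x) = (e^{\mathrm{i}kx},\ \lambda_k^\Lambda e^{\mathrm{i}kx})^\top$ — the three subspaces are just the closed spans of the $\Phi_k^\Lambda$ whose eigenvalue lies in $\sigma_s$, $\sigma_c$, $\sigma_u$ respectively. The projections $\Pi_{\widetilde\sigma}$ are the corresponding coordinate projections in this basis; since $X_c$ is spanned by the finitely many $\Phi_k^\Lambda$ with $k\in\mathcal{K}_0$, $\Pi_c$ has finite rank and is manifestly bounded, and $\Pi_s,\Pi_u$ are bounded because they act as $0$ or $1$ on each basis element (this is exactly the form of estimate in Proposition~\ref{operatornorm}). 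Invariance $\mathcal{A}(D(\mathcal{A})\cap X_{\widetilde\sigma})\subset X_{\widetilde\sigma}$ is immediate since $\mathcal{A}$ is diagonal in this basis. The discreteness of the spectrum in $X$ follows from the discreteness of $\{\sum_i k_i^2\nu_i^2 - \mathfrak{m}\}$ together with the fact that it tends to infinity, so each value has finite multiplicity.

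The one genuinely technical point — and the place I would be most careful — is verifying that $\{\Phi_k^\Lambda\}$ really is a (Riesz) basis of $X = H^{\rho,r}\times H^{\rho,r-1}$ rather than merely a complete orthogonal-looking family, so that the coordinate projections above are bounded operators and the direct-sum decomposition is topological; the subtlety is that the two components of $\Phi_k^\Lambda$ live in different spaces and the normalization factor $\lambda_k^\Lambda$ grows like $|k|$. For $k\notin\mathcal{K}_0$ one checks that, after the natural rescaling, $\Phi_k^{+1}$ and $\Phi_k^{-1}$ span the same two-dimensional $e^{\mathrm{i}kx}$-block of $X$ and that the change of basis from $\{(e^{\mathrm{i}kx},0),(0,e^{\mathrm{i}kx})\}$ to this pair has condition number bounded uniformly in $k$ (this uses precisely that $|\lambda_k^\Lambda|\asymp (1+|k|^2)^{1/2}$ matches the one-derivative gap between the two factors of $X$); summing the blockwise estimates gives the global Riesz-basis bound, whence boundedness of all three projections. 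Once this is in hand, the rest of the proposition is bookkeeping, and I would present it as such.
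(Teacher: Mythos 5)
Your proposal is correct, and it follows the same basic route as the paper (read the splitting off the explicit eigenvalues $\lambda_k^{\Lambda}=\Lambda(\sum_i k_i^2\nu_i^2-\mathfrak{m})^{1/2}$, then decompose $X$ along the Fourier blocks), but you supply substantially more than the paper does. The paper's proof is two sentences: the eigenvalues are discrete and tend to infinity, so appropriate $\beta_1,\beta_2,\beta_3^{\pm}$ exist, and ``the existence of the decomposition is the point of the spectral theorem.'' Your version makes the choice of constants explicit (the finite set $\mathcal{K}_0$, the gap $g$, $\beta_3^-=0<\beta_3^+<\beta_1=\beta_2<\sqrt{g}$), and, more importantly, you identify and prove the one point the paper waves at: since $\mathcal{A}$ is not self-adjoint or normal on $X=H^{\rho,r}\times H^{\rho,r-1}$, the boundedness of $\Pi_s,\Pi_u$ is not an automatic consequence of a spectral theorem; it needs the blockwise observation that the normalized eigenvectors $(1,\pm\sqrt{\mu_k})$ are uniformly transversal in the weighted norm, which works precisely because $|\lambda_k^{\Lambda}|\asymp(1+|k|^2)^{1/2}$ matches the one-derivative offset between the two factors of $X$ (and $\mu_k\gtrsim|k|^2$ for $\nu_i\ge 1$ outside a finite set of $k$). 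That is a genuine improvement in rigor over the paper's argument.

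One small caveat you should add: if some $k$ satisfies $\sum_i k_i^2\nu_i^2=\mathfrak{m}$ exactly, then $\lambda_k^{+1}=\lambda_k^{-1}=0$ and $\Phi_k^{+1}=\Phi_k^{-1}$, so the $2\times 2$ block is a Jordan block and the eigenvectors do not span it; defining $X_c$ as the closed span of eigenvectors would then fail to give $X=X_s\oplus X_c\oplus X_u$. The fix is harmless and worth one sentence: for $k\in\mathcal{K}_0$ put the entire two-dimensional Fourier block $\mathrm{span}\{(e^{\mathrm{i}kx},0),(0,e^{\mathrm{i}kx})\}$ into $X_c$ (it is finite-dimensional and $\mathcal{A}$-invariant), and reserve the eigenvector description for the hyperbolic blocks $k\notin\mathcal{K}_0$, where it is legitimate. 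With that adjustment your argument is complete.
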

\begin{proof}
We notice that the eigenvalues of $\mathcal{A}$ are discrete and $\lambda_{k}\rightarrow\infty$ when $k\rightarrow\infty.$ Therefore, we can choose appropriate $\beta_{1},\beta_{2},\beta_{3}^{+},\beta_{3}^{-},$ which can split the spectrum of $\mathcal{A}$ into $\sigma_{s},\sigma_{c},\sigma_{u},$ such that $\sigma_{s},\sigma_{c},\sigma_{u}$ are disjoint and cover all the eigenvalues. The existence of the decomposition is the point of the spectral theorem.
\end{proof}
In the dynamical systems theory, the conclusion of Proposition \ref{A} is described as $\mathcal{A}$ has a trichotomy
for the generator of the evolution. We note, however that the operator  $\mathcal{A}$ does
not generate an evolution. As we detail in Lemma~\ref{semigroups}, it generates semigroups in the future
or  in the past in subspaces.

\begin{lemma}\label{semigroups}

Denote by $\mathcal{A}_s, \mathcal{A}_u, \mathcal{A}_c$ the restrictions of $\mathcal{A}$
to $X_s, X_u, X_c$ respectively.

Then, we can define the following (semi)groups
\[
\begin{split}
& \{\mathcal{A}^{s}(t) \equiv e^{t \mathcal{A}^s}\}_{t \ge 0}  \\
& \{\mathcal{A}^{u}(t) \equiv e^{t \mathcal{A}^u}\}_{t \le 0}  \\
& \{\mathcal{A}^{c}(t) \equiv e^{t \mathcal{A}^c}\}_{t \in \mathbb{R}}  \\
\end{split}
\]
defined in the spaces $X_s,X_u, X_c$ respectively.

Moreover , the following estimates holds:
\begin{equation}
\begin{split}
&\|\mathcal{A}^{s}(t)\|_{X_s,X}\leq e^{-\beta_{1}t},\,\,t>0;\\
&\|\mathcal{A}^{u}(t)\|_{X_u,X}\leq e^{-\beta_{2}|t|},\,\,t<0;\\
&\|\mathcal{A}^{c}(t)\|_{X_c,X}\leq e^{\beta_{3}^{-}|t|},\,\,t\leq0;\\
&\|\mathcal{A}^{c}(t)\|_{X_c,X}\leq e^{\beta_{3}^{+}|t|},\,\,t\geq0.
\end{split}
\label{rate}
\end{equation}
\end{lemma}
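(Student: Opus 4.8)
The plan is to exploit the fact that $\mathcal{A}$ is diagonal in the Fourier basis $\{\Phi_k^\Lambda\}$ together with the spectral splitting already furnished by Proposition~\ref{A}. First I would record that, since $X_{\widetilde\sigma}$ is spanned by those eigenvectors $\Phi_k^\Lambda$ whose eigenvalue $\lambda_k^\Lambda$ lies in $\sigma_{\widetilde\sigma}$, the restriction $\mathcal{A}_{\widetilde\sigma}$ is itself diagonal with eigenvalues confined to the corresponding spectral piece. On $X_s$ the real parts are $<-\beta_1$, on $X_u$ they are $>\beta_2$, and on $X_c$ they lie in the bounded strip $\beta_3^-\le \mathrm{Re}\,\lambda_k^\Lambda<\beta_3^+$; in particular $\mathcal{A}_c$ is a \emph{bounded} operator on $X_c$ (finitely many eigenvalues, by Remark~\ref{eigenvalue}), so $e^{t\mathcal{A}_c}$ is defined for all $t\in\mathbb{R}$ simply by the exponential series. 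For $\mathcal{A}_s$ and $\mathcal{A}_u$ I would define the operators by their action on the Fourier basis, $e^{t\mathcal{A}_s}\Phi_k^\Lambda = e^{t\lambda_k^\Lambda}\Phi_k^\Lambda$ for $t\ge 0$, and check boundedness and the semigroup property directly.

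The key estimates then follow from Proposition~\ref{operatornorm} (the operator-norm bound for Fourier-diagonal operators) applied on each invariant subspace. For $t>0$, $e^{t\mathcal{A}_s}$ is diagonal with multipliers $e^{t\lambda_k^\Lambda}$, and $|e^{t\lambda_k^\Lambda}| = e^{t\,\mathrm{Re}\,\lambda_k^\Lambda}\le e^{-\beta_1 t}$ since $\mathrm{Re}\,\lambda_k^\Lambda<-\beta_1$ on $\sigma_s$; Proposition~\ref{operatornorm} gives $\|\mathcal{A}^s(t)\|_{X_s\to X}\le \sup_k|e^{t\lambda_k^\Lambda}|\le e^{-\beta_1 t}$. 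The bound for $\mathcal{A}^u(t)$ with $t<0$ is identical after noting $|e^{t\lambda_k^\Lambda}|=e^{-|t|\,\mathrm{Re}\,\lambda_k^\Lambda}\le e^{-\beta_2|t|}$ on $\sigma_u$. For $\mathcal{A}^c(t)$ one splits into $t\ge 0$, where $|e^{t\lambda_k^\Lambda}|\le e^{\beta_3^+ t}$, and $t\le 0$, where $|e^{t\lambda_k^\Lambda}|=e^{|t|\,\mathrm{Re}\,\lambda_k^\Lambda}\le e^{\beta_3^-|t|}$ (using $\mathrm{Re}\,\lambda_k^\Lambda\ge\beta_3^-\ge 0$); again Proposition~\ref{operatornorm} converts these pointwise multiplier bounds into operator-norm bounds $H^{\rho,r}\to H^{\rho,r}$ on each factor of $X$, hence on $X$.

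The one point that needs a little care — and which I expect to be the main (mild) obstacle — is the well-definedness of $\{\mathcal{A}^s(t)\}_{t\ge 0}$ and $\{\mathcal{A}^u(t)\}_{t\le 0}$ as strongly continuous semigroups, since these are the genuinely infinite-dimensional pieces and $\mathcal{A}$ is unbounded. Here one uses that the eigenvalues accumulate only at infinity along the real axis (the square-root structure $\lambda_k^\Lambda=\Lambda(\sum_i k_i^2\nu_i^2-\mathfrak{m})^{1/2}$ shows $\lambda_k^\Lambda\sim\pm|k|$), so on $X_s$ the real parts tend to $-\infty$ and on $X_u$ to $+\infty$; for $t\ge0$ (resp.\ $t\le0$) the multipliers $e^{t\lambda_k^\Lambda}$ are therefore uniformly bounded and decaying in $k$, so the densely-defined operator extends boundedly and the semigroup/strong-continuity axioms are verified term by term in the Fourier expansion. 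No estimate here is delicate; the content is just bookkeeping of signs and the observation that only \emph{one} time direction is available on $X_s$ and on $X_u$, which is exactly the statement that $\mathcal{A}$ does not generate a group but does generate semigroups on these subspaces. I would close by remarking that the inequalities in \eqref{rate} are not sharp constants but the exponential rates dictated by the spectral gaps $\beta_1,\beta_2,\beta_3^\pm$ of Proposition~\ref{A}, which is all that the subsequent center-manifold construction requires.
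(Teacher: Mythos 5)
Your proposal is correct and follows essentially the same strategy as the paper: exponentiate the diagonal operator eigenvalue-by-eigenvalue and bound the resulting Fourier multiplier by the spectral gaps $\beta_1,\beta_2,\beta_3^\pm$. One small imprecision worth fixing: you say Proposition~\ref{operatornorm} applies ``on each factor of $X$,'' but $\mathcal{A}$ is not blockwise diagonal on the Cartesian factors $H^{\rho,r}\times H^{\rho,r-1}$ --- it is diagonal in the eigenbasis $\{\Phi_k^\Lambda\}$, which mixes the two components; the paper handles this by observing that the $X$-norm is \emph{equivalent} to the weighted $\ell^2$ norm of the coefficients $\hat z_k^\Lambda$ in that eigenbasis and then computing directly, which is the clean way to phrase the multiplier estimate here.
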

\begin{proof}
Suppose $z\in X$ has the following Fourier expansion
\[
z=\sum_{k\in\mathbb{Z}^{d},\Lambda \in \{-1,1\} }\hat{z}_{k}^{\Lambda} \Phi_{k}^{\Lambda}(x)
\]
with norm in $X$ (which is equivalent to the norm in $H^{\rho,r}\times H^{\rho,r-1}$)
\[
\|z\|_{X}^{2}=\sum_{k\in\mathbb{Z}^{d},\Lambda \in \{-1,1\}}
 |\hat{z}_{k}^{\Lambda}|^{2}e^{2\rho|k|}(1+|k|^{2})^{r}.
\]
Then, for $z\in X^{s},$
\[
\mathcal{A}^{s}(t)z=\sum_{k\in\mathbb{Z}^{d},\Lambda \in \{-1,1\} }
e^{\lambda_{k}^{\Lambda}t}\hat{z}_{k}^{\Lambda}\Phi_{k}^{\Lambda}(x),\,\,\,t>0,
\]
we have
\begin{equation*}
\begin{split}
\|\mathcal{A}^{s}(t)z\|_{X}^{2}&=
\sum_{k\in\mathbb{Z}^{d},  \Lambda\in \{-1,1\}\atop \lambda_{k}^{\Lambda} \in\sigma_{s} }
|e^{\lambda_{k}^{\Lambda}t}\hat{z}_{k}^{\Lambda}|^{2}e^{2\rho|k|}(1+|k|^{2})^{r}\\
&\leq e^{-2\beta_{1}t}
\sum_{k\in\mathbb{Z}^{d},  \Lambda\in\{-1,1\}
	\atop \lambda_{k}^{\Lambda} \in\sigma_{s} }
|\hat{z}_{k}^{\Lambda}|^{2}e^{2\rho|k|}(1+|k|^{2})^{r}\\
&=e^{-2\beta_{1}t}\|z\|_{X}^{2}.
\end{split}
\end{equation*}
Therefore,
\[
\|\mathcal{A}^{s}(t)\|_{X_s,X}\leq e^{-\beta_{1}t},\,\,\,t>0.
\]
Similarly, the remaining three inequalities hold.
\end{proof}

\begin{remark}
Since the center space $X_{c}$ is finite-dimensional,  $X_{c}$ admits $C^{r}$ cut-off function.
\end{remark}

Since we only construct the evolutions of the equations with sufficiently small perturbations, we can not consider the equation \eqref{illeq} directly. We have to introduce the ``prepared equation'' (\cite{Lan73}).

For any $z\in X,$ $\Pi_{\sigma}z=z_{\sigma}.$ We consider the following prepared equation of \eqref{illeq}:
\begin{equation}
\begin{split}
\left\{
\begin{array}{l}
\displaystyle\frac{d\theta}{dt}=\omega\\ \\
\displaystyle\frac{dz_{s}}{dt}=\mathcal{A}_{s}z_{s}+\epsilon \mathcal{N}_{s}(\theta,x,z_{s},z_{c},z_{u})\\ \\
\displaystyle\frac{dz_{c}}{dt}=\mathcal{A}_{c}z_{c}+\epsilon \varphi(z_{c}) \mathcal{N}_{c}(\theta,x,z_{s},z_{c},z_{u})\\ \\
\displaystyle\frac{dz_{u}}{dt}=\mathcal{A}_{u}z_{u}+\epsilon \mathcal{N}_{u}(\theta,x,z_{s},z_{c},z_{u})
\end{array}
\right.
\end{split},
\label{prepared}
\end{equation}
where $\varphi\,:\, X\rightarrow\mathbb{R}$ is a $C^{r}$ cut-off function such that it is identically 1 in the ball of radius $1/2$ centered at the origin and 0 identically outside the ball of radius 1, where $C^{r}(X,Y)$ is a set of functions from $X$ to $Y$ that have continuous derivatives of order less than or equal to $r.$
Then, $\varphi(z_{c})\mathcal{N}_{c}(\theta,x,z_{s},z_{c},z_{u})$ is a uniformly $C^{r}$ function, we can also arrange that the $C^{r}$ norm of the $\epsilon$ is as small as we needed. We obtain the flow on $X_{c}$ and denote it by $J^{w}_{t}(z_{c}(0)).$
Denote by $\widetilde{\mathcal{N}}$ the nonlinearity
\[
\widetilde{\mathcal{N}}=\epsilon \bigg( \mathcal{N}_{s}(\theta,x,z_{s},z_{c},z_{u}),\,\, \varphi(z_{c}) \mathcal{N}_{c}(\theta,x,z_{s},z_{c},z_{u}),\,\, \mathcal{N}_{u}(\theta,x,z_{s},z_{c},z_{u})\bigg).
\]

Our goal is to find a function $w\,:\,\mathbb{T}^{b+d}\times X_{c}\rightarrow X_{s}\oplus X_{u},$ and verify the graph of $w$ which is denoted by
\[
\mathcal{W}=\big\{
(\Theta, w_{s}(\Theta,x,J^{w}_{t}(z_{c}(0))), J^{w}_{t}(z_{c}(0)), w_{u}(\Theta,x,J^{w}_{t}(z_{c}(0))))
\big\}
\]
is invariant under \eqref{prepared}.

In conclusion, we have checked that the system \eqref{prepared} satisfies the following hypothesis:

H1) The decomposition \eqref{decomposition} of the space $X$ is invariant under $\mathcal{A},$ $\Pi_{\widetilde{\sigma}}, \widetilde{\sigma}=s,c,u$ is bounded in $X.$

H2) The operator $\mathcal{A}$ generates semi-groups $\mathcal{A}^{s,c,u}(t),$ with the quantitative assumption \eqref{rate} on the contraction rates.

H3) The nonlinearity $\widetilde{\mathcal{N}}\,:\,\mathbb{T}^{b+d}\times X\rightarrow\mathbb{T}^{b+d}\times X$ is $C^{r}$ and $\|\widetilde{\mathcal{N}}\|_{C^{r}}$ is sufficiently small.

We will give a detailed and explicit proof  the following result, Theorem~\ref{CM} which is a
particular case of Theorem 3.1 of \cite{CdlLR20}.

To obtain Theorem~\ref{CM} from
Theorem 3.1 of  \cite{CdlLR20}, it suffices to take the two spaces $X,Y$ used in
\cite{CdlLR20} to be equal to $X$. We will present a detailed proof of
Theorem~\ref{CM}. In Remark~\ref{morederivatives} we will discuss the results
that are obtained using the full force of Theorem 3.1 of \cite{CdlLR20}.

\begin{theorem}\label{CM}
Assume in the space $X$, the linear operator $\mathcal{A}$ and the nonlinearity $\widetilde{\mathcal{N}}$ satisfy the assumptions H1),H2),H3) respectively.

Then, there exists a $C^{r-1+Lip}$ function $w\,:\,\mathbb{T}^{b+d}\times X_{c}\rightarrow X_{s}\oplus X_{u},$ and $\mathcal{W}$, the graph of $w$, is globally invariant by \eqref{prepared}. Furthermore, $\mathcal{W}$ is $C^{r-1+Lip}$ locally invariant by \eqref{illeq}.
\end{theorem}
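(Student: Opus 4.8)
The plan is to reduce Theorem~\ref{CM} to the time-dependent center manifold statement of \cite{CdlLR20} by casting the prepared equation \eqref{prepared} as a fixed point problem for the graph function $w$, exactly as in the strategy recalled in the text. First I would set up the functional-analytic framework: with the trichotomy $X = X_s \oplus X_c \oplus X_u$ from Proposition~\ref{A}, and the semigroup estimates \eqref{rate} from Lemma~\ref{semigroups}, an invariant manifold that is a graph $z_{s,u} = w(\theta, x, z_c)$ over the center variables must satisfy the invariance equation obtained by writing $z_s, z_u$ along orbits of the flow $J^w_t$ on $X_c$ using the variation-of-constants formula. Because $\mathcal{A}_s$ generates a forward semigroup and $\mathcal{A}_u$ a backward semigroup, while $\mathcal{A}_c$ generates a (slowly growing) two-sided group, one integrates the $z_s$-component forward from $-\infty$ and the $z_u$-component backward from $+\infty$; the boundary terms at infinity vanish because the spectral gaps $\beta_1, \beta_2$ strictly dominate the center growth rates $\beta_3^\pm$ and $\widetilde{\mathcal N}$ has been globally cut off (hypothesis H3). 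This yields an integral operator $\mathcal{G}$ on a space of bounded $C^{r-1+\mathrm{Lip}}$ functions $w: \mathbb{T}^{b+d} \times X_c \to X_s \oplus X_u$.

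Next I would verify that $\mathcal{G}$ is a contraction on a suitable ball in this function space. This is where the quantitative hypotheses enter decisively: the Lipschitz constant of $\mathcal{G}$ is controlled by a product of $\|\widetilde{\mathcal N}\|_{C^1}$ (small by the cut-off construction, since we arranged the $C^r$ norm of the $\epsilon$-perturbation to be as small as needed) with an integral of the form $\int_0^\infty e^{-\beta_1 t} e^{\beta_3^+ t}\,dt + \int_{-\infty}^0 e^{-\beta_2|t|} e^{\beta_3^-|t|}\,dt$, which is finite precisely because $\beta_1 > \beta_3^+$ and $\beta_2 > \beta_3^-$. Making $\epsilon$ (hence $\|\widetilde{\mathcal N}\|_{C^1}$) small forces this constant below $1$, so the contraction mapping principle in the Banach space of bounded Lipschitz (resp.\ $C^{r-1}$) graphs gives a unique fixed point $w$. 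The global invariance of $\mathcal{W} = \mathrm{graph}(w)$ under \eqref{prepared} is then immediate from the construction, since the invariance equation was built into $\mathcal{G}$.

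For the regularity claim $w \in C^{r-1+\mathrm{Lip}}$, I would run the standard fiber-contraction / formal-derivative argument: one writes down the equations that the candidate derivatives $D^j w$ (for $j \le r-1$) must satisfy — obtained by differentiating the fixed point equation — and checks that these are again contractions whose fixed points are continuous, with the top derivative $D^{r-1}w$ Lipschitz rather than $C^1$ because $\widetilde{\mathcal N}$ is only $C^r$ and one derivative is lost to the composition with the flow $J^w_t$. Finally, since $\varphi \equiv 1$ near the origin, on a neighborhood of $z_c = 0$ the prepared equation \eqref{prepared} coincides with the original \eqref{illeq}, so $\mathcal{W}$ is $C^{r-1+\mathrm{Lip}}$ locally invariant under \eqref{illeq}. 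I expect the main obstacle to be the regularity step: because the equation is ill-posed, one cannot use the graph transform or smoothing of the evolution, so the smoothness of $w$ must be extracted entirely from the contraction-on-derivatives scheme, carefully tracking the loss of one derivative and the interplay between the rates $\beta_1, \beta_2, \beta_3^\pm$ (one needs $j\beta_3^+ < \beta_1$ etc.\ for the $j$-th derivative estimate, which is why $\sigma_c$ was allowed to include slowly hyperbolic eigenvalues and the rates chosen with room to spare). All of this is carried out in \cite{CdlLR20}; here it suffices to check that our concrete $\mathcal{A}$ and $\widetilde{\mathcal N}$ meet H1)--H3), which was done above.
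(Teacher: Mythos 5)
Your proposal is correct in outline, and the core reduction coincides with the paper's: represent the manifold as a graph over $X_c$, use Duhamel/variation of constants, integrate the stable component from $-\infty$ and the unstable component from $+\infty$ (the paper's equations \eqref{s}--\eqref{u}), and close with a fixed point whose smallness comes from the cut-off nonlinearity and the gaps between $\beta_1,\beta_2$ and $\beta_3^{\pm}$; the local invariance under \eqref{illeq} follows, as you say, because \eqref{prepared} agrees with \eqref{illeq} where $\varphi\equiv 1$. Where you genuinely diverge is the regularity step. The paper follows the Lanford-type two-norm scheme of \cite{Lan73,dlLR09,CdlLR20}: propagate bounds in a ball of high regularity, prove contraction only in the $C^0$ norm by modifying one term at a time (the delicate point being the change of the flow $J^w_t$ when $w$ changes), and then obtain $C^{r-1+\Lip}$ for the fixed point because the high-regularity ball is closed under $C^0$ limits only up to one derivative plus Lipschitz. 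You instead contract in a low-regularity (Lipschitz) space and recover smoothness by a fiber-contraction/formal-derivative argument for $D^j w$, $j\le r-1$, with the top derivative Lipschitz. Both routes are standard and viable here, but two caveats: (i) your contraction estimate, stated as $\|\widetilde{\mathcal N}\|_{C^1}$ times $\int_0^\infty e^{-\beta_1 t}e^{\beta_3^+ t}\,dt+\int_{-\infty}^0 e^{-\beta_2|t|}e^{\beta_3^-|t|}\,dt$, silently absorbs the dependence of $J^{w}_t$ on $w$, which the paper singles out as the hardest estimate and which must be done via a Gronwall comparison of the two flows; (ii) the gap conditions $j\beta_3^{\pm}$ below $\beta_1,\beta_2$ for $j\le r-1$ that your derivative scheme needs are not automatic from Proposition~\ref{A} as stated, since $\sigma_c$ is allowed to contain slowly hyperbolic eigenvalues, so they must be imposed in the choice of the splitting (they hold trivially if $\sigma_c$ consists only of the purely imaginary eigenvalues). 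What the paper's route buys is that no contraction in high norms is ever needed; what yours buys is a more explicit account of where each derivative of $w$ comes from, at the price of verifying continuity and contraction of the derived (fiber) operators.
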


\begin{remark}
Note that even if $f$ is $C^{\omega}$, the cut-off is only $C^{r}.$ The center manifold obtained will be invariant for the cut-off equations but only locally invariant for the original equation.

We are going to consider the equation as an evolution in spaces of analytic functions,
so that all the solutions of the PDE we consider, will be analytic in the space variable.
As a consequence, they will be also analytic in time. Nevertheless, in spite of the fact that the
solutions are analytic, the center manifold we construct will be only $C^r$ for a finite $r$.
Note however that it is a $C^r$ manifold in a space of analytic functions (of the space variable).

Even if for every $r$, we can find a $C^r$ manifold, it may be impossible to find a $C^\infty$ manifold.
This is because to increase the $r$ we may need to have a stronger cut-off in the preparation so that
we cannot take the limit. There are well known examples of this phenomenon even in finite-dimensional,
polynomial ODE's \cite{Lan73}.
\end{remark}

\subsection{The proof of Theorem \ref{CM}}
The proof of Theorem \ref{CM} is based on the contraction principle. We consider the center direction in \eqref{prepared} and have the following evolution equation for $z_{c}$:
\begin{equation*}
\begin{split}
\left\{
\begin{array}{l}
\displaystyle\frac{d\theta}{dt}=\omega\\ \\
\displaystyle\frac{dz_{c}}{dt}=\mathcal{A}_{c}z_{c}+\widetilde{\mathcal{N}}_{c}(\theta,x,z_{s},z_{c},z_{u})
\end{array}
\right.
\end{split}
\end{equation*}
with initial value $(\theta_{0}, z_{c}(0)).$
By the Duhamel principle, we obtain the solution of this equation:
\begin{equation*}
\Theta=\theta_{0}+\omega t,
\end{equation*}
\begin{equation*}
J_{t}^{w}(z_{c}(0))=e^{\mathcal{A}_{c}t}z_{c}(0)+\int_{0}^{t}e^{\mathcal{A}_{c}(t-\tau)}\widetilde{\mathcal{N}}_{c}(\Theta(\tau), x, J_{\tau}^{w}(z_{c}(0)),w(\Theta(\tau),J_{\tau}^{w}(z_{c}(0))))d\tau.
\label{c}
\end{equation*}
Moreover, one has
\begin{equation*}
z_{s}(t)=e^{\mathcal{A}_{s}t}z_{s}(0)+\int_{0}^{t}e^{\mathcal{A}_{s}(t-\tau)}\widetilde{\mathcal{N}}_{s}(\Theta(\tau), x, J_{\tau}^{w}(z_{c}(0)),w(\Theta(\tau),J_{\tau}^{w}(z_{c}(0))))d\tau, \,\,t\geq0,
\end{equation*}
\begin{equation*}
z_{u}(t)=e^{\mathcal{A}_{u}t}z_{u}(0)+\int_{0}^{t}e^{\mathcal{A}_{u}(t-\tau)}\widetilde{\mathcal{N}}_{u}(\Theta(\tau), x, J_{\tau}^{w}(z_{c}(0)),w(\Theta(\tau),J_{\tau}^{w}(z_{c}(0))))d\tau, \,\,t\leq0.
\end{equation*}
\medskip

To verify the graph of $w$ is invariant, we denote by
\[(z_{s},z_{u})=(w_{s}(\Theta,x,J^{w}_{t}(z_{c}(0))), w_{u}(\Theta,x,J^{w}_{t}(z_{c}(0)))).\]
From Duhamel principle and $(\Theta(t), J_{t}^{w}(z_{c}(0)))$ is invertible, do
some variable transformation on $t$, we have, when $t\rightarrow \mp\infty,$
\begin{equation}
w_{s}(\theta_{0},z_{c}(0))=
\int_{-\infty}^{0}e^{-\tau\mathcal{A}_{s}}\widetilde{\mathcal{N}}_{s}(\Theta(\tau), x, J_{\tau}^{w}(z_{c}(0)),w(\Theta(\tau),J_{\tau}^{w}(z_{c}(0))))d\tau,
\label{s}
\end{equation}
\begin{equation}
w_{u}(\theta_{0},z_{c}(0))=
-\int^{\infty}_{0}e^{-\tau\mathcal{A}_{u}}\widetilde{\mathcal{N}}_{u}(\Theta(\tau), x, J_{\tau}^{w}(z_{c}(0)),w(\Theta(\tau),J_{\tau}^{w}(z_{c}(0))))d\tau.
\label{u}
\end{equation}
We denote by $\mathcal{T}_{c}, \mathcal{T}_{s}, \mathcal{T}_{u}$ the RHS of equations \eqref{c},\eqref{s},\eqref{u} respectively.
Then we obtain a fixed point equation
\[
(J_{t}^{w}(z_{c}(0)),w_{s}(\theta_{0},z_{c}(0)),w_{u}(\theta_{0},z_{c}(0)))\equiv \mathcal{T}=(\mathcal{T}_{c}, \mathcal{T}_{s}, \mathcal{T}_{u}).
\]

The rest of the work is to construct solutions of \eqref{s} and \eqref{u}.

There are a fixed point of the operators that to
$w_{s}(\theta_{0},z_{c}(0))$ and $w_{u}(\theta_{0},z_{c}(0))$
associate the RHS of \eqref{s} and \eqref{u}, respectively. The proof
of the existence of the fixed point is done in great detail in
\cite{dlLR09,CdlLR20}. Both of them are based on a method from
\cite{Lan73}. The basic idea is to show
that there is a $C^{r+1}$ ball that gets mapped onto itself
by the operator (this is obtained  using the
estimates on composition of $C^{r+1}$ functions, the estimates on
derivatives of solutions of a an ODE. and the different rates).
The second step is to prove that this operator is a contraction in
a $C^0$ norm (This is done by applying systematically adding and
subtracting so that only one term is modified at the time.
The most difficult step is estimating the change of the solutions
of the ODE when the coefficients are changed).
By studying the properties of the solution, it is also shown
that the solutions of the fixed point problem are a solution of
the original problem.

Similar equations appear in the study of center manifolds.  Note that the
nonlinear perturbations we have considered are differentiable and
that the linear parts generate reasonable evolutions. So that there
is not much difference between the finite-dimensional proofs and
the proof needed. For a treatment of a similar problem, we refer to
\cite{Mielke91}. The paper \cite{dlLR09} deals with a more general
situation.

\begin{remark}\label{morederivatives}
The proof of \cite{CdlLR20} can deal with forcing nonlinearities
that are more singular than first derivatives.
The Theorem $3.1$ of \cite{CdlLR20}
applies to problems
\[
u_{tt} = u_{xx} + \F(\omega t, u),
\]
where $F$ is a differentiable  functional from the spaces indicated.
\[
\F:\torus^d \times H^{\rho, m} \rightarrow   H^{\rho, m -2 + \delta}, \quad \delta > 0.
\]
The very interesting case $\delta = 0 $ is not covered by the results.

The method of \cite{CdlLR20} goes through equations \eqref{s}, \eqref{u}
and it also uses   the strategy of proving propagated bounds and $C^0$
contraction.    The analysis, however, is more careful and takes advantage
-- following \cite{Henry81} of the fact that the operator $\mathcal{A}^s(t)$,
$\mathcal{A}^u(t)$ are smoothing. They are bounded operators  from
  $H^{\rho,m-2+\delta} \times H(\rho,m -3+\delta) $
to   $H^{\rho,m} \times H(\rho,m -1)$ and the bounds are integrable.
\end{remark}

\appendix

\section{Some Properties of $H^{\rho,r}(\mathbb{T}_{\rho}^{d})$}
\label{sec:properties}

In this section, we collect a few
lemmas about the properties on $H^{\rho,r},$ which play a crucial role in the proof. Similar contnents have appeared in other papers.
We note that Lemma~\ref{Banachalgebra}  assumes only
$r > d/2$ whereas in previous papers it was assumed $r > d$.
This  leads to similar improvements in the previous papers
\cite{CallejaCL13, CCCdlL17,WangL20}.

\medskip
A small observation that can  be found in the previous papers is
that the $H^{\rho, r}$ norm is equivalent to the $L^2$ norm of
derivatives up to order $r$.  The derivatives can be taken
to be either real derivatives or complex derivatives. That is,
\[
  \begin{split}
    \| u \|_{\rho, r} &\approx \|u\|_{L^2(\torus^d_\rho)} +
    \|D u\|_{L^2(\torus^d_\rho)} + \cdots   \| D^r u \|_{L^2(\torus^d_\rho)} \\
    &\approx
    \|(1- \Delta)^{r/2}  u \|_{L^2(\torus^d_\rho)} \\
    &\approx\| (1- \bar{\partial}_z \cdot \partial_z)^{r/2} u \|_{L^2(\torus^d_\rho)}.
  \end{split}
\]

\medskip

The following result is elementary but crucial:
\begin{proposition}\label{Linftybounds}
  Assume that $r > d/2$, then
  \begin{equation*}\label{eq:Linftybounds}
	  \sup_{z  \in \torus^d_{\rho}} |u(z)| \le
    C_{d,r} \|u\|_{\rho,r}
  \end{equation*}
  for some constant $ C_{d,r}$ depending on $d,\,r$.
\end{proposition}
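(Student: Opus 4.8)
The plan is to argue directly from the Fourier expansion and the definition of the $H^{\rho,r}$ norm, in the same spirit as the classical Sobolev embedding $H^r(\mathbb{T}^d)\hookrightarrow C^0(\mathbb{T}^d)$ for $r>d/2$, but carrying along the analyticity weights $e^{|k|\rho}$. First I would record the elementary bound on exponentials over the complex torus: if $z\in\mathbb{T}^d_\rho$ then $z$ has imaginary part $y$ with $|y_j|\le\rho$, so
\[
|e^{\mathrm{i}k\cdot z}| = e^{-k\cdot y} \le e^{|k\cdot y|} \le e^{|k|\rho},
\qquad |k|=\sum_{i=1}^d|k_i|.
\]
Writing $u(z)=\sum_{k\in\mathbb{Z}^d}\hat u_k e^{\mathrm{i}k\cdot z}$ and estimating termwise, this gives $|u(z)|\le\sum_{k\in\mathbb{Z}^d}|\hat u_k|e^{|k|\rho}$ uniformly in $z\in\mathbb{T}^d_\rho$.

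The second step is to insert the weight $(1+|k|^2)^{r/2}$ together with its reciprocal and apply the Cauchy--Schwarz inequality over $k\in\mathbb{Z}^d$:
\[
\sum_{k\in\mathbb{Z}^d}|\hat u_k|e^{|k|\rho}
\le \Big(\sum_{k\in\mathbb{Z}^d}|\hat u_k|^2 e^{2|k|\rho}(1+|k|^2)^r\Big)^{1/2}
     \Big(\sum_{k\in\mathbb{Z}^d}(1+|k|^2)^{-r}\Big)^{1/2}
= C_{d,r}\,\|u\|_{\rho,r},
\]
where $C_{d,r}:=\big(\sum_{k\in\mathbb{Z}^d}(1+|k|^2)^{-r}\big)^{1/2}$. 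Taking the supremum over $z\in\mathbb{T}^d_\rho$ on the left then yields the claim.

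The only point that genuinely needs checking — the ``main obstacle,'' such as it is — is that $C_{d,r}$ is finite, i.e. that the series $\sum_{k\in\mathbb{Z}^d}(1+|k|^2)^{-r}$ converges. I would handle this by comparison with the integral $\int_{\mathbb{R}^d}(1+|\xi|^2)^{-r}\,d\xi$, which in polar coordinates becomes a constant times $\int_0^\infty (1+t^2)^{-r}t^{d-1}\,dt$; near infinity the integrand behaves like $t^{d-1-2r}$, so convergence holds precisely when $d-1-2r<-1$, i.e. $r>d/2$, which is exactly the hypothesis. As a byproduct, the finiteness of $\sum_k|\hat u_k|e^{|k|\rho}$ shows that the Fourier series of $u$ converges absolutely and uniformly on $\mathbb{T}^d_\rho$, so $u$ is continuous there and the supremum is well defined (in fact attained, since $\mathbb{T}^d_\rho$ is compact). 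Everything else in the argument is routine.
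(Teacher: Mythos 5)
Your proof is correct and follows essentially the same route as the paper: bound $|e^{\mathrm{i}k\cdot z}|\le e^{|k|\rho}$ on $\mathbb{T}^d_\rho$, apply the triangle inequality termwise, and then Cauchy--Schwarz with the weight $(1+|k|^2)^{r/2}$, giving $C_{d,r}=\bigl(\sum_k(1+|k|^2)^{-r}\bigr)^{1/2}$. The only difference is that you spell out the finiteness of $C_{d,r}$ for $r>d/2$ (and the uniform convergence of the Fourier series), which the paper leaves implicit; this is a welcome but minor addition.
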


  \begin{proof}
    Using triangle and Cauchy-Schwartz inequalities, we have:
\[
      \begin{split}
	      \sup_{z  \in \torus^d_{\rho}} |u(z)| &\le \sum_k
       | \hu_k| e^{\rho |k|}
        = \sum_k
        | u_k| e^{\rho |k|} (1 +|k|^2)^{r/2} (1+ |k|^2)^{-r/2} \\
        & \le \left( \sum_k | u_k|^2 e^{2 \rho |k|} \cdot
	      (1 +|k|^2)^{r} \right)^{1/2}
          \left(  \sum_k (1+ |k|^2)^{-r} \right)^{1/2} \\
         & = \|u\|_{\rho,r} C_{d,r}.
        \end{split}
\]
 \end{proof}
        Notice that this inequality is better than the Sobolev inequality
        if we considered $\torus^d_\rho$ as a $2d$ real manifold and
        $H^{\rho, r}$ as a closed space of the (real) Sobolev space
        $H^r( \torus^d_\rho)$.  Applying the real Sobolev embedding
        -- as was done in \cite{CallejaCL13} --- requires
        $r > d$.

        The reason is that, even if $\torus^d_\rho$ is a
        $2d$ dimensional real manifold, due to the maximum principle
        for analytic functions, the sizes of the functions in $H^{\rho,r}$
        are controlled by the $H^r$ norm to the restriction to
        of the functions to the $d$ dimensional manifolds given
        by $\mathrm{Im}(z_i) = \pm \rho$.  There are $2^d $ components
        each of which is a real $d$ dimensional torus.

        As we will see immediately, similar results appear in
        the Banach algebra properties.

      Note that we also get improved Sobolev embedding
      theorems. If $ r = d/2 + \lambda$,
      we obtain
      \begin{equation}\label{embedding}
        \| u \|_{C^\lambda(\torus^d_\rho)} \le C \|u\|_{\rho, r}.
      \end{equation}
      Of course, in \eqref{embedding}, the regularity in the interior is
      not an issue (the functions are analytic) but we obtain quantitative
      bounds.

\begin{lemma}Banach algebra properties:
\label{Banachalgebra}

\rm{(1)} Sobolev case: Let $\rho=0, r>d/2.$ Then there exists a positive constant $C_{r,d}$ depending on $r,\,d$, so that for any $u_{1},\,u_{2}\in H^{r}(\mathbb{T}^{d},\mathbb{R}),$ the product $u_{1}\cdot u_{2}$ is in $H^{r}(\mathbb{T}^{d},\mathbb{R}),$ and
\[
\|u_{1}\cdot u_{2}\|_{r}\leq C_{r,d}\|u_{1}\|_{r}\|u_{2}\|_{r}.
\]

\rm{(2)} Analytic case: Let $\rho>0, r>d/2.$ Then there exists a positive constant $C_{\rho,r,d}$ depending on $\rho, r, d,$ so that for any $u_{1}, u_{2}\in H^{\rho,r}(\mathbb{T}_{\rho}^{d},\mathbb{C}),$ the product $u_{1}\cdot u_{2}$ is in $H^{\rho,r}(\mathbb{T}^{d}_{\rho},\mathbb{C}),$ and
$$
\|u_{1}\cdot u_{2}\|_{\rho,r}\leq C_{\rho,r,d}\|u_{1}\|_{\rho,r}\|u_{2}\|_{\rho,r}.
$$
\end{lemma}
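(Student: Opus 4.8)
The plan is to prove both cases (1) and (2) simultaneously by arguing entirely on the Fourier side, since the norm is by definition the weighted $\ell^2$ norm $\|u\|_{\rho,r}^2=\sum_k|\hat u_k|^2w(k)^2$ with $w(k):=e^{\rho|k|}(1+|k|^2)^{r/2}$, and the Sobolev case is just $\rho=0$. First I would check the product is well defined with the expected Fourier coefficients: when $r>d/2$, the computation in the proof of Proposition~\ref{Linftybounds} gives $\sum_k|\hat u_k|e^{\rho|k|}\le C_{d,r}\|u\|_{\rho,r}<\infty$, so the Fourier series of $u_1,u_2$ converge absolutely and uniformly on $\mathbb{T}^d_\rho$, their product is again analytic and bounded on $\mathbb{T}^d_\rho$, and $\widehat{u_1u_2}_k=\sum_{j\in\mathbb{Z}^d}(\widehat{u_1})_{k-j}(\widehat{u_2})_j$ with absolutely convergent series; reality in case (1) is automatic. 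It then remains to estimate $\|u_1u_2\|_{\rho,r}$.

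The heart of the matter is a convolution estimate. Using $e^{\rho|k|}\le e^{\rho|k-j|}e^{\rho|j|}$ together with the elementary Peetre-type inequality $(1+|k|^2)^{r/2}\le 2^r\big((1+|k-j|^2)^{r/2}+(1+|j|^2)^{r/2}\big)$, which follows from $|k|\le|k-j|+|j|$, I would bound
\[
w(k)\,|\widehat{u_1u_2}_k|\ \le\ 2^r\big((A\ast c)_k+(a\ast B)_k\big),
\]
where $A_m:=w(m)|(\widehat{u_1})_m|$, $B_m:=w(m)|(\widehat{u_2})_m|$, $a_m:=e^{\rho|m|}|(\widehat{u_1})_m|$, $c_m:=e^{\rho|m|}|(\widehat{u_2})_m|$, so that $\|A\|_{\ell^2}=\|u_1\|_{\rho,r}$ and $\|B\|_{\ell^2}=\|u_2\|_{\rho,r}$. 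By Young's inequality $\|A\ast c\|_{\ell^2}\le\|A\|_{\ell^2}\|c\|_{\ell^1}$, and the decisive point is
\[
\|c\|_{\ell^1}=\sum_m w(m)|(\widehat{u_2})_m|\,(1+|m|^2)^{-r/2}\le\|u_2\|_{\rho,r}\Big(\sum_{m\in\mathbb{Z}^d}(1+|m|^2)^{-r}\Big)^{1/2}=C_{d,r}\,\|u_2\|_{\rho,r}
\]
by Cauchy--Schwarz, the last sum being finite \emph{exactly} because $r>d/2$ --- the same arithmetic that underlies Proposition~\ref{Linftybounds}. Symmetrically $\|a\ast B\|_{\ell^2}\le C_{d,r}\|u_1\|_{\rho,r}\|u_2\|_{\rho,r}$, and taking $\ell^2$ norms in $k$ and combining yields $\|u_1u_2\|_{\rho,r}\le 2^{r+1}C_{d,r}\|u_1\|_{\rho,r}\|u_2\|_{\rho,r}$, which is the claim with $C_{r,d}$ (case $\rho=0$) and $C_{\rho,r,d}$ (case $\rho>0$); in fact the bound is uniform in $\rho\ge0$.

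The only subtle point --- and the reason the references \cite{CallejaCL13,CCCdlL17,WangL20} required $r>d$ --- is the choice to split the Sobolev weight $(1+|k|^2)^{r/2}$ as a \emph{sum} of the two factors rather than as the product $(1+|k-j|^2)^{r/2}(1+|j|^2)^{r/2}$: the product splitting would leave one needing $\ell^1\ast\ell^1\to\ell^1$ or $\ell^2\ast\ell^2\to\ell^2$, neither of which holds, whereas the sum splitting lets one factor keep the full weight (so it lands in $\ell^2$) while the other needs only $\ell^1$ summability, costing precisely $(\sum_m(1+|m|^2)^{-r})^{1/2}$ and hence only $r>d/2$. An alternative would be a Moser estimate $\|u_1u_2\|_{\rho,r}\lesssim\|u_1\|_{L^\infty(\mathbb{T}^d_\rho)}\|u_2\|_{\rho,r}+\|u_1\|_{\rho,r}\|u_2\|_{L^\infty(\mathbb{T}^d_\rho)}$ fed by the improved sup bound of Proposition~\ref{Linftybounds}, but the direct Fourier argument avoids $L^\infty$ estimates on the complex torus and treats both cases at once, so I would present it that way; everything besides the sum-splitting observation (Young, Cauchy--Schwarz, absolute convergence) is routine and I would not belabour the constants.
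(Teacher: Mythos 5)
Your proof is correct and takes a genuinely different route from the paper's. The paper's Part (2) reduces the analytic case to the real Sobolev Banach algebra of Part (1): it observes that $\|u\|_{\rho,r}$ is equivalent to $\big(\sum_{\varsigma\in S}\|u\|_{\varsigma,\rho,r}^2\big)^{1/2}$ over the $2^d$ sign patterns $\varsigma\in\{\pm1\}^d$, where each $\|\cdot\|_{\varsigma,\rho,r}$ is the $H^r$ norm of the trace $\Gamma_\varsigma u(a)=u(a+\mathrm{i}\varsigma\rho)$ on the real $d$-torus at the corners of the polyannulus; since $\Gamma_\varsigma$ is multiplicative, the algebra property transfers componentwise and one finishes with Cauchy--Schwarz over $\varsigma$. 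This is precisely what buys $r>d/2$ instead of $r>d$: the relevant Sobolev manifold is a $d$-dimensional torus, not $\torus^d_\rho$ viewed as a $2d$-dimensional real manifold. You instead work entirely on the Fourier side via the Peetre-type inequality $(1+|k|^2)^{r/2}\le 2^r\big((1+|k-j|^2)^{r/2}+(1+|j|^2)^{r/2}\big)$, Young's inequality $\ell^2\ast\ell^1\to\ell^2$, and Cauchy--Schwarz to pay $\big(\sum_m(1+|m|^2)^{-r}\big)^{1/2}<\infty$ when $r>d/2$. Your route is more self-contained --- it proves both parts at once (including the classical Sobolev case, which the paper cites) and gives a constant uniform in $\rho\ge0$ --- while the paper's route exploits the complex structure of $\torus^d_\rho$ and isolates the conceptual point that the effective Sobolev dimension is $d$, not $2d$. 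One small correction to your closing remark: the paper's stated reason that the earlier references needed $r>d$ is the $2d$-versus-$d$ issue in the Sobolev embedding applied to $\torus^d_\rho$ as a real manifold, not a product-versus-sum splitting of the Peetre weight; but this speculative aside does not affect the validity of your argument.
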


\begin{proof}
Part (1) is the classic result, see \cite{Tay11b,AF03}. We prove the part (2).

Denote the set $S:=\{\varsigma\,|\,\varsigma=\{1,-1\}^{d}\},$ where $\varsigma$ is a $d$-dimensional vector, and the $i$-th component is $1$ or $-1$, $i=1,\cdots,d.$
We choose each component of $-\varsigma^{*}$ has the same sign with $k$, then
\[
e^{-2k\varsigma\rho}\leq e^{2|k|\rho}=e^{-2k\varsigma^{*}\rho}\leq\sum_{\varsigma\in S}e^{-2k\varsigma \rho},\,\,\,for\,\,k\in\mathbb{Z}^{d}.
\]
For any Fourier coefficient $\hu_{k}$ of $u(x)$, we have
\[
|\hu_{k}|^{2}e^{-2k\varsigma\rho}
\leq|\hu_{k}|^{2}e^{2|k|\rho}
\leq\sum_{\varsigma\in S}|\hu_{k}|^{2}e^{-2k\varsigma\rho}.
\]
Multiplying the three sides of the above inequality by $(1+|k|^{2})^{r}$ and summing in $k$, we have
\begin{equation*}
\begin{split}
\sum_{k\in\mathbb{Z}^{d}}|\hu_{k}|^{2}e^{-2k\varsigma\rho}(1+|k|^{2})^{r}
&\leq\sum_{k\in\mathbb{Z}^{d}}|\hu_{k}|^{2}e^{2|k|\rho}(1+|k|^{2})^{r}\\
&\leq\sum_{\varsigma\in S}\sum_{k\in\mathbb{Z}^{d}}|\hu_{k}|^{2}e^{-2k\varsigma\rho}(1+|k|^{2})^{r}.
\end{split}
\label{equiv}
\end{equation*}
If we define the norm $\|\cdot\|_{\varsigma,\rho,r}$ as:
\begin{equation*}
\|u\|_{\varsigma,\rho,r}^{2}:=\sum_{k\in\mathbb{Z}^{d}}|\hu_{k}|^{2}e^{-2k\varsigma\rho}(1+|k|^{2})^{r},
\end{equation*}
we obtain that the norm $\|u\|_{\rho,r}$ is equivalent to $(\sum_{\varsigma\in S}\|u\|_{\varsigma,\rho,r}^{2})^{1/2}.$

First, we verify that $\|\cdot\|_{\varsigma,\rho,r}$ is a Banach algebra in the $d$-dimensional manifold:
$\{\theta\,|\,\mathrm{Re}(\theta)\in\mathbb{T}^{d}, \,\,\mathrm{Im}(\theta)=\varsigma\rho\}.$
Denote $a=\mathrm{Re}(\theta),$ and the function $\Gamma_{\varsigma}u\,:\,\mathbb{T}^{d}\rightarrow \mathbb{C},$
\[
(\Gamma_{\varsigma}u)(a)=u(a+\mathrm{i}\varsigma\rho).
\]
One has $\Gamma_{\varsigma}(u_{1}u_{2})(a)
=[\Gamma_{\varsigma}(u_{1})\Gamma_{\varsigma}(u_{2})](a).$
Thus, when $r>d/2,$ we have
\begin{equation*}
\|u_{1}u_{2}\|_{\varsigma,\rho,r}
=\|\Gamma_{\varsigma}(u_{1}u_{2})\|_{r}
\leq\|\Gamma_{\varsigma}(u_{1})\|_{r}\|\Gamma_{\varsigma}(u_{2})\|_{r}
=\|u_{1}\|_{\varsigma,\rho,r}\|u_{2}\|_{\varsigma,\rho,r}.
\end{equation*}

Then,
\begin{equation*}
\begin{split}
\bigg(\sum_{\varsigma\in S}\|u_{1}u_{2}\|_{\varsigma,\rho,r}^{2}\bigg)^{\frac{1}{2}}
&\leq\bigg(\sum_{\varsigma\in S}(\|u_{1}\|_{\varsigma,\rho,r}^{2}\cdot\|u_{2}\|_{\varsigma,\rho,r}^{2})\bigg)^{\frac{1}{2}}\\
&\leq\bigg(\sum_{\varsigma\in S}\|u_{1}\|_{\varsigma,\rho,r}^{2}\cdot\sum_{\varsigma\in S}\|u_{2}\|_{\varsigma,\rho,r}^{2}\bigg)^{\frac{1}{2}}\\
&\leq\bigg(\sum_{\varsigma\in S}\|u_{1}\|_{\varsigma,\rho,r}^{2}\bigg)^{\frac{1}{2}}\cdot
\bigg(\sum_{\varsigma\in S}\|u_{2}\|_{\varsigma,\rho,r}^{2}\bigg)^{\frac{1}{2}}.
\end{split}
\end{equation*}
That is to say, when $r>d/2,$ $(\sum_{\varsigma\in S}\|u\|_{\varsigma,\rho,r})^{1/2}$ is a Banach algebra.
By the equivalence of norms, we have completed the proof.
\end{proof}

Note that having Proposition~\ref{Linftybounds}, we could
have followed also the standard proof using the Leibnitz formula.

\medskip

For the purposes of this paper, the main issue is the study of
the operator given by composition on the left.

Many other composition properties
can be found in \cite{CCCdlL17} the Proposition 3.9 in \cite{Tay11b} for details. For more results, one can also refer to \cite{AZ90,IKT13,Mar74,RS96}.

\begin{lemma}\label{compos1} Composition properties:

(1) Sobolev case: Let $f\in C^{r}(\mathbb{R}^{n},\mathbb{R}^{n})$ and assume that $f(0)=0.$ Then, for $u\in H^{r}(\mathbb{T}^{d},\mathbb{R}^{n})\bigcap L^{\infty}(\mathbb{T}^{d},\mathbb{R}^{n}),$ we have
\[
\|f(u)\|_{r}\leq C_{r}(\|u\|_{L^{\infty}})(1+\|u\|_{r}),
\]
where $C_{r}:=C_{r}(\eta)=\sup_{|x|\leq\eta,\,\alpha\leq r}|D^{\alpha}f(x)|.$
Particularly, when $r>d/2,$ if $f\in C^{r+2}$ and $u,v,u+v\in H^{r},$ then
\begin{equation}
\begin{split}
\|f\circ(u+v)-f\circ u-Df\circ u\cdot v\|_{r}\leq
C_{r,d}(\|u\|_{L^{\infty}})(1+\|u\|_{r})\|f\|_{C^{r+2}}\|v\|_{r}^{2},
\end{split}
\label{composf}
\end{equation}
for some $C_{r,d}>0$ depending on the norm of $u.$

(2) Analytic case: Let $f:\,B\rightarrow \mathbb{C}^{n}$ with $B$ being an open ball around the origin in $\mathbb{C}^{n}$ and assume that $f$ is analytic in $B.$ Then, for $u\in H^{\rho,r}(\mathbb{T}^{d}_{\rho},\mathbb{C}^{n})\bigcap L^{\infty}(\mathbb{T}_{\rho}^{d},\mathbb{C}^{n})$ with $u(\mathbb{T}^{d}_{\rho})\subset B,$ we have
\begin{equation}\label{composition1}
\|f(u)\|_{\rho,r}\leq C_{\rho,r}(\|u\|_{L^{\infty}})(1+\|u\|_{\rho,r}).
\end{equation}
In the case of $r>d/2,$ we have
\begin{equation} \label{derivative}
\begin{split}
\|f\circ(u+v)-f\circ u-Df\circ u\cdot v\|_{\rho,r}\leq C_{\rho,r,d}(\|u\|_{L^{\infty}})(1+\|u\|_{\rho,r})\|v\|_{\rho,r}^{2}.
\end{split}
\end{equation}

As a corollary of \eqref{derivative}, we obtain that, under the hypotheses
of the Lemma, the operator $u\rightarrow f\circ u$ is differentiable.

Since the Hilbert space $H^{\rho, r}$ is a complex space, and the differentiability is in the complex sense, we conclude that the operator $u\rightarrow f(u)$ is
analytic.
\end{lemma}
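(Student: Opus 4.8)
The plan is to deduce everything from the classical real Sobolev composition estimates, which constitute part (1) and are recorded in \cite{Tay11b,AF03}; I will take part (1) as known and concentrate on the analytic case (2) and its corollary. The key device is the one already used in the proof of Lemma~\ref{Banachalgebra}: for $\varsigma\in S=\{1,-1\}^d$ write $\Gamma_\varsigma u(a)=u(a+\mathrm{i}\varsigma\rho)$ for the restriction of $u$ to the real $d$-torus $\{\mathrm{Im}(z)=\varsigma\rho\}$. Its $k$-th Fourier coefficient is $\hat u_k e^{-k\cdot\varsigma\rho}$, so $\|\Gamma_\varsigma u\|_{H^r(\torus^d)}=\|u\|_{\varsigma,\rho,r}$, and the $H^{\rho,r}$ norm is equivalent to $\bigl(\sum_{\varsigma\in S}\|\Gamma_\varsigma u\|_{H^r(\torus^d)}^2\bigr)^{1/2}$. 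Since left composition commutes with restriction, $\Gamma_\varsigma(f\circ u)=f\circ\Gamma_\varsigma u$, and likewise $\Gamma_\varsigma(Df\circ u)=Df\circ\Gamma_\varsigma u$ and $\Gamma_\varsigma(g\cdot h)=(\Gamma_\varsigma g)(\Gamma_\varsigma h)$. Thus estimating $f(u)$ in $H^{\rho,r}$ reduces to estimating each $f(\Gamma_\varsigma u)$ in $H^r(\torus^d,\CC^n)$ and summing over the $2^d$ slices.

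To prove \eqref{composition1}, I would first reduce to $f(0)=0$ by subtracting the constant $f(0)$, whose $H^{\rho,r}$ norm is bounded. Since $u(\torus^d_\rho)\subset B$ and $\torus^d_\rho$ is compact, $\overline{u(\torus^d_\rho)}$ is a compact subset of the open ball $B$; using a smooth cut-off I would extend $f$ to a $C^\infty$ map on $\CC^n\cong\RR^{2n}$ agreeing with $f$ near $\overline{u(\torus^d_\rho)}$, all of whose $C^m$ norms are controlled by $\sup_B$-type data and by the distance from $u(\torus^d_\rho)$ to $\partial B$ — this is precisely where the dependence on $\|u\|_{L^\infty}$ enters. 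Applying the classical estimate of part (1) on each slice, using $\|\Gamma_\varsigma u\|_{L^\infty}\le\|u\|_{L^\infty(\torus^d_\rho)}$ and $\|\Gamma_\varsigma u\|_{H^r}\le\|u\|_{\rho,r}$, and summing over $\varsigma$ gives \eqref{composition1}.

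The tame second-order estimate \eqref{derivative} follows in exactly the same way from \eqref{composf} (available since an analytic $f$ is in particular $C^{r+2}$): apply \eqref{composf} to $\Gamma_\varsigma u$, $\Gamma_\varsigma v$, use the commutation identities to recognise the left side as $\Gamma_\varsigma$ of $f\circ(u+v)-f\circ u-Df\circ u\cdot v$, bound the slice $L^\infty$ and $H^r$ norms by the full $\torus^d_\rho$ norms, and sum over $\varsigma$; the only bookkeeping point is that $\bigl(\sum_\varsigma\|\Gamma_\varsigma v\|_{H^r}^4\bigr)^{1/2}\le\sum_\varsigma\|\Gamma_\varsigma v\|_{H^r}^2\le C\|v\|_{\rho,r}^2$, which is harmless. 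For the corollary, \eqref{derivative} says the remainder of $w\mapsto f\circ w$ at $u$ in direction $v$ is $O(\|v\|_{\rho,r}^2)$, so $v\mapsto(Df\circ u)\cdot v$ is the Fréchet derivative once one knows it is a bounded operator on $H^{\rho,r}$: since $Df$ is again analytic, \eqref{composition1} applied to $Df$ gives $Df\circ u\in H^{\rho,r}$, and for $r>d/2$ multiplication by a fixed $H^{\rho,r}$ function is bounded on $H^{\rho,r}$ by Lemma~\ref{Banachalgebra}. The derivative is complex-linear because $Df$ is the complex derivative of the holomorphic $f$ and pointwise multiplication is $\CC$-bilinear, and $H^{\rho,r}$ is a complex Banach space; a continuously Fréchet differentiable map between complex Banach spaces with complex-linear derivative is holomorphic in the sense of \cite[Chapter III]{HilleP57}, which yields the stated analyticity.

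The step I expect to require the most care is the reduction of the analytic, vector-valued composition to the classical scalar real-variable estimates: one must handle that $f$ is defined only on the ball $B$ (hence the controlled cut-off extension), that it is $\CC^n$- rather than $\RR$-valued, and that the slice restrictions $\Gamma_\varsigma u$ genuinely lie in $H^r(\torus^d)$ with the stated norm identity — these are exactly the points that force the constants to depend on $\|u\|_{L^\infty}$ in the way claimed, and everything else is the routine passage through the finite sum over $\varsigma\in S$.
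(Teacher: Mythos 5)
Your argument is correct, but it takes a genuinely different route from the paper. The paper treats the analytic case intrinsically on $\torus^d_\rho$: it notes that boundedness of $u$ plus the chain rule makes $f\circ u$ complex differentiable with $D(f\circ u)=Df\circ u\,Du$, controls $\|f\circ u\|_{\rho,1}$ by computing $\int_{\torus^d_\rho}|Df\circ u\,Du|^2$, reaches general $r$ via the Faa-di-Bruno formula together with the equivalence of $\|\cdot\|_{\rho,r}$ with $L^2$ norms of derivatives on $\torus^d_\rho$, and obtains the quadratic estimate \eqref{derivative} from the integral (fundamental-theorem-of-calculus) form of the Taylor remainder combined with the Banach algebra property, before invoking \cite[Chapter III]{HilleP57} for analyticity. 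You instead reduce the analytic case wholesale to the classical Sobolev estimates of part (1) by restricting to the $2^d$ boundary tori $\{\mathrm{Im}(z)=\varsigma\rho\}$, using the norm identity $\|\Gamma_\varsigma u\|_{H^r}=\|u\|_{\varsigma,\rho,r}$ and the equivalence $\|u\|_{\rho,r}\approx(\sum_\varsigma\|u\|^2_{\varsigma,\rho,r})^{1/2}$ — exactly the device the paper itself uses to prove Lemma~\ref{Banachalgebra} — together with the fact that left composition commutes with restriction. Your route buys a proof with no new hard analysis (no Faa-di-Bruno, no direct holomorphic calculus on the thickened torus), at the modest cost of a cut-off extension of $f$ from the ball $B$ to all of $\CC^n\cong\RR^{2n}$ and of checking the boundary-value/Fourier identification for $f\circ u$; the paper's route avoids the extension and stays within holomorphic calculus, but is stated more briefly. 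Your treatment of the corollary (boundedness of $v\mapsto (Df\circ u)\cdot v$ via \eqref{composition1} for $Df$ plus Lemma~\ref{Banachalgebra}, complex linearity, then Hille--Phillips) is if anything more explicit than the paper's. The only implicit assumption you share with the paper is that $u+v$ (and the intermediate points) stay in the region where $f$, respectively your extension of $f$, is the given analytic map; this is the same "$\xi$ is in the domain of $f$" hypothesis the paper uses, so it is not a gap relative to the paper's own level of rigor.
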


\begin{proof}

The finite-differentiable case of
Lemma~\ref{compos1}  is a well known consequence
 of Gagliardo-Nirenberg-Moser composition estimates, for specific proof see Proposition 3.9 in \cite{Tay11b}.

 Here we give the proof of \eqref{composition1} and  \eqref{composf}.
 We notice that if $u$ is bounded (in particular if $r > d/2$ by
 Lemma~\ref{Linftybounds}) and that the range of $u$ is in the domain
 of $f$, we have that, by the chain rule, $f\circ u$ is complex
 differentiable in $\torus^d_\rho$ and that $f\circ u, (Df)\circ u$
 are bounded.

 Since $D(f\circ u) = Df\circ u Du$,
 we obtain, computing $\int_{T^d_\rho} |Df\circ u  Du|^2 $,
 that $f\circ u \in H^{\rho, 1}$ if
 $u \in H^{\rho,1}$.  To get the result for arbitrary $r$, we
 can use the Faa-Di-Bruno formula for higher derivatives and use
 the bounds we already have from the previous stages.

Let $u, v\in H^{r},$ $\xi=u+\zeta v$ for some $\zeta\in [0,1],$ $\xi$ is in the domain of $f.$
By the fundamental theorem of calculus, we have
\begin{equation*}
\begin{split}
f(u+v)&=f(u)+\int_{0}^{1}Df(\xi)\cdot v d\zeta\\
&=f(u)+Df(u)\cdot v+\int_{0}^{1}\int_{0}^{\zeta}D^{2}f(u+\zeta tv)v^{2}dtd\zeta.
\end{split}
\end{equation*}

The fact that
the differentiable functions of a complex Banach space are
analytic is proved in \cite[Chapter III]{HilleP57}.

\end{proof}

\bibliographystyle{alpha}
\bibliography{FPT}

\end{document}